\newtheorem{theorem}{Theorem}
\newtheorem{corollary}[theorem]{Corollary}
\newtheorem{lemma}[theorem]{Lemma}
\newtheorem{remark}[theorem]{Remark}
\newenvironment{proof}[1][Proof]{\noindent\textbf{#1.} }{\ \rule{0.5em}{0.5em}}
\def\NN{\mathbb{N}}
\def\paral{\|}
\def\be{\begin{equation}}
\def\ee{\end{equation}}
\let\ds\displaystyle
\let\eps\varepsilon
\begin{document}

\title{Numerical analysis of an asymptotic-preserving scheme for anisotropic
elliptic equations}
\author{Alexei Lozinski\footnotemark[4] \and Jacek Narski\footnotemark[2]
\and Claudia Negulescu\footnotemark[2] }
\maketitle


\renewcommand{\thefootnote}{\fnsymbol{footnote}}

\footnotetext[2]{%
Institut de Math\'ematiques de Toulouse, UMR 5219 , Universit\'e de
Toulouse, CNRS, UPS/IMT, 118 route de Narbonne, F-31062 Toulouse, France} 
\footnotetext[4]{%
Universit\'e de Franche-Comt\'e, Laboratoire de Math\'ematiques, 16 route de
Gray, 25030 Besan\c{c}on, France} 

\renewcommand{\thefootnote}{\arabic{footnote}}


\abstract{The main purpose of the present paper is to study from a
  numerical analysis point of view some robust methods designed to
  cope with stiff (highly anisotropic) elliptic problems. The
  so-called asymptotic-preserving schemes studied in this paper are
  very efficient in dealing with a wide range of $\eps$-values, where
  $0 < \eps \ll 1$ is the stiffness parameter, responsible for the
  high anisotropy of the problem. In particular, these schemes are
  even able to capture the macroscopic properties of the system, as
  $\eps$ tends towards zero, while the discretization parameters
  remain fixed. The objective of this work shall be to prove some
  $\eps$-independent convergence results for these numerical schemes
  and put hence some more rigor in the construction of such
  AP-methods.}
  
\medskip

{\bf Keywords}: Anisotropic elliptic problem, Asymptotic-Preserving scheme, Numerical analysis, Saddle-point problem, Inf-sup condition, Stabilization, Convergence.

\section{Introduction}\label{SEC1} 
In a series of previous works \cite{DDLNN,DDN,DLNN,LNN,NarskiOttaviani} some efficient numerical
schemes were introduced in the aim to solve at a moderate computational cost
some highly anisotropic elliptic and parabolic problems. The interest in solving such
problems comes for example from their regular occurrence in the modeling of
magnetically confined plasmas \cite{Chen,Hazel} and ionospheric plasmas  \cite{Nagy}, where the strong magnetic field creates
anisotropy. An accurate and not resource demanding description of tokamak plasma
dynamics is crucial for succeeding in the construction of a thermonuclear
fusion reactor, producing clean energy for the future. 

The problems cited above involve a small 
parameter $0 < \eps \ll 1$ measuring the anisotropy ratio in the diffusion matrix. This feature makes their numerical treatment rather involved, since the problems degenerate 
in the limit $\eps \rightarrow 0$ leading to a
break-down of traditional schemes for $\eps$ very small. This is caused both by the huge, $\eps$-dependent condition number of the discretized problem and by locking phenomena (the strong diffusion along a magnetic field line
makes the solution to be almost constant along these lines, which is incompatible with an approximation by piecewise polynomials unless the computational mesh is well aligned with the field). In the
previous works some efficient so-called \textit{asymptotic-preserving}
schemes were proposed and were shown to be able to cope with the
deficiencies of traditional schemes. Their basic idea is to mimic on the
discrete level the asymptotic behaviour of the continuous solution $u^\eps$  in the limit $\eps\to 0$, thus making the diagram in Fig.~\ref{diag1} commutative.
\begin{figure}[!ht]
\centering
\psfrag{T1}[][][1.]{$P^{\eps,h}$} \psfrag{T2}[][][1.]{$P^{\eps}$} %
\psfrag{T4}[][][1.]{$P^{0,h}$} \psfrag{T3}[][][1.]{$P^{0}$} %
\psfrag{L1}[][][1.]{$\eps\rightarrow 0$} 
\psfrag{H1}[][][1.]{$h\rightarrow 0$} 
\includegraphics[width=0.4\textwidth]{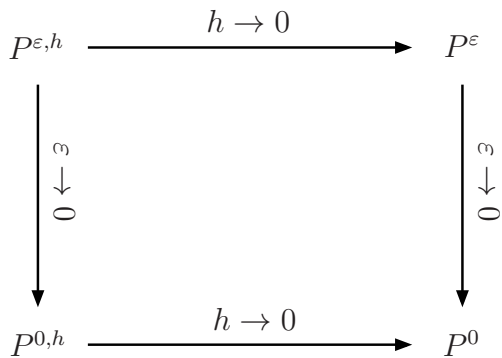}
\caption{Properties of AP-schemes}
\label{diag1}
\end{figure} 

In the present paper, we restrict out attention to the case of elliptic linear problems and are interested in two Asymptotic-Preserving schemes 
proposed in \cite{DLNN} and \cite{NarskiOttaviani}. The efficiency and advantages of the different schemes was put into evidence numerically.
However, the rigorous numerical analysis of these schemes is still lacking
and is the subject of the present paper. The trick that makes these schemes work is the introduction of an auxiliary variable $q^\eps$ which serves as a Lagrange multiplier in the limit $\eps\to 0$ 
corresponding to the constraint on $u^\eps$, which results from the degeneracy of the governing equations. We are thus in the realm of mixed problems, their penalized variants and the discretizations thereof, as in \cite{Brezzi,Girault}. 
One cannot adapt though directly the techniques from these books to the present case as the inf-sup conditions are not satisfied on the discrete level, when one discretizes with standard finite elements as in the above cited 
papers. In fact, the choice of appropriate functional spaces even on the continuous level is not straightforward. We are going here to propose an adequate functional setting with the inf-sup condition being introduced in 
a non standard way. We shall develop then a complete analysis of the finite element schemes with $\eps$-independent constants in  the error estimates, relying on some discrete inf-sup conditions in $h$-dependent norms.

This paper is organized as follows. In Section \ref{SEC2}, we
introduce the anisotropic elliptic problem, which is the starting
point of this work. A first Asymptotic-Preserving scheme for this
problem, slightly modifying that proposed in  \cite{DLNN} and well adapted for open field-line configurations, is then
presented and analyzed in detail. Section \ref{secStab} is concerned
with the introduction of a different Asymptotic-Preserving
reformulation of the same anisotropic elliptic problem, being able to
cope even with closed field-line configurations, which are often
encountered in tokamak plasma modeling. This leads to the scheme proposed in \cite{NarskiOttaviani}.  A detailed numerical analysis
of this scheme is then carried on. In Section \ref{sec:num_tests} we validate numerically the error estimates obtained. Finally, some technical lemmas are
postponed to the Appendices A and B.


\section{An AP-scheme for open field-line configurations}

\label{SEC2} 
Before presenting our model problem, let us first define some important
quantities. Let $b$ be a smooth field in a domain $\Omega \subset \mathbb{R}%
^d$, with $d=2,3$, and let us decompose the regular boundary $\Gamma
=\partial \Omega $ into three components following the sign of the
intersection with $b$: 
\begin{equation*}
\Gamma _{D}:= \{ x \in \Gamma \,\, / \,\, b(x) \cdot n(x) =0 \}\,, \quad
\Gamma_N:=\Gamma _{in} \cup \Gamma _{out}= \{ x \in \Gamma \,\, / \,\, b(x) \cdot n(x) \lessgtr 0  \}\, .
\end{equation*}
The vector $n$ is here the unit outward normal to $\Gamma $.

The direction of the anisotropy of our problem is defined by this
vector field $b \in (C^{\infty}(\Omega))^d$, which is supposed to
satisfy $|b(x)|=1$ for all $x \in \Omega$. Given this vector field
$b$, one can decompose now vectors $v \in \mathbb{R}^d$, gradients
$\nabla \phi$, with $\phi(x)$ a scalar function, and divergences
$\nabla \cdot v$, with $v(x)$ a vector field, into a part parallel to
the anisotropy direction and a part perpendicular to it. These parts
are defined as follows:
\begin{equation}
\begin{array}{llll}
\ds v_{\|}:= (v \cdot b)\, b \,, & \ds v_{\perp}:= (Id- b \otimes b) v\,, & 
\text{such that} & \ds v=v_{\|}+v_{\perp}\,, \\[3mm] 
\ds \nabla_{\|} \phi:= (b \cdot \nabla \phi)\, b \,, & \ds \nabla_{\perp}
\phi:= (Id- b \otimes b)\, \nabla \phi\,, & \text{such that} & \ds \nabla
\phi=\nabla_{\|}\phi+\nabla_{\perp}\phi\,, \\[3mm] 
\ds \nabla_{\|} \cdot v:= \nabla \cdot v_{\|} \,, & \ds \nabla_{\perp} \cdot
v:= \nabla \cdot v_{\perp}\,, & \text{such that} & \ds \nabla \cdot
v=\nabla_{\|}\cdot v+\nabla_{\perp}\cdot v\,.%
\end{array}%
\end{equation}
Given these notations we can now introduce the highly anisotropic elliptic
problem we are interested in, namely
\begin{gather}
\left\{ 
\begin{array}{ll}
-{\frac{1 }{\eps}} \nabla_\parallel \cdot \left(A_\parallel \nabla_\parallel
u^{\eps }\right) - \nabla_\perp \cdot \left(A_\perp \nabla_\perp u^{\eps %
}\right) = f & \text{ in } \Omega, \\[3mm] 
{\frac{1 }{\eps}} n_\parallel \cdot \left( A_\parallel \nabla_\parallel u^{%
\eps } \right) + n_\perp \cdot \left(A_\perp \nabla_\perp u^{\eps }\right) =
0 & \text{ on } \Gamma_N, \\[3mm] 
u^{\eps }= 0 & \text{ on } \Gamma _{D}\,.%
\end{array}
\right.  \label{P}
\end{gather}
The parameter $0<\eps \ll1$ is very small, inducing rather sever numerical
difficulties, when solving (\ref{P}) via standard methods. Indeed,
this elliptic system becomes degenerate in the limit $\eps \rightarrow 0$,
leading to the reduced problem 
\begin{gather}
(R)\,\,\, \left\{ 
\begin{array}{ll}
-\nabla_\parallel \cdot \left(A_\parallel \nabla_\parallel u \right) = 0 & 
\text{ in } \Omega, \\[3mm] 
n_\parallel \cdot \left( A_\parallel \nabla_\parallel u \right) = 0 & \text{
on } \Gamma_N, \\[3mm] 
u= 0 & \text{ on } \Gamma _{D}\,,%
\end{array}
\right.  \label{R}
\end{gather}
which has an infinite amount of solutions, all of them being constant along
the field lines. Numerically this degeneracy translates in a very
ill-conditioned linear system to be solved when $0 < \eps \ll 1$.\\
The aim of the present section will be the mathematical study of the elliptic problem (\ref{P}), in
particular the investigation of its asymptotic behaviour as $\eps$ tends towards zero, the introduction of an \textit{Asymptotic-Preserving} reformulation, better suited to pass to the limit $\eps \rightarrow 0$, and the detailed numerical analysis of the designed AP-scheme.
The reformulation of the singularly-perturbed problem (\ref{P}) is based on asymptotic arguments and is a sort of ``reorganization'' of the problem into a form, which allows for an automatic numerical transition from (\ref{P}) towards the limit-model (to be determined) as $\eps \rightarrow 0$, while keeping the discretization parameters fixed.

\subsection{Inflow Asymptotic-Preserving reformulation}
\label{SEC21} 

In order to avoid all the above mentioned difficulties corresponding to the non-uniqueness
of the reduced problem $(R)$, one has to pick up within all its solutions
the right limit solution, by fixing in an adequate manner  its value on the field lines. This was done in the previous works \cite{DDLNN,DDN,DLNN}, via the introduction of Lagrange multipliers, which are necessary
to recover the uniqueness in the limit $\eps \rightarrow 0$. The numerical resolution of the thus obtained Asymptotic-Preserving reformulations was shown to be stable and
accurate independently on the parameter $\eps$, which is a great advantage
as compared to standard discretizations for (\ref{P}). This essential
property of the designed AP-scheme was proved numerically,
its rigorous numerical analysis being the subject of the present paper.%
\newline

For the mathematical study, let us assume that the diffusion coefficients
and the source term satisfy the following hypothesis:\\

\noindent \textbf{Hypothesis A} \label{hypo} \textit{\ Let $f\in
H^{-1}(\Omega )$, $0<\eps<1$ be a fixed arbitrary parameter and $\overset{%
\circ }{\Gamma _{D}}\neq \varnothing $. The diffusion coefficients $%
A_{\|}\in W^{2,\infty }(\Omega )$ and $A_{\perp }\in \mathbb{M}_{d\times
d}(W^{2,\infty }(\Omega ))$ are supposed to verify the bounds 
\begin{gather}
0<A_{0}\leq A_{\|}(x)\leq A_{1}\,,\quad \text{for a.a.}\,\,\,x\in \Omega ,
\label{hyp1} \\[3mm]
A_{0}\|v\|^{2}\leq v^{t}A_{\perp }(x)v\leq A_{1}\|v\|^{2}\,,\quad \forall
v\in \mathbb{R}^{d}\,\,\,\text{with}\,\,\,v\cdot b(x)=0\,\,\,\text{for a.a.}%
\,\,\,x\in \Omega ,  \label{hyp2}
\end{gather}%
with some constants $0<A_{0}\leq A_{1}$. }\newline

Before we shall pass to a brief presentation of an AP-reformulation of (\ref%
{P}), we shall rewrite this problem in a slightly different form, masking the
perpendicular derivatives, which turn out to be cumbersome
for the numerical analysis. Indeed, the following reformulation, called in
the following $(P)^\eps$-problem 
\begin{equation}
(P)^\eps \,\,\, \left\{ 
\begin{array}{ll}
-{\frac{1-\eps }{\eps}}\nabla _{\parallel }\cdot \left( A_{\parallel }\nabla
_{\parallel }\,u^{\eps}\right) -\nabla \cdot \left( A\nabla u^{\eps}\right) =f
& \text{ in }\Omega , \\[3mm] 
{\frac{1-\eps }{\eps}}n_{\parallel }\cdot \left( A_{\parallel }\nabla
_{\parallel }\,u^{\eps}\right) +n\cdot \left( A\nabla u^{\eps}\right) =0 & 
\text{ on }\Gamma _N, \\[3mm] 
u^{\eps}=0 & \text{ on }\Gamma _{D}\,.%
\end{array}%
\right.  \label{PP}
\end{equation}%
is easily seen to be equivalent to problem (\ref{P}) by setting 
\begin{equation*}
A:=(b\otimes b)\, A_{\|}\, (b\otimes b)+(Id-b\otimes b)\,A_{\perp }\,(Id-b\otimes
b).
\end{equation*}%
Remark that by Hypothesis A, we have immediately $A_{0}\|v\|^{2}\leq
v^{t}A(x)v\leq A_{1}\|v\|^{2}\, $ for all $v\in \mathbb{R}^{d}\,\,\,$ and
for a.a. $x\in \Omega $, which is a sort of coercivity and boundedness
property for the diffusivity matrix $A$.\\

Let us now introduce the mathematical framework and define the Hilbert space 
$\mathcal{V}$ as follows 
\begin{equation}
\mathcal{V}:=\{v\in H^{1}(\Omega )\,,\text{ such that }v|_{\Gamma _{D}}=0\},
\label{V}
\end{equation}%
equipped with the scalar product 
\begin{equation}
(u,v)_{\mathcal{V}}=a(u,v):=\ds\int_{\Omega }A\nabla u\cdot \nabla v\,dx\,.
\label{bil}
\end{equation}
In the following, the bracket $(\cdot ,\cdot )$ will stand for the standard $%
L^{2}$-scalar product. We shall also frequently use the bilinear form $%
a_{\|}:\mathcal{V}\times \mathcal{V}\rightarrow \mathbb{R}$ and the
corresponding semi-norm $|\cdot |_{\|}:\mathcal{V}\rightarrow \mathbb{R}$
defined by 
\begin{equation}
a_{\|}(u,v):=\int_{\Omega }A_{\|}\nabla _{\|}u\cdot \nabla _{\|}v\,dx\,,\quad
|u|_{\|}:=\sqrt{a_{\|}(u,u)} \,.  \label{bilpar}
\end{equation}%
The weak formulation of problem (\ref{PP}) can be now written as: Find $u^{%
\eps}\in \mathcal{V}$ such that 
\begin{equation}
(P)^{\eps}\,\,\,\quad \frac{1-\eps }{\eps}a_{\|}(u^{\eps},v)+a(u^{\eps%
},v)=(f,v)\,,\quad \forall v\in \mathcal{V}\,.  \label{PPP}
\end{equation}%
Thanks to Hypothesis A and to Lax-Milgram theorem, problem (\ref{PPP})
admits a unique solution $u^{\eps}\in \mathcal{V}$ for all $\eps>0$.\\

The design of efficient schemes, which are uniformly stable
along the transition $\eps\rightarrow 0$, is based on the fundamental fact,
that the solutions $u^{\eps}\in \mathcal{V}$ of (\ref{PPP}) tend for $\eps%
\rightarrow 0$ towards some function $u^{0}$, constant along the field lines
of $b$, i.e. belonging to the following Hilbert-space 
\begin{equation}
\mathcal{G}=\{v\in \mathcal{V}\,,\text{ such that }\nabla _{\|}v=0\}\,, \quad (u,v)_{\mathcal{G}}:=(\nabla_\perp u,\nabla_\perp v)_{L^2(\Omega)}\,,
\label{G}
\end{equation}%
which consists of functions belonging to $\mathcal{V}$ with zero gradient along the field lines. Taking the test functions in (\ref{PPP}) from $\mathcal{G}$, and passing formally to the limit $\eps \rightarrow 0$, permits to identify the
problem satisfied by $u^{0}\in \mathcal{G}$, the so-called Limit model 
\begin{equation}
(L)\,\,\,\,\, a(u^{0},v)=(f,v)\,,\quad \forall v\in \mathcal{G}\,.
\label{LL}
\end{equation}%
Again, the Lax-Milgram theorem permits to show the existence and uniqueness
of a solution $u^{0}\in \mathcal{G}$ of this Limit problem (\ref{LL}). Remark that this Limit model is defined on a constrained space $\mathcal{G}$, and shall be equivalently reformulated in the sequel, on a constraint-less space.\\

The main idea behind the first AP-reformulation of problem \eqref{PP} is to rescale the parallel derivative of $u^{%
\eps}$ by introducing the auxiliary variable $q^{\eps}$ such that 
\begin{equation}
\nabla _{\Vert }q^{\eps}=\frac{1}{\eps}\nabla _{\Vert }u^{\eps}.
\label{qdef}
\end{equation}%
To ensure the uniqueness of $q^{\eps}$, we require in this section that $q^{%
\eps}=0$ on $\Gamma _{in}$. 
Remark that this is only possible if all the field lines are open and enter the domain (by $\Gamma_{in}$). For closed field lines, completely contained in $\Omega$, fixing $q^\eps$ on $\Gamma_{in}$ would be not enough for the uniqueness and other methods shall be developed in Section \ref{secStab}.\\
We thus introduce the Hilbert space 
\begin{equation}
\mathcal{L}_{in}=\{q\in L^{2}(\Omega )~/~\nabla _{\Vert }q\in L^{2}(\Omega )%
\text{ and }q|_{\Gamma _{in}}=0\}\,,  \label{GL}
\end{equation}%
equipped with the scalar product $a_{\Vert }(\cdot ,\cdot )$, inducing the
norm $|\cdot |_{\Vert }$. Note that this is indeed a norm on $\mathcal{L}%
_{in}$ since $|q|_{_{\Vert }}=0$ for $q\in \mathcal{L}_{in}$ means $\nabla
_{\Vert }q=0$ on $\Omega \,$,$\ $which in combination with the boundary
condition on $\Gamma _{in}$ implies $q=0.$

Substituting the definition (\ref{qdef}) of $q^{\eps}$ into (\ref{PPP}) yields the following problem, called in the following Inflow Asymptotic-Preserving reformulation of $(P)^{\eps}$: Find $(u^{\eps},q^{\eps})\in \mathcal{V}\times \mathcal{L}_{in}$ satisfying
\begin{equation}
(AP_{in})^{\eps}\,\,\,\left\{ 
\begin{array}{ll}
\ds a(u^{\eps},v)+(1-\eps )a_{\|}(q^{\eps},v)=(f,v), & \quad \forall v\in 
\mathcal{V} \\[3mm] 
\ds a_{\|}(u^{\eps},w)-\eps a_{\|}(q^{\eps},w)=0, & \quad \forall w\in 
\mathcal{L}_{in}\,.%
\end{array}%
\right.  \label{Pa}
\end{equation}%
The notation $(AP_{in})^{\eps}$ emphasizes the fact that we are introducing
an Asymptotic-Preserving reformulation of (\ref{PPP}), based on the Lagrange
multiplier $q^\eps$, which is uniquely determined through the inflow boundary condition on $%
\Gamma_{in}$. The reformulation \eqref{Pa} is completely equivalent to the starting model \eqref{PPP}. However, remark that putting formally $\eps=0$ in (\ref{Pa}) and
introducing for some technical reasons explained in the next subsection a
larger space $\tilde{\mathcal{L}}_{in}\supset{\mathcal{L}}_{in}$ leads to
the well-posed problem : Find $(u^{0},q^{0})\in \mathcal{V}\times \tilde{\mathcal{L}}%
_{in}$ such that 
\begin{equation}
(L_{in})\,\,\,\left\{ 
\begin{array}{ll}
\ds a(u^{0},v)+a_{\|}(q^{0},v)=(f,v), & \quad \forall v\in \mathcal{V} \\%
[3mm] 
\ds a_{\|}(u^{0},w)=0, & \quad \forall w\in \tilde{\mathcal{L}}_{in}\,,%
\end{array}%
\right.  \label{L}
\end{equation}%
which is an equivalent (saddle-point) reformulation of the Limit-problem (%
\ref{LL}). Indeed, instead of setting the problem on the constrained space $%
\mathcal{G}$, one introduces a Lagrange multiplier $q^{0}\in \tilde{\mathcal{%
L}}_{in}$, enabling us to solve the problem on the constraint-free space $%
\mathcal{V}\times \tilde{\mathcal{L}}_{in}$. 

\subsection{The Inf-Sup condition} \label{SEC22} 
Let us now focus on the mathematical study of the continuous problem $(AP_{in})^{\eps}$ and its asymptotic behaviour as $\eps$ tends towards zero. Due to the saddle-point structure of this problem, we shall make use of the traditional inf-sup theory \cite{Brezzi,ern}. The goal is to prove an $\eps$-independent inf-sup condition
corresponding to (\ref{Pa}), which ensures the
existence and uniqueness of a solution, as well as the convergence of the
AP-solution $(u^{\eps},q^{\eps})$ towards the L-solution $(u^{0},q^{0})$ as $%
\eps\rightarrow 0$. For this, we shall need a more adequate norm on the
space $\mathcal{L}_{in}$, in contrast to the one proposed in \cite{DLNN}
(see (\ref{GL})).\newline

Indeed, we would like that the form $a_{\|}(\cdot,\cdot)$ satisfies an
inf-sup estimate on the pair of spaces $\mathcal{V}$ plus the space of
functions $q$. This would be trivially the case, if we would search for $q$
in the space $\tilde{\mathcal{L}}_{in}$, defined as the closure of $\mathcal{%
L}_{in}$ in the following norm $|q|_*$ 
\begin{equation}  \label{S_norm}
|q|_*:=\sup_{v\in \mathcal{V}}\frac{a_{\|}(q,v)}{|v|_{\mathcal{V}}} \,.
\end{equation}%
Note that for all $q \in \mathcal{L}_{in}$, one has $|q|_{*}\leq |q|_{\|} $,
which means that the injection $\mathcal{L}_{in}\subset \tilde{\mathcal{L}}%
_{in}$ is continuous, however $\mathcal{L}_{in}\not=\tilde{\mathcal{L}}_{in}$
in general, as can be seen from the subsequent remarks.

\begin{remark}
\label{remal0} One can extend the continuous bilinear form to $a_{\|}: 
\tilde{\mathcal{L}}_{in} \times \mathcal{V} \rightarrow \mathbb{R}$ by
defining for each $q \in \tilde{\mathcal{L}}_{in} \backslash \mathcal{L}%
_{in} $ 
\begin{equation*}
a_{\|}(q,v):= \lim_{n \rightarrow \infty} a_{\|}(q_n,v)\,, \quad \forall v
\in \mathcal{V}\,,
\end{equation*}
for some $\{q_n\}_{n \in \mathbb{N}} \subset \mathcal{L}_{in}$ such that $%
q_n \rightarrow_{n \rightarrow \infty } q$ in $\tilde{\mathcal{L}}_{in}$.\\
Indeed, the sequence $\{q_n\}_{n \in \mathbb{N}}$ being a Cauchy-sequence in $\tilde{\mathcal{L}}_{in}$, one deduces immediately that $\{a_{\|}(q_n,v) \}_{n \in \mathbb{N}}$ is also a Cauchy-sequence for each fixed $v \in \mathcal{V}$, being hence convergent.
\end{remark}

\begin{remark}
\label{remal1} For any fixed $q \in \tilde{\mathcal{L}}_{in}$, the maximum
of $\frac{a_{\|}(q,v)}{|v|_{\mathcal{V}}}$ over $v\in \mathcal{V}$ is
attained with the function $v^{\ast }\in \mathcal{V}$, which is solution to the problem
\begin{equation}  \label{eq1}
(v^{\ast},w)_{\mathcal{V}}=a_{\|}(q,w)\quad \forall w\in \mathcal{V}\,.
\end{equation}%
Indeed, let us fix $q \in \tilde{\mathcal{L}}_{in}$ and $v^* \in \mathcal{V}$
be the corresponding solution to (\ref{eq1}). Then, any $v\in \mathcal{V}$
can be decomposed as $v=\alpha v^{\ast }+v^{\prime }$ with $\alpha:=\frac{%
(v,v^*)_{\mathcal{V}}}{|v^*|_{\mathcal{V}}^2}$ and $v^{\prime }\in \mathcal{V}$ verifying $%
(v^{\ast },v^{\prime })_{\mathcal{V}}=0$. We observe then that 
\begin{equation*}
\frac{a_{\|}(q,v)}{|v|_{\mathcal{V}}}=\frac{\alpha |v^{\ast }|_{\mathcal{V}%
}^{2}}{\sqrt{\alpha ^{2}|v^{\ast }|_{\mathcal{V}}^{2}+|v^{\prime }|_{%
\mathcal{V}}^{2}}}\leq |v^{\ast }|_{\mathcal{V}}=\frac{a_{\|}(q,v^{\ast })}{%
|v^{\ast }|_{\mathcal{V}}}\,.
\end{equation*}
\end{remark}

\begin{remark}
\label{remal2} In order to prove that $\mathcal{L}_{in}\not=\tilde{\mathcal{L%
}}_{in}$ it suffices to verify that the norms in $\mathcal{L}_{in}$ and $%
\tilde{\mathcal{L}}_{in}$ are not equivalent, \textit{i.e.} that there is no
constant $c>0$, such that $|q|_{\|} \le c |q|_*$ for all $q \in \mathcal{L}%
_{in}$. For this, it suffices to construct a sequence $\{ q_k \}_{k \in 
\mathbb{N}} \subset \mathcal{L}_{in}$, such that 
\begin{equation}  \label{infsup-bad}
\sup_{ v\in{\mathcal{V}}} \frac{a_{\|}(q_k,v)}{|q_k|_{\|}|v|_{\mathcal{V}}}
\le \frac ck \,\,\, \Rightarrow \,\,\,|q_k|_*\leq {\frac{ c }{k}}
|q_k|_{\|} \,\,\, \Rightarrow \,\,\,|q_k|_{\|}\geq {\frac{ k }{c}}
|q_k|_{*}\,, \quad \forall k \in \NN,
\end{equation}%
with $c>0$ a constant. One can easily do it in the following simple setting: let $\Omega =(0,\pi
)\times (0,\pi )$, $A_{\|}=1$, $A_{\perp}=Id$, $b=e_{2}$. Taking $q_k=\sin kx(\cos y-\cos
2y)$, which is a function in $\mathcal{L}_{in}$ for any integer $k>0$, we
can find the solution $v^{\ast}_k$ to problem (\ref{eq1}) corresponding to $%
q=q_k $ as 
\begin{equation*}
v^{\ast}_k=\sin kx\,(\frac{1}{k^{2}+1}\cos y-\frac{4}{k^{2}+4}\cos 2y).
\end{equation*}%
Now, in view of Remark \ref{remal1} 
\begin{equation*}
\sup_{v\in \mathcal{V}}\frac{a_{\|}(q_k,v)}{|q_k|_{\|}|v|_{\mathcal{V}}}=%
\frac{a_{\|}(q_k,v^{\ast}_k)}{|q_k|_{\|}|v^{\ast }|_{\mathcal{V}}}=\frac{%
|v^{\ast}_k|_{\mathcal{V}}}{|q_k|_{\|}}=\frac{1}{\sqrt{5}}\sqrt{\frac{1}{%
k^{2}+1}+\frac{16}{k^{2}+4}}.
\end{equation*}%
This gives an example of (\ref{infsup-bad}).
\end{remark}

Searching now for a solution $(u,q)$ belonging to $\mathcal{V}\times \tilde{\mathcal{L}}%
_{in}$ is the proper setting for our problem in the limit case $\eps =0 $.
Indeed, in this particular case, we have to cope with a standard saddle
point problem $(L_{in})$ and the inf-sup condition is satisfied in the space $\mathcal{V%
}\times \tilde{\mathcal{L}}_{in}$. However, this choice does not work any more for $%
\eps >0$, as  the term $a_{\|}(q,w)$ makes no more sense if we suppose
only $(q,w)\in \tilde{\mathcal{L}}_{in} \times \tilde{\mathcal{L}}_{in}$.
Hence, we propose to work for $\eps >0$ in the previous space $\mathcal{L}%
_{in}$ for the Lagrange multiplier $q$, associated however with the
following slightly different norm 
\begin{equation}  \label{norm1}
|q|_{\eps}:=(|q|^2_*+\eps |q|^2_{\|})^{\frac 12} \,, \quad \forall q \in 
\mathcal{L}_{in}\,,
\end{equation}%
which is equivalent to the old norm $| \cdot |_{\|}$ of $\mathcal{L}%
_{in} $ with $\eps $-dependent equivalence constants exploding as $\eps\to 0$~:  
\begin{equation*}
|q|_{\eps}\leq \sqrt{1+\eps } |q|_{\|}\text{ and }|q|_{\|}\leq \frac{1}{%
\sqrt{\eps }}|q|_{\eps} \,.
\end{equation*}%
The space $(\mathcal{L}_{in},|\cdot|_{\eps})$ is a Hilbert one
equipped with the scalar product 
\begin{equation*}
(q_{1},q_{2})_{\eps}:=(v_{1}^{\ast },v_{2}^{\ast })_{\mathcal{V}}+\eps %
a_{\|}(q_{1},q_{2})\,, \quad \forall q_1, \, q_2 \in \mathcal{L}_{in}\,,
\end{equation*}
where $v_{1}^{\ast },v_{2}^{\ast }$ are the unique solutions of problem (%
\ref{eq1}). In the limit $\eps \rightarrow 0$, this space transforms into
the Hilbert space $(\tilde{\mathcal{L}}_{in},|\cdot|_*)$ with the scalar product $(q_{1},q_{2})_*:=(v_{1}^{\ast
},v_{2}^{\ast })_{\mathcal{V}}$.\newline

We are finally able to introduce the right mathematical setting for a rigorous
study of the AP-problem (\ref{Pa}) and its convergence towards the Limit-problem (\ref{L}). The
Hilbert space adapted to our problem is 
\begin{equation*}
\mathcal{X}_{\eps }:=\left\{ 
\begin{array}{ccc}
\mathcal{V}\times \mathcal{L}_{in} & \text{for} & \eps >0 \\[2mm] 
\mathcal{V}\times \tilde{\mathcal{L}}_{in} & \text{for} & \eps =0\,,%
\end{array}%
\right. \quad \,\quad \|u,q\|_{\mathcal{X}_{\eps }}:=(|u|_{\mathcal{V}%
}^{2}+|q|_*^2+\eps|q|_{\|}^{2})^{1/2}\,,
\end{equation*}%
and the problem we are interested in, can now be simply written as: Find for each $\eps \in [0,1]$ the solution $(u^{\eps},q^{\eps}) \in \mathcal{X}_{\eps }$ to
\begin{equation}
(AP_{in})^{\eps}\,\,\,\left\{ 
\begin{array}{ll}
\ds a(u^{\eps},v)+(1-\eps )a_{\|}(q^{\eps},v)=(f,v), &  \\[3mm] 
\ds a_{\|}(u^{\eps},w)-\eps a_{\|}(q^{\eps},w)=0, & %
\end{array}\,, \quad \forall (v,w) \in  \mathcal{X}_{\eps }\,.
\right.  \label{Pa_bis}
\end{equation}%
For the further developments, we shall also introduce the coupled bilinear form $C_{\eps }:{\mathcal{X}_{%
\eps }}\times {\mathcal{X}_{\eps }}\rightarrow \mathbb{R}$ defined as
\begin{equation}
C_{\eps }((u,q),(v,w)):=a(u,v)+(1-\eps )a_{\|}(q,v)+a_{\|}(u,w)-\eps %
a_{\|}(q,w)\,.  \label{Ceps}
\end{equation}%
This bilinear form $C_{\eps }$ is uniformly continuous in $\eps
\in \lbrack 0,1]$, \textit{i.e.} 
\begin{equation}
C_{\eps }((u,q),(v,w))\leq 2\|u,q\|_{{\mathcal{X}_{\eps }}}\|v,w\|_{{\mathcal{X}%
_{\eps }}}\,,\quad \forall (u,q),(v,w)\in {\mathcal{X}_{\eps }}\,,
\label{cont_C}
\end{equation}%
as, using Cauchy-Schwarz inequality, one has
\begin{eqnarray*}
C_{\eps }((u,q),(v,w)) &\leq & |u|_{\mathcal{V}}|v|_{\mathcal{V}}+|q|_*|v|_{%
\mathcal{V}}+|u|_{\mathcal{V}}|w|_*+\eps |q|_{\|}|w|_{\|} \\
&\leq & \left( 2|u|_{\mathcal{V}}^{2}+|q|_*^{2}+\eps |q|_{\|}^{2}\right) ^{%
\frac{1}{2}} \left( 2|v|_{\mathcal{V}}^{2}+|w|_*^{2}+\eps %
|w|_{\|}^{2}\right) ^{\frac{1}{2}}\,.
\end{eqnarray*}%
The form $C_{\eps }$ enjoys furthermore the inf-sup property%
\begin{equation}
\inf_{(u,q)\in \mathcal{X}{_{\eps }}}\sup_{(v,w)\in \mathcal{X}{_{\eps }}}%
\frac{C_{\eps }((u,q),(v,w))}{\|u,q\|_{\mathcal{X}{_{\eps }}}\|v,w\|_{%
\mathcal{X}{_{\eps }}}}\geq \beta \,,  \label{infsup-C}
\end{equation}%
with a constant $\beta >0$ that does not depend on $\eps $. This is
established in the following lemma which is recast to a slightly more
general and abstract setting. Our particular result is recovered from this
lemma setting the bilinear forms $a(\cdot ,\cdot )$, $b(\cdot ,\cdot )$ and $%
c(\cdot ,\cdot )$ from the lemma to, respectively, $a(\cdot ,\cdot )$, $%
a_{\|}(\cdot ,\cdot )$ and $a_{\|}(\cdot ,\cdot ).$ Note that this result is very close to those from Section 4.3 of \cite{Girault} but we do not require here 
an inf-sup condition for the form $b$ in $V\times L$.

\begin{lemma}
\label{infsup} \textbf{(Inf-Sup condition)} Let $V$ and $L$ be Hilbert
spaces with their respective scalar products $a(\cdot ,\cdot )$ and $c(\cdot
,\cdot )$ inducing the norms $\|\cdot \|_{V}$ and $\|\cdot \|_{L}$. Let
moreover $\tilde{L}\supset L$ be another Hilbert space with the norm $%
\|\cdot \|_{\tilde{L}}$ such that $\|q\|_{\tilde{L}}\leq \|q\|_L$ for all $q\in L$. 
Let $b:\tilde{L}\times V\rightarrow \mathbb{R}$
be a bilinear form satisfying the continuity relation $\|b(q,v)\|\leq \|v\|_{V}\|q\|_{\tilde{L}}$
for all $v\in V$, $q\in L$ and 
\begin{equation}\label{infsup_b}
\inf_{q\in \tilde{L}}\sup_{v\in V}\frac{b(q,v)}{\|q\|_{\tilde{L}}\|v\|_{V}}%
=\alpha >0.
\end{equation}%

Set $L_{\eps }:=L$ for any $\eps >0$, $L_{0}:=\tilde{L}$ and let $X_{\eps }$
for any $\eps \geq 0$ denote the Hilbert space $V\times L_\eps$ equipped
with the norm $\|u,q\|_{X_{\eps }}:=(\|u\|_{V}^{2}+\|q\|_{\tilde{L}}^{2}+%
\eps \|q\|_{L}^{2})^{1/2}$. Introduce for any $\eps \in [0,1]$ the bilinear
form $C_{\eps }:X_{\eps}\times X_{\eps}\rightarrow \mathbb{R}$%
\begin{equation*}
C_{\eps }((u,q),(v,w))=a(u,v)+(1-\eps )b(q,v)+b(w,u)-\eps c(q,w)\,.
\end{equation*}%
Then  $C_{\eps }$ is continuous, with continuity constant $M=2$, and satisfies moreover the inf-sup condition
\begin{equation}
\inf_{(u,q)\in {{X}_{\eps }}}\sup_{(v,w)\in {{X}_{\eps }}}\frac{C_{\eps %
}((u,q),(v,w))}{\|u,q\|_{{{X}_{\eps }}}\|v,w\|_{{{X}_{\eps }}}}\geq \beta \,,
\label{infsup_C}
\end{equation}%
with a constant $\beta >0$ that depends only on $\alpha $.
\end{lemma}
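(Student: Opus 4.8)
The statement has two parts. The continuity claim is routine: I would estimate the four terms of $C_\eps((u,q),(v,w))$ separately by Cauchy--Schwarz, using $0\le 1-\eps\le 1$, the identities $a(u,u)=\|u\|_V^2$ and $c(q,q)=\|q\|_L^2$, and the continuity of $b$ on $\tilde L\times V$ (obtained from the hypothesis on $L\times V$ by density, as in Remark~\ref{remal0}), to get $C_\eps((u,q),(v,w))\le\|u\|_V\|v\|_V+\|q\|_{\tilde L}\|v\|_V+\|w\|_{\tilde L}\|u\|_V+\eps\|q\|_L\|w\|_L$; regrouping the right-hand side by one further Cauchy--Schwarz inequality in $\mathbb R^4$ yields $C_\eps((u,q),(v,w))\le(2\|u\|_V^2+\|q\|_{\tilde L}^2+\eps\|q\|_L^2)^{1/2}(2\|v\|_V^2+\|w\|_{\tilde L}^2+\eps\|w\|_L^2)^{1/2}\le 2\|u,q\|_{X_\eps}\|v,w\|_{X_\eps}$, exactly as already carried out for the concrete form just before \eqref{infsup-C}. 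So $M=2$, uniformly in $\eps\in[0,1]$.

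For the inf-sup condition I would follow the classical scheme for penalised saddle-point problems: for a given $(u,q)\in X_\eps$, construct one explicit test pair $(v,w)\in X_\eps$ such that $C_\eps((u,q),(v,w))\ge c\,\|u,q\|_{X_\eps}^2$ and $\|v,w\|_{X_\eps}\le\sqrt 2\,\|u,q\|_{X_\eps}$, which immediately gives \eqref{infsup_C} with $\beta=c/\sqrt 2$. The natural ingredients are, on one hand, $w=-q$, which turns $-\eps c(q,w)$ into $+\eps\|q\|_L^2$ and cancels the bulk of the skew coupling, and on the other hand the inf-sup hypothesis \eqref{infsup_b} applied through the Riesz representation: for fixed $q$ let $v_q\in V$ solve $a(v_q,v)=b(q,v)$ for all $v\in V$, so that $b(q,v_q)=\|v_q\|_V^2$ and, by \eqref{infsup_b} and the continuity of $b$, $\alpha\|q\|_{\tilde L}\le\|v_q\|_V\le\|q\|_{\tilde L}$. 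I then test with $(v,w)=(u+\delta v_q,\,-q)$ for a small fixed $\delta\in(0,\tfrac14)$, independent of $\eps$ and $\alpha$.

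The expansion reads $C_\eps((u,q),(u+\delta v_q,-q))=\|u\|_V^2+\delta\,a(u,v_q)-\eps\,b(q,u)+\delta(1-\eps)\|v_q\|_V^2+\eps\|q\|_L^2$. Absorbing $\delta\,a(u,v_q)$ by Young and $\eps\,b(q,u)$ by $|\eps\,b(q,u)|\le\eps\|q\|_{\tilde L}\|u\|_V\le\eps\|q\|_L\|u\|_V\le\tfrac\eps2\|u\|_V^2+\tfrac\eps2\|q\|_L^2$ (using $\eps\le1$ and $\|q\|_{\tilde L}\le\|q\|_L$), one is left with a bound of the form $\bigl(\tfrac34-\tfrac\eps2\bigr)\|u\|_V^2+\delta(1-\eps-\delta)\|v_q\|_V^2+\tfrac\eps2\|q\|_L^2$. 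The $\|u\|_V^2$ and $\eps\|q\|_L^2$ contributions to the target norm are thus controlled directly; for the missing $\|q\|_{\tilde L}^2$ contribution a short case distinction on $\eps$ suffices. For $\eps\le\tfrac12$ the middle term dominates, since $\delta(1-\eps-\delta)\|v_q\|_V^2\ge\delta(\tfrac12-\delta)\alpha^2\|q\|_{\tilde L}^2$, a fixed positive multiple of $\|q\|_{\tilde L}^2$. For $\eps\ge\tfrac12$ the middle term may be (slightly) negative, bounded below by $-\delta^2\|q\|_{\tilde L}^2$ because $\|v_q\|_V\le\|q\|_{\tilde L}$, but is then compensated by reserving a fraction of $\tfrac\eps2\|q\|_L^2$, which for $\eps\ge\tfrac12$ exceeds $\tfrac14\|q\|_L^2\ge\tfrac14\|q\|_{\tilde L}^2$. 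In both regimes $C_\eps((u,q),(u+\delta v_q,-q))\ge c\,(\|u\|_V^2+\|q\|_{\tilde L}^2+\eps\|q\|_L^2)$ with $c$ a fixed positive multiple of $\alpha^2$ (recall $\alpha\le1$, since the continuity constant of $b$ is $1$), and since $\|u+\delta v_q,-q\|_{X_\eps}^2\le 2\|u\|_V^2+2\delta^2\|q\|_{\tilde L}^2+\|q\|_{\tilde L}^2+\eps\|q\|_L^2\le 2\|u,q\|_{X_\eps}^2$, this proves \eqref{infsup_C} with $\beta$ depending only on $\alpha$.

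I expect the only point needing care to be the bookkeeping for $\eps$ near $1$: there the coupling $(1-\eps)b(q,v)$ degenerates, so the control of $\|q\|_{\tilde L}$ must be recovered entirely from the penalty $\eps\|q\|_L^2$, and the various Young parameters must be tuned so that the resulting constant $c$ stays bounded below independently of $\eps$ (and proportional to $\alpha^2$). Everything else is a routine chain of Cauchy--Schwarz and Young inequalities. (In the application of the lemma one has $\|q\|_{\tilde L}=|q|_*=\sup_{v}a_\|(q,v)/|v|_{\mathcal V}$, hence $\alpha=1$ and $\beta$ is a universal constant.)
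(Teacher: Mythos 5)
Your proof is correct, and both halves check out: the continuity estimate is literally the computation the paper performs just before the lemma (giving $M=2$), and your inf-sup argument closes with a constant $c\gtrsim\alpha^2$ as claimed. The route, however, differs from the paper's in how the test functions are handled. You build a single explicit test pair $(u+\delta v_q,\,-q)$, with $v_q$ the Riesz representative of $b(q,\cdot)$ in $V$, and verify directly that $C_\eps$ is ``coercive'' against it, i.e.\ $C_\eps((u,q),(u+\delta v_q,-q))\ge c\|u,q\|_{X_\eps}^2$ together with $\|u+\delta v_q,-q\|_{X_\eps}\le\sqrt2\,\|u,q\|_{X_\eps}$. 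The paper instead never exhibits the optimal $v$: it sets $Z:=\sup C_\eps((u,q),(v,w))/\|v,w\|_{X_\eps}$, derives the scalar bound $(1-\eps)\alpha\|q\|_{\tilde L}\le Z+\|u\|_V$ by testing with pairs $(v,0)$, separately tests with $(u,-q)$ to get $(1-\tfrac{\eps}{2})\|u\|_V^2+\tfrac{\eps}{2}\|q\|_L^2\le C_\eps((u,q),(u,-q))$, and then combines the two inequalities algebraically via Young's inequality with a free parameter $\gamma$. The two proofs share the essential ingredients --- the choice $w=-q$ to activate the penalty term, the inf-sup of $b$ to recover $\|q\|_{\tilde L}$, and the same case split at $\eps=1/2$ where the degenerating factor $(1-\eps)$ is traded against the penalty $\eps\|q\|_L^2$ using $\|q\|_{\tilde L}\le\|q\|_L$ --- but yours is the ``constructive test function'' variant (which yields explicit, trackable constants and a concrete maximizing direction), while the paper's is the ``combine the estimates'' variant (which avoids having to bound the norm of a composite test function and keeps all manipulations at the level of scalar inequalities). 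One small point worth making explicit in your write-up: for $\eps=0$ one has $q\in\tilde L$, so the boundedness of the functional $v\mapsto b(q,v)$ on $V$ (needed to define $v_q$) comes from the density extension of $b$ to $\tilde L\times V$, exactly as in Remark \ref{remal0}; you allude to this, and in the application it is automatic since $\|q\|_{\tilde L}=|q|_*$ is by definition the norm of that functional.
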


\begin{proof}
To prove (\ref{infsup_C}), let us fix an arbitrary $(u,q)\in X{_{\eps }}$
and denote 
\begin{equation*}
Z:=\sup_{(v,w)\in {{X}_{\eps }}}\frac{C_{\eps }((u,q),(v,w))}{\|v,w\|_{{{X}_{%
\eps }}}}\,.
\end{equation*}%
We want to prove that $Z\geq \beta \|u,q\|_{{{X}_{\eps }}}$. First, we have 
\begin{equation*}
(1-\eps )\alpha \|q\|_{\tilde{{L}}}\leq \sup_{v\in {V}}\frac{(1-\eps )b(q,v)%
}{\|v\|_{{V}}}\leq \sup_{v\in {V}}\frac{C_{\eps }((u,q),(v,0))}{\|v\|_{{V}}}%
+\sup_{v\in {V}}\frac{a(u,v)}{\|v\|_{{V}}}\leq Z+\|u\|_{{V}}\,.
\end{equation*}%
Now, we take $v=u$, $w=-q$ and observe that%
\begin{eqnarray*}
C_{\eps }((u,q),(u,-q)) &=&a(u,u)-\eps b(q,u)+\eps c(q,q) \\
&\geq &(1-\frac{\eps }{2})\|u\|_{V}^{2}+\frac{\eps }{2}\|q\|_{L}^{2}\,,
\end{eqnarray*}%
implying altogether 
\begin{eqnarray*}
\frac{1}{2}\|u\|_{V}^{2}+\frac{\eps }{2}\|q\|_{L}^{2}+\frac{\alpha ^{2}(1-%
\eps )^{2}}{8}\|q\|_{\tilde{{L}}}^{2} &\leq &C_{\eps }((u,q),(u,-q))+\frac{1%
}{8}(Z+\|u\|_{V})^{2} \\
&\leq &Z\|u,-q\|_{X{_{\eps }}}+\frac{1}{4}Z^{2}+\frac{1}{4}\|u\|_{V}^{2}\,.
\end{eqnarray*}%
Thus,%
\begin{equation*}
\frac{1}{4}\|u\|_{V}^{2}+\frac{\eps }{2}\|q\|_{L}^{2}+\frac{\alpha ^{2}(1-%
\eps )^{2}}{8}\|q\|_{\tilde{{L}}}^{2}\leq Z\|u,q\|_{X{_{\eps }}}+\frac{1}{2}%
Z^{2}\,\leq \frac{1}{2\gamma }\|u,q\|_{X{_{\eps }}}^{2}+\frac{1+\gamma }{2}%
Z^{2}\,,
\end{equation*}%
for any $\gamma >0$ by Young inequality. Besides, we have for any $\eps
\in \lbrack 0,1]$ 
\begin{equation*}
\frac{\eps }{2}\|q\|_{L}^{2}+\frac{\alpha ^{2}(1-\eps )^{2}}{8}\|q\|_{\tilde{%
{L}}}^{2}\geq c_{0}(\|q\|_{\tilde{{L}}}^{2}+\eps \|q\|_{L}^{2})\,,
\end{equation*}%
with a constant $c_{0}>0$ depending only on $\alpha $. Indeed, for $\eps \in
\lbrack 0,1/2]$ we can observe that $(1-\eps )^{2}\geq \frac{1}{4}$ and conclude.
For $\eps \in \lbrack 1/2,1]$, we can neglect the term with $\|q\|_{\tilde{{L%
}}}^{2}$ on the left-hand side and use $\|q\|_{\tilde{L}}\leq \|q\|_L$.

\noindent Thus, assuming without loss of generality that $c_{0}\leq \frac{1}{4}$ we have
\begin{equation*}
c_{0}\|u,q\|_{X{_{\eps }}}^{2}\,\leq \frac{1}{2\gamma }\|u,q\|_{X{_{\eps }}%
}^{2}+\frac{1+\gamma }{2}Z^{2}\,.
\end{equation*}%
Taking finally a sufficiently big $\gamma $ gives immediately $\|u,q\|_{{{X}_{\eps }}}\leq
(1/\beta )Z$ with a constant $\beta >0$, independent of $\eps $.
\end{proof}

\medskip

The following theorem is the main theorem on the continuous level, which
shows that the AP-reformulation (\ref{Pa}) of problem (\ref{PPP}) is
well-posed and better adapted to capture the macro-scale behaviour of $u^%
\eps
$ in the limit $\eps \rightarrow 0$. This AP-model provides thus a link
between the micro-scale ($\eps \sim 1$) and the macro-scale ($\eps \sim 0$)
behaviour of the system.

\begin{theorem}
\label{MainThCont} \textbf{(Existence/Uniqueness/$\eps$-Convergence)} Let
hypothesis A be satisfied. The AP-problem (\ref{Pa_bis}) is well-posed for each $%
\eps\in[0,1]$, i.e. for any $f\in\mathcal{V}^{\prime }$ and any $\eps\in[0,1]
$ there exists a unique solution $(u^\eps,q^\eps) \in {\mathcal{X}_\eps}$,
which satisfies 
\begin{equation*}
\|u^\eps,q^\eps\|_{\mathcal{X}_\eps} \le \frac{1}{\beta} \|f\|_{\mathcal{V}%
^{\prime }}\,,
\end{equation*}
with $\beta >0$ the constant given by the inf-sup condition (\ref{infsup_C}%
). Moreover, we have the $\eps$-convergences 
\begin{equation*}
\|u^{\eps }-u^{0},q^{\eps }-q^{0}\|_{{\mathcal{X}_\eps}}\rightarrow 0\,,
\quad \text{for} \quad \eps \rightarrow 0\,.
\end{equation*}
If we suppose more regular data, as $f \in L^2(\Omega)$, then one has even $q^0 \in \mathcal{L}_{in}$ and the
estimates 
\begin{equation} \label{Ces}
|u^\eps- u^0|_{\mathcal{V}} \le C \sqrt{\eps}\,, \quad |q^\eps- q^0|_* \le
C \sqrt{\eps}\,,
\end{equation}
with $C>0$ some $\eps$-independent constant. 
\end{theorem}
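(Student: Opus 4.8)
The plan is to obtain all three assertions from the $\eps$-uniform inf-sup estimate (\ref{infsup-C}) for the coupled form $C_\eps$ on $\mathcal{X}_\eps$, which Lemma \ref{infsup} already provides. Problem (\ref{Pa_bis}) reads exactly $C_\eps((u^\eps,q^\eps),(v,w))=(f,v)$ for all $(v,w)\in\mathcal{X}_\eps$, so, $C_\eps$ being continuous (\ref{cont_C}) and satisfying (\ref{infsup-C}) with an $\eps$-independent $\beta>0$, existence, uniqueness and stability follow from the Banach--Ne\v{c}as--Babu\v{s}ka theorem; the transposed non-degeneracy condition that theorem also requires is obtained by repeating the argument of Lemma \ref{infsup} on the system with its two equations interchanged. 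For the a priori bound one only notes that the load functional satisfies $\|(f,\cdot)\|_{\mathcal{X}_\eps'}=\sup_{(v,w)}(f,v)/\|v,w\|_{\mathcal{X}_\eps}\le\sup_{v\in\mathcal{V}}(f,v)/|v|_{\mathcal{V}}=\|f\|_{\mathcal{V}'}$, whence $\|u^\eps,q^\eps\|_{\mathcal{X}_\eps}\le\beta^{-1}\|f\|_{\mathcal{V}'}$.

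The engine of the convergence analysis is a residual identity. Fix $\eps\in(0,1]$ and any $\bar q\in\mathcal{L}_{in}$; then $(u^\eps-u^0,q^\eps-\bar q)\in\mathcal{X}_\eps$. Using the limit equations $a(u^0,v)+a_\|(q^0,v)=(f,v)$ and $a_\|(u^0,w)=0$ (the second valid on all of $\tilde{\mathcal{L}}_{in}\supset\mathcal{L}_{in}$), together with the definition (\ref{Ceps}) of $C_\eps$ and linearity, one finds for every $(v,w)\in\mathcal{X}_\eps$
\begin{equation*}
C_\eps\big((u^\eps-u^0,\,q^\eps-\bar q),(v,w)\big)=a_\|(q^0-\bar q,v)+\eps\,a_\|(\bar q,v)+\eps\,a_\|(\bar q,w).
\end{equation*}
Applying (\ref{infsup-C}) to the left side and bounding the right side by $|q^0-\bar q|_*|v|_{\mathcal{V}}+\eps|\bar q|_*|v|_{\mathcal{V}}+\eps|\bar q|_\||w|_\|$, then using $|v|_{\mathcal{V}}\le\|v,w\|_{\mathcal{X}_\eps}$ and $|w|_\|\le\eps^{-1/2}\|v,w\|_{\mathcal{X}_\eps}$, gives the key estimate
\begin{equation*}
\beta\,\|u^\eps-u^0,\,q^\eps-\bar q\|_{\mathcal{X}_\eps}\le|q^0-\bar q|_*+\eps\,|\bar q|_*+\sqrt{\eps}\,|\bar q|_\|.
\end{equation*}

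For general $f\in\mathcal{V}'$ the multiplier $q^0$ lies only in $\tilde{\mathcal{L}}_{in}$, the $|\cdot|_*$-closure of $\mathcal{L}_{in}$, so we cannot take $\bar q=q^0$; instead, given $\delta>0$ we pick $q_\delta\in\mathcal{L}_{in}$ with $|q^0-q_\delta|_*<\delta$, apply the key estimate with $\bar q=q_\delta$, and combine it with $|q^\eps-q^0|_*\le|q^\eps-q_\delta|_*+\delta$ and with $|u^\eps-u^0|_{\mathcal{V}},\,|q^\eps-q_\delta|_*\le\|u^\eps-u^0,q^\eps-q_\delta\|_{\mathcal{X}_\eps}$, to obtain $|u^\eps-u^0|_{\mathcal{V}}+|q^\eps-q^0|_*\le\beta^{-1}\big(\delta+\eps|q_\delta|_*+\sqrt{\eps}\,|q_\delta|_\|\big)+\delta$; letting $\eps\to0$ for fixed $\delta$ and then $\delta\to0$ yields $|u^\eps-u^0|_{\mathcal{V}}\to0$ and $|q^\eps-q^0|_*\to0$. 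The remaining $\eps$-weighted part of the norm tends to zero because $\eps|q^\eps|_\|^2\to0$: testing the two equations of (\ref{Pa}) with $v=u^\eps$ and $w=q^\eps$ and using the symmetry of $a_\|$ gives the energy identity $(1-\eps)\eps|q^\eps|_\|^2=(f,u^\eps)-|u^\eps|_{\mathcal{V}}^2$, whose right-hand side converges to $(f,u^0)-|u^0|_{\mathcal{V}}^2$, which vanishes since $a(u^0,u^0)=(f,u^0)$ by testing (\ref{LL}) with $u^0\in\mathcal{G}$.

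If $f\in L^2(\Omega)$ one has in addition $q^0\in\mathcal{L}_{in}$, with $|q^0|_\|$ bounded independently of $\eps$; this is a genuine regularity statement about the Lagrange multiplier, obtained by integrating the ODE that $q^0$ satisfies along the field lines of $b$ with an $L^2$ right-hand side, and it is one of the technical lemmas deferred to the appendix. Granted it, the key estimate with $\bar q=q^0$ (so the $|q^0-\bar q|_*$ term drops) and $\eps\le\sqrt{\eps}$ on $[0,1]$ give directly $\beta\|u^\eps-u^0,q^\eps-q^0\|_{\mathcal{X}_\eps}\le\eps|q^0|_*+\sqrt{\eps}\,|q^0|_\|\le C\sqrt{\eps}$, which in particular bounds $|u^\eps-u^0|_{\mathcal{V}}$ and $|q^\eps-q^0|_*$ and proves (\ref{Ces}). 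The two points I expect to cost the most effort are exactly these: first, the density/approximation step forced by the gap $\mathcal{L}_{in}\subsetneq\tilde{\mathcal{L}}_{in}$, which is also why bare convergence for $f\in\mathcal{V}'$ carries no rate; and second, more substantially, the $\eps$-uniform regularity $q^0\in\mathcal{L}_{in}$ for $f\in L^2$, which does not come from the abstract inf-sup machinery but from the transport structure along $b$ and is handled separately.
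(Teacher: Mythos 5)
Your proposal is correct and rests on the same engine as the paper's proof: existence, uniqueness and the a priori bound via the Banach--Ne\v{c}as--Babu\v{s}ka theorem from the $\eps$-uniform inf-sup condition of Lemma \ref{infsup}, and the rate \eqref{Ces} by applying that inf-sup condition to the difference of the $(AP_{in})^\eps$- and $(L_{in})$-problems, whose residual is $O(\eps)\,a_{\|}(q^0,\cdot)$ and hence $O(\sqrt\eps)$ after paying the factor $\eps^{-1/2}$ on the $|w|_{\|}$ component; like the paper, you correctly isolate $q^0\in\mathcal{L}_{in}$ for $f\in L^2(\Omega)$ as an external regularity fact (the paper imports it from \cite{DLNN}). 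Where you genuinely diverge is the passage to general $f\in\mathcal{V}'$: the paper approximates the \emph{datum}, writing $f=f_\delta+R_\delta$ with $f_\delta\in L^2(\Omega)$ and combining the $L^2$-rate for $f_\delta$ with the uniform stability bound applied to $R_\delta$, so its density argument leans on the regularity lemma as a stepping stone; you instead approximate the \emph{multiplier}, replacing $q^0\in\tilde{\mathcal{L}}_{in}$ by $q_\delta\in\mathcal{L}_{in}$ inside a Strang-type estimate $\beta\,\|u^\eps-u^0,q^\eps-\bar q\|_{\mathcal{X}_\eps}\le|q^0-\bar q|_*+\eps|\bar q|_*+\sqrt\eps\,|\bar q|_{\|}$ valid for arbitrary $\bar q\in\mathcal{L}_{in}$, which makes the bare convergence statement independent of the regularity lemma and cleanly explains why no rate survives the approximation. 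Your extra energy identity $(1-\eps)\eps|q^\eps|_{\|}^2=(f,u^\eps)-|u^\eps|_{\mathcal{V}}^2$ to control the $\eps$-weighted part of the norm is a point the paper glosses over (its triangle inequality formally involves $\eps|q^0(R_\delta)|_{\|}^2$ with $q^0(R_\delta)$ only in $\tilde{\mathcal{L}}_{in}$), so on that detail your treatment is actually the more careful one.
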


\begin{proof}
The existence and uniqueness of a solution $(u^{\eps},q^{\eps})\in {\mathcal{%
X}_{\eps}}$ for each $\eps\geq 0$, is a simple consequence of the Banach-Ne\v{c}as-Babu\v{s}ka (hereafter BNB)
theorem \cite{ern}. To prove the convergence $(u^{\eps},q^{\eps})\rightarrow
(u^{0},q^{0})$ we assume first that $f\in L^{2}(\Omega )$. We have proved in 
\cite{DLNN} that $q^{0}\in \mathcal{L}_{in}$ in this case. Subtracting now \eqref{L} from \eqref{Pa} yields
\begin{equation*}
C_{\eps}((u^{\eps}-u^{0},q^{\eps}-q^{0}),(v,w))=\eps a_{\|}(q^{0},v+w)\,,%
\quad \forall (v,w)\in \mathcal{V}\times {\mathcal{L}}_{in}\,.
\end{equation*}%
Thus, for any $\eps>0$, by the inf-sup property, there exist $(v,w)\in \mathcal{X}_{\eps}=\mathcal{V}\times {\mathcal{L}}_{in}$
such that%
\begin{equation*}
\beta' \|u^{\eps}-u^{0},q^{\eps}-q^{0}\|_{{\mathcal{X}_{\eps}}}\,\|v,w\|_{{%
\mathcal{X}_{\eps}}}\leq \eps a_{\|}(q^{0},v+w) \leq\eps|q^{0}|_{\|}|v+w|_{%
\|}\leq \sqrt{2\eps}|q^{0}|_{\|}\|v,w\|_{{\mathcal{X}_{\eps}}}\,,
\end{equation*}%
with some $0<\beta'<\beta$, for ex. $\beta':=\beta/2$, which implies $\|u^{\eps}-u^{0},q^{\eps}-q^{0}\|_{{\mathcal{X}_{\eps}}}\leq 
\frac{\sqrt{2\eps}}{\beta'}\,|q^{0}|_{\|}$,  leading to the convergence estimates \eqref{Ces}.
\newline
We are now going to generalize this result to any $f\in \mathcal{V}^{\prime
} $ by a density argument. Let us denote simply by $U^{\eps}(f)$ the
solution $(u^{\eps},q^{\eps})\in \mathcal{X}_{\eps}$ of (\ref{Pa_bis})
associated to $f\in \mathcal{V}^{\prime }$. Now fix some $f\in \mathcal{V}%
^{\prime }$. Since $L^{2}(\Omega )$ is dense in $\mathcal{V}^{\prime }$, for
any $\delta >0$ there exists $f_{\delta }\in L^{2}(\Omega )$ such that $%
f=f_{\delta }+R_{\delta }$ with $\|R_{\delta }\|_{\mathcal{V}^{\prime }}<%
\frac{\delta \beta' }{4}$. Hence, there exists $\eps_{0}>0$ such that for all 
$\eps<\eps_{0}$%
\begin{equation*}
\|U^{\eps}(f)-U^{0}(f)\|_{\mathcal{X}_{\eps}}\leq \,\|U^{\eps}(f_{\delta
})-U^{0}(f_{\delta })\|_{\mathcal{X}_{\eps}}+\|U^{\eps}(R_{\delta
})-U^{0}(R_{\delta })\|_{\mathcal{X}_{\eps}}<\frac{\delta }{2}+\frac{2}{%
\beta' }\|R_{\delta }\|_{\mathcal{V}^{\prime }}<\delta \,.
\end{equation*}%
Here we used the fact that $f_{\delta }\in L^{2}(\Omega )$ which implies $\|U^{\eps}(f_{\delta })-U^{0}(f_{\delta })\|_{\mathcal{X}%
_{\eps}}\leq C{\frac{\sqrt{\eps}}{\beta' }}\rightarrow 0$ as $\eps\rightarrow
0$.
\end{proof}


\subsection{The numerical analysis of the inflow AP-scheme}

Having reformulated on the continuous level the singularly-perturbed  problem $(P)^\eps$ into a system  $(AP_{in})^\eps$ which is better suited to capture the macroscopic limit as $\eps \rightarrow 0$, we shall now discretize via a standard approach this new system and analyse the obtained AP-scheme in detail. In particular error estimates are deduced and the convergence of
the scheme independently on the anisotropy parameter $\eps$ is shown.

Let us introduce a mesh ${\mathcal{T}}_{h}$ on $\Omega $ consisting of
triangles (resp. rectangles) of maximal size $h$, let ${V}_{h}\subset 
\mathcal{V}$ be the finite dimensional space of $\mathbb{P}_{k}$ (resp. $%
\mathbb{Q}_{k}$) finite elements on ${\mathcal{T}}_{h}$, and let us define ${L}_{h}:={V}_{h}\cap \mathcal{L}_{in}={V}_{h}\cap \tilde{\mathcal{L}}_{in}$ as well as $X_{h}:={V}_{h}\times {L}_{h}$. Note
that we require ${L}_{h}\subset V_{h}$, which signifies that we enforce the
boundary conditions on $\Gamma _{D}$ for functions in ${L}_{h}$, cf. \cite%
{DLNN}. We are thus looking for a discrete solution $(u_{h}^{\eps},q_{h}^{%
\eps })\in {V}_{h}\times {L}_{h}$ of 
\begin{equation}
(AP_{in})^\eps_{h}\,\,\,\left\{ 
\begin{array}{l}
a(u_{h}^{\eps },v_{h})+(1-\eps )a_{\|}(q_{h}^{\eps },v_{h})=(f,v_{h})\,,%
\quad \forall v_{h}\in {V}_{h} \\[3mm] 
a_{\|}(u_{h}^{\eps },w_{h})-\eps a_{\|}(q_{h}^{\eps },w_{h})=0,\quad \forall
w_{h}\in {L}_{h}\,.%
\end{array}%
\right.  \label{Ph}
\end{equation}

\noindent The analysis of this scheme would be straightforward if the discrete inf-sup
condition 
\begin{equation}  \label{infsupfake}
\inf_{q_{h}\in {L}_{h}}\sup_{v_{h}\in {V}_{h}}\frac{a_{\|}(q_{h},v_{h})}{%
|q_{h}|_*|v_{h}|_{\mathcal{V}}}\geq \alpha\,,
\end{equation}%
were satisfied with an $\eps$- as well as mesh-independent constant $\alpha >0$. However, this
constant is unfortunately mesh dependent, as shown in Appendix C. In order
to circumvent this difficulty we introduce the following mesh-dependent norm
on $L_{h}$%
\begin{equation*}
|q_{h}|_{*h}:=\sup_{v_{h}\in {V}_{h}}\frac{a_{\|}(q_{h},v_{h})}{|v_{h}|_{%
\mathcal{V}}}\,.
\end{equation*}%
Note that this is indeed a norm, since $|q_{h}|_{*h}=0$ implies $%
a_{\|}(q_{h},q_{h})=0$ due to the inclusion ${L}_{h}\subset V_{h}$ and thus $%
\nabla _{\|}q_{h}=0$, which in combination with the boundary conditions on $%
\Gamma _{in}$ yields $q_{h}=0.$\\

\noindent We now equip the space $X_{h}$ with the norm 
$$
\|u_{h},q_{h}\|_{X_{\eps,h}}=(|u_{h}|_{\mathcal{V}}^{2}+|q_{h}|_{*h}^{2}+\eps |q_{h}|_{\|}^{2})^{1/2}\,.
$$
By Lemma \ref{infsup}, the bilinear form $C_{\eps}$
is continuous on $X_{h}$ with this norm, and enjoys the inf-sup property%
\begin{equation}
\inf_{(u_{h},q_{h})\in X_{h}}\sup_{(v_{h},w_{h})\in X_{h}}\frac{C_{\eps%
}((u_{h},q_{h}),(v_{h},w_{h}))}{\|u_{h},q_{h}\|_{X_{\eps,h}}\|v_{h},w_{h}%
\|_{X_{\eps,h}}}\geq \beta \,,  \label{infsuph}
\end{equation}%
with a constant $\beta >0$ that does not depend neither on the mesh nor on $%
\eps $. This implies the discrete version of theorem \ref{MainThCont}.

\begin{theorem}
\label{APproph} \textbf{(Discrete Existence/Uniqueness/$\eps$-Convergence)}
The discrete AP-problem (\ref{Ph}) admits for each fixed $h >0$ and $\eps \ge
0 $ a unique solution $(u_{h}^{\eps},q_{h}^{\eps})\in {V}_{h}\times {L}_{h}$%
, satisfying 
\begin{equation*}
\|u_{h}^{\eps},q_{h}^{\eps}\|_{X_{\eps,h}}\leq \frac{1}{\beta }\|f\|_{%
\mathcal{V}^{\prime }}\,,
\end{equation*}%
and one has the $\eps$-convergence 
$$
\|u_{h}^{\eps}-u_{h}^{0},q_{h}^{\eps}-q_{h}^{0}\|_{X_{\eps,h}}
\rightarrow 0 \quad \textrm{for} \quad \eps\rightarrow 0.
$$

Moreover, the condition number of the matrix corresponding to problem (\ref{Ph}) is bounded by a constant independent of $\eps$ (assuming that the same bases
of $V_{h}$ and $L_h$ are chosen for all values of $\eps$).
\end{theorem}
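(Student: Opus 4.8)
The plan is to derive Theorem~\ref{APproph} as a direct consequence of the abstract Lemma~\ref{infsup} together with the BNB (Banach--Ne\v cas--Babu\v ska) theorem, exactly as was done on the continuous level in Theorem~\ref{MainThCont}. First I would note that $X_h = V_h \times L_h$ is a finite-dimensional Hilbert space, that $V_h \subset \mathcal{V}$ and $L_h = V_h \cap \mathcal{L}_{in}$ inherit the scalar products $a(\cdot,\cdot)$ and $a_\|(\cdot,\cdot)$, and that $|\cdot|_{*h}$ is a genuine norm on $L_h$ (this is argued in the excerpt via $L_h \subset V_h$ and the inflow boundary condition). Crucially, $|q_h|_{*h} = \sup_{v_h \in V_h} a_\|(q_h,v_h)/|v_h|_{\mathcal{V}} \le \sup_{v\in\mathcal V} a_\|(q_h,v)/|v|_{\mathcal V} = |q_h|_* \le |q_h|_\|$, and the inf-sup inequality $\inf_{q_h\in L_h}\sup_{v_h\in V_h} a_\|(q_h,v_h)/(|q_h|_{*h}|v_h|_{\mathcal V}) = 1$ holds \emph{by the very definition} of the norm $|\cdot|_{*h}$ — this is the whole point of replacing $|\cdot|_*$ by the mesh-dependent $|\cdot|_{*h}$, since the fake inf-sup \eqref{infsupfake} in the $|\cdot|_*$ norm fails with an $h$-independent constant.

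Thus I would apply Lemma~\ref{infsup} with the dictionary $V \rightsquigarrow V_h$, $L \rightsquigarrow L_h$ equipped with $\|\cdot\|_L \rightsquigarrow |\cdot|_\|$, $\tilde L \rightsquigarrow L_h$ equipped with $\|\cdot\|_{\tilde L} \rightsquigarrow |\cdot|_{*h}$ (here $\tilde L$ and $L$ coincide as sets, only the norms differ, which is allowed since the lemma only requires $\|q\|_{\tilde L} \le \|q\|_L$, satisfied above), and the three bilinear forms $a, b, c \rightsquigarrow a(\cdot,\cdot), a_\|(\cdot,\cdot), a_\|(\cdot,\cdot)$. The continuity hypothesis $|b(q_h,v_h)| = |a_\|(q_h,v_h)| \le |v_h|_{\mathcal V}\,|q_h|_{*h}$ holds again by definition of $|\cdot|_{*h}$, and the inf-sup hypothesis \eqref{infsup_b} holds with $\alpha = 1$. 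The lemma then yields that $C_\eps$ is continuous on $X_h$ (with constant $M=2$) and satisfies the discrete inf-sup condition \eqref{infsuph} with a constant $\beta>0$ depending only on $\alpha=1$, hence independent of both $h$ and $\eps$. Since $X_h$ is finite-dimensional, continuity plus the inf-sup condition (the BNB conditions, the surjectivity/injectivity being automatic in finite dimension once the inf-sup holds) give existence and uniqueness of $(u_h^\eps,q_h^\eps)\in X_h$ solving \eqref{Ph} — one checks that \eqref{Ph} is equivalent to $C_\eps((u_h^\eps,q_h^\eps),(v_h,w_h)) = (f,v_h)$ for all $(v_h,w_h)\in X_h$ — together with the a priori bound $\|u_h^\eps,q_h^\eps\|_{X_{\eps,h}} \le \tfrac1\beta \|f\|_{\mathcal V'}$, which follows by taking the sup over $(v_h,w_h)$ in the inf-sup inequality and using $|(f,v_h)| \le \|f\|_{\mathcal V'}|v_h|_{\mathcal V} \le \|f\|_{\mathcal V'}\|v_h,w_h\|_{X_{\eps,h}}$.

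For the $\eps$-convergence $\|u_h^\eps - u_h^0, q_h^\eps - q_h^0\|_{X_{\eps,h}} \to 0$, I would mimic the continuous proof: subtract the $\eps=0$ equations from the $\eps>0$ equations to get $C_\eps((u_h^\eps - u_h^0, q_h^\eps - q_h^0),(v_h,w_h)) = \eps\, a_\|(q_h^0, v_h + w_h)$ for all $(v_h,w_h)\in X_h$ (noting $C_\eps$ and $C_0$ differ by exactly the $\eps$-terms $-\eps a_\|(q^0,v) - \eps a_\|(q^0,w)$, and the bilinear form is evaluated on the \emph{same} space $X_h$ for all $\eps$). Applying \eqref{infsuph} and bounding $|\eps a_\|(q_h^0,v_h+w_h)| \le \eps |q_h^0|_\| |v_h+w_h|_\| \le \sqrt{2\eps}\,|q_h^0|_\|\,\|v_h,w_h\|_{X_{\eps,h}}$ gives $\|u_h^\eps-u_h^0,q_h^\eps-q_h^0\|_{X_{\eps,h}} \le \tfrac{\sqrt{2\eps}}{\beta}|q_h^0|_\| \to 0$; here $|q_h^0|_\|$ is finite and $h$-dependent but $\eps$-independent, which is all that is needed. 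The main obstacle — and it is mild — is bookkeeping: being careful that $|\cdot|_{*h}$ genuinely satisfies the hypotheses of Lemma~\ref{infsup} with $\tilde L = L$ as sets, and that the equivalence between the discrete variational problem \eqref{Ph} and the single-equation form $C_\eps(\cdot,\cdot)=(f,\cdot)$ is exact.

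Finally, for the condition number claim, I would argue as follows. Fix bases $\{\varphi_i\}$ of $V_h$ and $\{\psi_j\}$ of $L_h$, independent of $\eps$, and let $\mathbb{A}_\eps$ denote the stiffness matrix of the bilinear form $C_\eps$ in these bases. Equip the coefficient space with the Euclidean inner product induced by the (fixed, $\eps$-independent) Gram matrices of the bases in the norm $\|\cdot\|_{X_{1,h}}$ (the norm at $\eps=1$, say). Since all the norms $\|\cdot\|_{X_{\eps,h}}$ for $\eps\in[0,1]$ are mutually equivalent with $h$-dependent but benign constants — indeed $\|\cdot\|_{X_{0,h}} \le \|\cdot\|_{X_{\eps,h}} \le \|\cdot\|_{X_{1,h}}$ pointwise and on the finite-dimensional $L_h$ one has $|q_h|_\| \le c(h)|q_h|_{*h}$ so $\|\cdot\|_{X_{1,h}} \le C(h)\|\cdot\|_{X_{0,h}}$ — the operator norm $\|\mathbb{A}_\eps\|$ and, via the inf-sup bound $\|u_h,q_h\|_{X_{\eps,h}} \le \tfrac1\beta \sup C_\eps(\cdots)/\|\cdot\|_{X_{\eps,h}}$, the norm $\|\mathbb{A}_\eps^{-1}\|$, are both bounded above and below by constants that involve $h$ and these equivalence constants but \emph{not} $\eps$ (because $M=2$ and $\beta$ are $\eps$-free). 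Hence $\kappa(\mathbb{A}_\eps) = \|\mathbb{A}_\eps\|\,\|\mathbb{A}_\eps^{-1}\|$ is bounded uniformly in $\eps\in[0,1]$. I would remark that the $h$-dependence of this bound is not claimed to be uniform — only the $\eps$-independence matters here, which is precisely the asymptotic-preserving property of interest.
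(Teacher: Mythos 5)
Your proposal is correct and follows essentially the same route as the paper: the discrete inf-sup condition \eqref{infsuph} is obtained from Lemma~\ref{infsup} applied with $V=V_h$, $L=\tilde L=L_h$ normed by $|\cdot|_{\|}$ and $|\cdot|_{*h}$ respectively (with $\alpha=1$ by definition of $|\cdot|_{*h}$), existence/uniqueness and the a priori bound then follow from BNB, and the $\eps$-convergence is the same subtraction argument as in Theorem~\ref{MainThCont}, simplified because $q_h^0\in L_h$ automatically. Your condition-number argument is also the paper's (operator norm bounded via continuity, inverse bounded via the inf-sup, with $\eps$-independent norm-equivalence constants between $\|\cdot\|_{X_{\eps,h}}$ and a fixed Euclidean structure); the paper merely works with the plain coordinate $2$-norm and tracks the equivalence constants $\mu_u,\nu_u,\mu_*,\nu_*,\mu_q,\nu_q$ explicitly to get the quantitative bound \eqref{ESTii}.
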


\begin{proof}
The existence and uniqueness of a solution $(u_{h}^{\eps},q_{h}^{\eps})\in
 X_{h}$ for each $\eps\geq 0$, is a simple consequence of the BNB theorem \cite{ern}. The convergence $(u_{h}^{\eps},q_{h}^{\eps})\rightarrow _{\eps\rightarrow 0}(u_{h}^{0},q_{h}^{0})$ can be established by the same arguments as in
the proof of Theorem \ref{MainThCont}.

We turn now to the study of the condition number. Let $\{\phi^u
_{1},\ldots ,\phi^u _{N^u}\}$ (resp. $\{\phi^q
_{1},\ldots ,\phi^q _{N^q}\}$) be a basis of $V_{h}$ (resp. $L_h$). 
We shall identify every function $u_{h}\in V_{h}$ (resp. $q_{h}\in L_{h}$)
with a vector $\vec{u}\in \mathbb{R}^{N_u}$ (resp. $\vec{q}\in \mathbb{R}^{N_q}$) consisting of the expansion
coefficients of $u_{h}$ (resp. $q_h$) in these bases.  
Denoting the Euclidean norm of a vector by $||\cdot||_{2}$ and using the equivalence of norms on a finite dimensional space, we observe that for all $u_{h}\in V_{h}$ and $q_{h}\in L_{h}$ we have
\begin{equation*}
\mu _{u}||\vec{u}||_{2}\leq |u_{h}|_{\mathcal{V}}\leq \nu _{u}||\vec{u%
}||_{2}\,,\quad \mu _{q}||\vec{q}||_{2}\leq |q_{h}|_{||}\leq \nu
_{q}||\vec{q}||_{2},\quad \mu _{\ast }||\vec{q}||_{2}\leq |q_{h}|_{\ast
h}\leq \nu _{\ast }||\vec{q}||_{2}
\end{equation*}%
with some positive constants $\mu$'s and $\nu$'s. 
We shall moreover identify any $\Phi _{h}=(u_{h},q_{h})\in X_{h}$
with a vector $\vec{\Phi}\in \mathbb{R}^{N}$, $N=N_u+N_q$, such that  $\vec{\Phi}=(\vec{u}^T,\vec{q}^T)^T$.
We observe for any such $\Phi _{h}$ that 
$||\vec{\Phi}||_2^{2} =||\vec{u}||_2^{2}+||\vec{q}||_2^{2}  $, which in combination with the estimates above gives
$$
\min \{ \mu_u^2, \mu_*^2+\eps\, \mu_q^2 \} ||\vec{\Phi}||_2^{2} \le ||\Phi _{h}||_{X_{\varepsilon ,h}}^2 \le 
\max \{ \nu_u^2, \nu_*^2+\eps\, \nu_q^2 \} ||\vec{\Phi}||_2^{2}\,.
$$

Let now $A$  denote the $N_u\times N_u$ matrix with entries $a_{ij}=a(\phi^u_{i},\phi^u_{j})$, 
$B$  the $N_u\times N_q$ matrix with entries $b_{ij}=a_{||}(\phi^u_{i},\phi^q_{j})$, 
and $C$  the $N_q\times N_q$ matrix with entries $c_{ij}=a_{||}(\phi^q_{i},\phi^q_{j})$. 
The matrix corresponding to problem (\ref{Ph}) can be then written in the following block form 
$$
\mathbb{A}^{\varepsilon }=\left(\begin{array}{cc}
  A & ( 1 - \varepsilon) B\\
  B^T & \varepsilon C
\end{array}\right)\,.
$$
Its 2-norm denoted by $||\cdot ||_{2}$ is bounded for all $\varepsilon \in [0,1]$ by
\begin{eqnarray*}
  ||  \mathbb{A}^{\varepsilon} ||_2 & = &  \sup_{\vec{\Phi}, \vec{\Psi} \in
  \mathbb{R}^N \setminus \{0\}}  \frac{\vec{\Psi} \cdot
  \mathbb{A}^{\varepsilon}  \vec{\Phi}}{|| {  \vec{\Phi}
  ||_2^{} ||  \vec{\Psi} ||_2}} =  \sup_{\Phi_h, \Psi_h \in X_h \setminus
  \{0\}}  \frac{C_{\eps} (\Phi_h, \Psi_h)}{||  \vec{\Phi} ||_2^{} ||  \vec{\Psi}
  ||_2^{}}\\
  & \leq &  M \sup_{\Phi_h, \Psi_h \in X_h \setminus \{0\}}  \frac{|| \Phi_h
  ||_{X_{\varepsilon, h}}^{} || \Psi_h ||_{X_{\varepsilon, h}}^{}}{||
  \vec{\Phi} ||_2 ||\vec{\Psi} ||_2} \leq M \max (\nu_u^2, \nu_{\ast}^2 +
  \nu_q^2) 
\end{eqnarray*}
where $M$ is the ($\varepsilon $-independent) continuity constant of $%
C_{\varepsilon }$. Similarly, using the inf-sup property of this bilinear
form we know that for all $\Phi _{h}\in X_{h}$ there exists $\Psi _{h}\in
X_{h}$ such that%
\begin{equation*}
\begin{array}{lll}
\displaystyle \beta' \min\{\mu_u^{2},\mu_*^2\}||\vec{\Phi}||_{2}||\vec{\Psi}||_{2}
&\leq& \beta ||\Phi
_{h}||_{X_{\varepsilon ,h}}||\Psi _{h}||_{X_{\varepsilon ,h}}\leq C_{\eps%
}(\Phi _{h},\Psi _{h})\\[3mm]
 &=& \vec{\Psi}\cdot \mathbb{A}^{\varepsilon }\vec{\Phi} \leq \displaystyle ||\vec{\Psi}||_{2}||\mathbb{A}^{\varepsilon }\vec{\Phi}||_{2}\,,
\end{array}
\end{equation*}
with an $\eps$-independent constant $\beta>\beta'>0$. This simplifies to 
$$
\beta' \min\{\mu_u^{2},\mu_*^2\}||\vec{\Phi}||_{2}\leq
||\mathbb{A}^{\varepsilon }\vec{\Phi}||_{2}\,,
$$ 
or equivalently 
$$\beta' \min\{\mu_u^{2},\mu_*^2\}
||(\mathbb{A}^{\varepsilon })^{-1}\vec{\Phi}||_{2}\leq ||\vec{\Phi}||_{2}\,, \quad \forall\,\vec{\Phi}\in \mathbb{R}^{N}\,.
$$
Thus, the condition number can be estimated as
\begin{equation}\label{ESTii}
cond_{2}(\mathbb{A}^{\varepsilon })=||\mathbb{A}^{\varepsilon }||_{2}||(\mathbb{A}^{\varepsilon
})^{-1}||_{2}\leq \frac{M\max (\nu_u^2, \nu_{\ast}^2 +  \nu_q^2)}{\beta' \min\{\mu_u^{2},\mu_*^2\}}\,,
\end{equation}
which is an $\eps$-independent bound.
\end{proof}

\begin{remark}\label{CondEst}
Let us try to be more quantitative in our estimate of $cond_{2}(\mathbb{A}^{\varepsilon
})$. In what follows, the
symbols $\lesssim $ and $\sim$ will hide the constants of order 1, independent of the mesh. 
Consider the standard finite element setting: the bases of $V_{h}$ and $%
L_{h}$ are formed by the hat finite element functions on a quasi-uniform
mesh. We know in this case that $||u_{h}||_{L^{2}}^{2}\sim h^{2}||\vec{u}%
||_{2}^{2}$ and $|u_{h}|_{\mathcal{V}}\le C_Ih^{-1} ||u_{h}||_{L^{2}}$ by
the inverse inequality with a constant $C_I>0$ that depends only on the mesh regularity \cite{ern}. We also recall the Poincar\'e inequality $||u_{h}||_{L^{2}}\le C_P |u_{h}|_{\mathcal{V}}$. The same holds for $q_{h}$ and leads to
\begin{equation*}
\mu _{u}\sim \mu _{q}\sim h\text{ and }\nu _{u}\sim \nu _{q}\sim 1\,.
\end{equation*}%
We also have $|q_{h}|_{\ast h }\le |q_{h}|_{||}$, hence $\nu _{\ast
}\le \nu _{q}$. Moreover, for any $q_h\in L_h$ we prove, using the
inverse and Poincar\'e inequalities, that
\begin{eqnarray*}
|q_{h}|_{\ast h} &\geq &\frac{a_{\Vert }(q_{h},q_{h})}{|q_{h}|_{%
\mathcal{V}}}=\frac{|q_{h}|_{||}^{2}}{\left( |q_{h}|_{\perp
}^{2}+|q_{h}|_{||}^{2}\right) ^{1/2}}\geq \frac{|q_{h}|_{||}^{2}}{\left(
C_{I}^{2}h^{-2}||q_{h}||_{L^{2}}^{2}+|q_{h}|_{||}^{2}\right) ^{1/2}} \\
&=&\frac{C_{P}^{2}C_{I}^{2}h^{-2}|q_{h}|_{||}^{2}+|q_{h}|_{||}^{2}}{%
(C_{P}^{2}C_{I}^{2}h^{-2}+1)\left(
C_{I}^{2}h^{-2}||q_{h}||_{L^{2}}^{2}+|q_{h}|_{||}^{2}\right) ^{1/2}}\geq 
\frac{\left( C_{I}^{2}h^{-2}||q_{h}||_{L^{2}}^{2}+|q_{h}|_{||}^{2}\right)
^{1/2}}{C_{P}^{2}C_{I}^{2}h^{-2}+1} \\
&\geq& \frac{\left( C_{I}^{2}h^{-2}+C_{P}^{2}\right) ^{1/2}}{%
C_{P}^{2}C_{I}^{2}h^{-2}+1}||q_{h}||_{L^{2}}\sim h||q_{h}||_{L^{2}} \sim
h^{2}||\vec{q}||_{2}\,.
\end{eqnarray*}%
This implies $\mu _{\ast }\gtrsim h^{2}$, so that (\ref{ESTii}) becomes finally%
\begin{equation*}
cond_{2}(\mathbb{A}^{\varepsilon })\lesssim \frac{1}{h^{4}}\,.
\end{equation*}
\end{remark}

\begin{theorem}\label{thm:Err_est}
\textbf{($h$-Convergence)} Let $k\geq 1$ and $V_{h}\subset {\mathcal{V}}$ be
the $\mathbb{P}_{k}$ (or $\mathbb{Q}_{k}$) finite element space on a regular
mesh ${\mathcal{T}}_{h}$. Suppose moreover that problem (\ref{Pa_bis}) has a
solution $(u^{\eps},q^{\eps})\in {\mathcal{X}}_{\eps}$, having the
regularity $u^{\eps}\in H^{k+1}(\Omega )$, $q^{\eps}\in H^{k+1}(\Omega )$.
Then one has the estimate
\begin{equation}
|u^{\eps}-u_{h}^{\eps}|_{\mathcal{V}}\leq c\,h^{k}(|u^{\eps}|_{H^{k+1}}+|q^{\eps%
}|_{H^{k+1}})\,,  \label{Err_est}
\end{equation}%
with a constant $c>0$ that depends neither on the mesh, nor on $\eps$.
\end{theorem}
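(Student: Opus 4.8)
The plan is to run the usual conforming-Galerkin quasi-optimality argument, carried out in the mesh-dependent norm $\|\cdot\|_{X_{\eps,h}}$ for which the $\eps$- and $h$-robust discrete inf-sup condition \eqref{infsuph} is available. First I would fix a Lagrange (or Scott--Zhang) interpolant $v_h := I_h u^\eps \in V_h$ and $w_h := I_h q^\eps \in L_h$, which satisfy the standard estimates $|u^\eps - v_h|_\mathcal{V} \le c\,h^k|u^\eps|_{H^{k+1}}$ and $|q^\eps - w_h|_\| \le c\,h^k|q^\eps|_{H^{k+1}}$. By the triangle inequality together with $|v_h - u^\eps_h|_\mathcal{V} \le \|(v_h-u^\eps_h,\,w_h-q^\eps_h)\|_{X_{\eps,h}}$, it suffices to estimate $\|(v_h-u^\eps_h,\,w_h-q^\eps_h)\|_{X_{\eps,h}}$. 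Since this pair lies in $X_h$, the discrete inf-sup \eqref{infsuph} gives
\[
\beta\,\|(v_h-u^\eps_h,\,w_h-q^\eps_h)\|_{X_{\eps,h}} \le \sup_{(z_h,r_h)\in X_h\setminus\{0\}}\frac{C_\eps\big((v_h-u^\eps_h,\,w_h-q^\eps_h),(z_h,r_h)\big)}{\|(z_h,r_h)\|_{X_{\eps,h}}},
\]
and, the discretization being conforming, the Galerkin orthogonality $C_\eps\big((u^\eps-u^\eps_h,\,q^\eps-q^\eps_h),(z_h,r_h)\big)=0$ for all $(z_h,r_h)\in X_h$ lets us replace the discrete solution by the exact one. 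Thus everything reduces to bounding $C_\eps\big((u^\eps-v_h,\,q^\eps-w_h),(z_h,r_h)\big)$ by $c\,h^k(|u^\eps|_{H^{k+1}}+|q^\eps|_{H^{k+1}})\,\|(z_h,r_h)\|_{X_{\eps,h}}$.

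Expanding, $C_\eps\big((u^\eps-v_h,\,q^\eps-w_h),(z_h,r_h)\big) = a(u^\eps-v_h,z_h)+(1-\eps)a_\|(q^\eps-w_h,z_h)+a_\|(u^\eps-v_h,r_h)-\eps a_\|(q^\eps-w_h,r_h)$. The first term is $\le |u^\eps-v_h|_\mathcal{V}|z_h|_\mathcal{V}$, the second $\le |q^\eps-w_h|_\||z_h|_\mathcal{V}$ (using $|z_h|_\|\le|z_h|_\mathcal{V}$), and the fourth $\le (\sqrt\eps\,|q^\eps-w_h|_\|)(\sqrt\eps\,|r_h|_\|)$; since $|z_h|_\mathcal{V}$ and $\sqrt\eps\,|r_h|_\|$ are both bounded by $\|(z_h,r_h)\|_{X_{\eps,h}}$ and $\sqrt\eps\le1$, these three are controlled by $c\,h^k(|u^\eps|_{H^{k+1}}+|q^\eps|_{H^{k+1}})\,\|(z_h,r_h)\|_{X_{\eps,h}}$ via the interpolation estimates above. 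The delicate term is the cross term $a_\|(u^\eps-v_h,r_h)$: a plain Cauchy--Schwarz bound by $|u^\eps-v_h|_\||r_h|_\|$ is useless, since $|r_h|_\|$ is only controlled as $\eps^{-1/2}\|(z_h,r_h)\|_{X_{\eps,h}}$ and this would cost a spurious factor $\eps^{-1/2}$. The key manipulation --- and precisely the reason the $h$-dependent norm $|\cdot|_{*h}$ was introduced --- is to recombine this term with the fourth one using the exact constraint: testing the second line of \eqref{Pa_bis} against $r_h\in L_h\subset\mathcal{L}_{in}$ gives $a_\|(u^\eps,r_h)=\eps\,a_\|(q^\eps,r_h)$, whence
\[
a_\|(u^\eps-v_h,r_h)-\eps\,a_\|(q^\eps-w_h,r_h)=a_\|(\eps w_h-v_h,\,r_h),
\]
with $\eps w_h-v_h=I_h(\eps q^\eps-u^\eps)\in V_h$.

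The heart of the proof is then the $\eps$-uniform $O(h^k)$ estimate of $a_\|(\eps w_h-v_h,r_h)$. The point is that $\eps q^\eps-u^\eps$ is constant along the field lines --- this is exactly the defining relation $\nabla_\| q^\eps=\eps^{-1}\nabla_\| u^\eps$ from \eqref{qdef} --- so that $\nabla_\|(\eps w_h-v_h)=\nabla_\|\big(I_h(\eps q^\eps-u^\eps)-(\eps q^\eps-u^\eps)\big)$ is purely an interpolation-error term, which yields $|\eps w_h-v_h|_\|\le c\,h^k(|u^\eps|_{H^{k+1}}+|q^\eps|_{H^{k+1}})$; on the other hand, since $\eps w_h-v_h\in V_h$ one has $a_\|(\eps w_h-v_h,r_h)\le|r_h|_{*h}\,|\eps w_h-v_h|_\mathcal{V}$. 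Balancing the smallness of the parallel seminorm against this last bound --- via an $a_\|$-orthogonal splitting of $\eps w_h-v_h$ along $L_h$ and the discrete inverse and Poincar\'e inequalities already used in Remark \ref{CondEst} --- one should obtain $a_\|(\eps w_h-v_h,r_h)\le c\,h^k(|u^\eps|_{H^{k+1}}+|q^\eps|_{H^{k+1}})\,(|r_h|_{*h}^2+\eps|r_h|_\|^2)^{1/2}\le c\,h^k(\cdots)\,\|(z_h,r_h)\|_{X_{\eps,h}}$. I expect this step to be the main obstacle: $|r_h|_\mathcal{V}$ (equivalently $|r_h|_\perp$) is not controlled by $\|(z_h,r_h)\|_{X_{\eps,h}}$, so no crude estimate is affordable here, and it is exactly the interplay between the field-line structure of $\eps q^\eps-u^\eps$, the $h$-dependent norm $|\cdot|_{*h}$, and the discrete inequalities of Remark \ref{CondEst} that has to be handled carefully. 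Once this is done, collecting the four bounds gives $\|(v_h-u^\eps_h,\,w_h-q^\eps_h)\|_{X_{\eps,h}}\le c\,h^k(|u^\eps|_{H^{k+1}}+|q^\eps|_{H^{k+1}})$, and the estimate \eqref{Err_est} follows from the triangle inequality and $|u^\eps-I_h u^\eps|_\mathcal{V}\le c\,h^k|u^\eps|_{H^{k+1}}$.
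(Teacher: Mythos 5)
Your skeleton --- nodal interpolants, the triangle inequality, the discrete inf-sup condition \eqref{infsuph} in the $\|\cdot\|_{X_{\eps,h}}$-norm, Galerkin orthogonality, and a final bound on $C_\eps$ applied to the interpolation error --- is exactly the paper's argument, and your preliminary manipulations (in particular the identity $a_\|(u^\eps-v_h,r_h)-\eps a_\|(q^\eps-w_h,r_h)=a_\|(\eps w_h-v_h,r_h)$ with $\eps w_h-v_h=I_h(\eps q^\eps-u^\eps)$ and $\nabla_\|(\eps q^\eps-u^\eps)=0$) are correct. The gap is that the one estimate on which everything hinges, namely $a_\|(\eps w_h-v_h,r_h)\leq c\,h^k(|u^\eps|_{H^{k+1}}+|q^\eps|_{H^{k+1}})\,(|r_h|_{*h}^2+\eps|r_h|_\|^2)^{1/2}$, is never proved: you only record that $|\eps w_h-v_h|_\|\leq c\,h^k(|u^\eps|_{H^{k+1}}+|q^\eps|_{H^{k+1}})$ while $|\eps w_h-v_h|_{\mathcal V}=O(1)$, and then write that ``one should obtain'' the bound by an $a_\|$-orthogonal splitting plus the inverse and Poincar\'e inequalities of Remark \ref{CondEst}. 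The two elementary bounds actually available give $|\eps w_h-v_h|_\|\,|r_h|_\|\lesssim h^k\eps^{-1/2}$ and $|r_h|_{*h}\,|\eps w_h-v_h|_{\mathcal V}\lesssim|r_h|_{*h}$ respectively; neither is uniformly $O(h^k)$, their geometric mean is not either, and the route through Remark \ref{CondEst} costs inverse-inequality factors of order $h^{-2}$ (cf.\ $\mu_*\gtrsim h^2$ there), which would degrade $h^k$ to $h^{k-2}$. Since the entire content of the theorem is the $\eps$-uniformity of the constant, what is left unproven is precisely the theorem.

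For comparison, the paper does not perform your recombination at all: it bounds $C_\eps((u^\eps-\hat u_h^\eps,q^\eps-\hat q_h^\eps),(v_h,w_h))$ in one step by $c\,(|u^\eps-\hat u_h^\eps|_{\mathcal V}^2+|q^\eps-\hat q_h^\eps|_{*h}^2+\eps|q^\eps-\hat q_h^\eps|_\|^2)^{1/2}$, i.e.\ it invokes the continuity of $C_\eps$ with the interpolation error measured in the (extended) discrete norm, and then concludes via $|q^\eps-\hat q_h^\eps|_{*h}\leq|q^\eps-\hat q_h^\eps|_\|$ and the standard interpolation estimates. Your instinct that the cross term $a_\|(u^\eps-\hat u_h^\eps,w_h)$ is the delicate one is sound --- with a non-discrete first argument one cannot appeal to $|w_h|_{*h}$, and $|w_h|_\|$ is controlled by $\|v_h,w_h\|_{X_{\eps,h}}$ only up to a factor $\eps^{-1/2}$ --- so a complete argument must either justify the paper's continuity bound for exactly that term or supply a different mechanism. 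As it stands, your text is a plan whose central step is identified and acknowledged but not carried out, not a proof.
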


\begin{proof}
Let $\hat{u}_{h}^{\eps}\in V_{h}$ and \ $\hat{q}_{h}^{\eps}\in L_{h}$ be the
standard nodal interpolant of $u^{\eps}$ and $q^{\eps}$ satisfying \cite{ern}%
\begin{equation*}
|u^{\eps}-\hat{u}_{h}^{\eps}|_{H^1}\leq c\,h^{k}|u^{\eps}|_{H^{k+1}}\text{
and \ }|q^{\eps}-\hat{q}_{h}^{\eps}|_{H^{1}}\leq c\,h^{k}|q^{\eps%
}|_{H^{k+1}}\,.
\end{equation*}%
We can now derive the error estimates in the $H^{1}$-norm for $u^{\eps}$ in
the way similar to Cea's lemma: by the inf-sup property, there exists $%
(v_{h},w_{h})\in X_{h}$ with $\|v_{h},w_{h}\|_{X_{\eps,h}}=1$ such that (with some $0<\beta'<\beta$)
\begin{eqnarray} \label{noqh}
|u_{h}^{\eps}-\hat{u}_{h}^{\eps}|_{H^{1}} &\leq &\|u_{h}^{\eps}-\hat{u}_{h}^{%
\eps},q_{h}^{\eps}-\hat{q}_{h}^{\eps}\|_{X_{\eps,h}}\leq \frac{1}{\beta' }C_{%
\eps}((u_{h}^{\eps}-\hat{u}_{h}^{\eps},q_{h}^{\eps}-\hat{q}_{h}^{\eps%
}),(v_{h},w_{h})) \\
&=&\frac{1}{\beta' }C_{\eps}((u^{\eps}-\hat{u}_{h}^{\eps},q^{\eps}-\hat{q}%
_{h}^{\eps}),(v_{h},w_{h})) \notag\\
&\leq& c\,(|u^{\eps}-\hat{u}_{h}^{\eps}|_{\mathcal{V%
}}^{2}+|q^{\eps}-\hat{q}_{h}^{\eps}|_{\ast h}^{2}+\eps|q^{\eps}-\hat{q}_{h}^{%
\eps}|_{\Vert }^{2})^{1/2} 
\notag\\[2mm]
&\leq &c\,(|u^{\eps}-\hat{u}_{h}^{\eps}|_{H^{1}}+|q^{\eps}-\hat{q}_{h}^{\eps%
}|_{H^{1}})\,,
\notag
\end{eqnarray}%
since%
\begin{equation*}
|q^{\eps}-\hat{q}_{h}^{\eps}|_{\ast h}=\sup_{v_{h}\in {V}_{h}}\frac{a_{\Vert
}(q^{\eps}-\hat{q}_{h}^{\eps},v_{h})}{|v_{h}|_{\mathcal{V}}}\leq
|q^{\eps}-\hat{q}_{h}^{\eps}|_{\Vert }\leq |q^{\eps}-\hat{q}_{h}^{\eps%
}|_{H^{1}}.
\end{equation*}%
We can now employ the interpolation error estimates to conclude.
\end{proof}

\begin{remark} \label{Rem9}
The error estimate (\ref{Err_est}) would be of course useless if the norms $%
|u^{\eps}|_{H^{k+1}}$, $|q^{\eps}|_{H^{k+1}}$ were $\eps-$dependent and
exploding in the limit $\eps\rightarrow 0$. Fortunately, it is not the case.
We expect indeed that $|u^{\eps}|_{H^{k+1}}$ is bounded uniformly in $\eps$
by the norm of $f$ in $H^{k-1}(\Omega )$ and $|q^{\eps}|_{H^{k+1}}$ is
bounded uniformly in $\eps$ by the norm of $f$ in $H^{k+1}(\Omega )$. This
can be easily proved in the case of a simple aligned geometry, see Appendix
A. We conjecture that this remains true also in a general setting. 
\end{remark}

\begin{remark}
If we do not omit the norm of $q^\eps_h-\hat{q}^\eps_h$ in the left-hand side of the first inequality in (\ref{noqh}), we also get an error estimate for $q^\eps_h$
$$
|q^{\eps}-q_{h}^{\eps}|_{||}\leq c\frac{h^{k}}{\sqrt\eps}(|u^{\eps}|_{H^{k+1}}+|q^{\eps}|_{H^{k+1}})\,,
$$
which degenerates as $\eps$ goes to 0. We are not sure, if this estimate is sharp, but we recall that $q^\eps$ is an auxiliary variable, without any intrinsic meaning.
\end{remark}

\color{black} 

\section{Second AP-reformulation for general field lines}

\label{secStab} 
The fundamental idea of the AP-reformulation introduced in Section \ref{SEC2}
is the introduction of a Lagrange multiplier $q^{\eps}\in {\mathcal{L}}_{in}$
in order to handle well with the constraint $\nabla _{\Vert }u^{0}=0$ in the
limit $\eps\rightarrow 0$. This Lagrange multiplier was uniquely determined
up to a constant on the field lines, which was fixed by imposing $q_{|\Gamma
_{in}}^{\eps}=0$. The disadvantage of this scheme is that it requires to
identify the inflow part of the boundary, which can be cumbersome in
practice or even not possible if some of the field lines are closed and lie
completely inside the domain $\Omega $. It is thus tempting to abandon the
zero inflow boundary condition and to search for the auxiliary $q^{\eps}$-variable
in the Hilbert space 
\begin{equation}
\mathcal{L}=\{\xi\in L^{2}(\Omega )~/~\nabla _{{||}}\xi\in L^{2}(\Omega )\}\,,\quad (u,v)_{\mathcal{L}}:=(u,w)+(\nabla_{||}u,\nabla_{||}w )\,.
\label{Lspace}
\end{equation}%
The problem with this idea is that we loose now uniqueness of the solution
if we attempt to implement the AP-reformulation \eqref{Pa} just changing $\mathcal{L}%
_{in}$ $\ $to $\mathcal{L}$. To circumvent this difficulty, it was proposed
in \cite{NarskiOttaviani} to introduce a stabilization term into the AP
reformulation so that it becomes: Find $(u^{\eps,\sigma },\xi^{\eps,\sigma
})\in \mathcal{V}\times \mathcal{L}$ such that%
\begin{equation}
(AP_{\mathcal{S}})^{\eps,\sigma }\,\,\,\left\{ 
\begin{array}{ll}
\ds a(u^{\eps,\sigma },v)+(1-\eps)a_{\Vert }(\xi ^{\eps,\sigma },v)=(f,v), & 
\quad \forall v\in \mathcal{V} \\[3mm] 
\ds a_{\Vert }(u^{\eps,\sigma },w)-\eps a_{\Vert }(\xi ^{\eps,\sigma
},w)-\sigma (\xi ^{\eps,\sigma },w)=0, & \quad \forall w\in \mathcal{L}\,,%
\end{array}%
\right.  \label{APstab}
\end{equation}%
where $\sigma >0$ is a small stabilization parameter, chosen consistently
with the overall discretization error. It is this term which permits to have the uniqueness, as will be shown in Lemma \ref{LemContSig}.

In the limit $\eps\rightarrow0$ this system yields: Given $\sigma>0$,
find $(u^{0,\sigma},\xi^{0,\sigma})\in\mathcal{V}\times\tilde{\mathcal{L}}^{2}$,
solution to 
\begin{equation}
(L_{\mathcal{S}})^{\sigma}\,\,\,\left\{ \begin{array}{ll}
\ds a(u^{0,\sigma},v)+a_{\Vert}(\xi^{0,\sigma},v)=(f,v), & \quad\forall v\in\mathcal{V}\\[3mm]
\ds a_{\Vert}(u^{0,\sigma},w)-\sigma(\xi^{0,\sigma},w)=0, & \quad\forall w\in\tilde{\mathcal{L}}^2 \,,
\end{array}\right.\label{Lstab}
\end{equation}
where $\mathcal{\tilde{L}}^2$ is, loosely speaking, the
closure of $\mathcal{L}$ in the $|\cdot|_{\ast}$ semi-norm (\ref{S_norm}) intersected with $L^{2}(\Omega)$,
i.e. 
\[
 \tilde{\mathcal{L}}^2=\left\{ \xi\in L^2(\Omega)\,/\,\sup_{v\in\mathcal{V}}\frac{a_{\|}(\xi,v)}{|v|_{\mathcal{V}}}<\infty\right\} \frac{_{}}{}_{}
\]
This space is a Hilbert-space associated with the scalar product
$$
 (u,w)_{\tilde{\mathcal{L}}^2}:=(u,w)+(u^*,w^*)_{L^2}\,, \quad \forall (u,w) \in \tilde{\mathcal{L}}^2\,, 
$$
where $u^*$ resp. $w^*$ are the unique solutions of \eqref{eq1} corresponding to $u$ resp. $w$. 
We need this special space, first of all, to be able to treat the limit-problem $(L_{\mathcal S})^\sigma$ with the inf-sup theory, similar to the inflow-case, and also in order to be able to define the stabilization term $\sigma(\xi^{0,\sigma},w)$.

\begin{remark}
Remark also that we have $\tilde{\mathcal{L}}^2\neq\mathcal{L}$.
Let us prove it in the following simple setting: let $\Omega=(0,\pi)\times(0,\pi)$,
$A_{\|}=1$, $A_{\perp}=Id$, $b=e_{2}$. For any $q=\sum_{k,l=1}^{\infty}q_{kl}\sin kx\cos ly$,
the calculation as in Remark 3 gives 
\[
|q|_{\ast}^{2}=\sum_{k,l=1}^{\infty}\frac{l^{4}}{k^{2}+l^{2}}|q_{kl}|^{2}\,,
\]
so that taking $q_{kl}$ such that $q_{kl}=\frac{1}{l}$ if $k=l^{2}$ and $q_{kl}=0$ for any $k\neq l^{2}$ we have
\[
|q|_{\ast}^{2}=\sum_{l=1}^{\infty}\frac{l^{2}}{l^{4}+l^{2}}<\infty\,,
\]
so that $q\in\tilde{\mathcal{L}}$. Moreover, clearly $q\in L^{2}(\Omega)$.
However, 
\[
|q_{}|_{\|}^{2}=\sum_{k,l=1}^{\infty}l^{2}|q_{kl}|^{2}=\infty\,.
\]
\end{remark}


\subsection{Mathematical analysis on the continuous level}

To analyze the well-posedness of problem (\ref{APstab}) ans its asymptotic limit behaviour for $\eps \rightarrow 0$, we shall rewrite it in an equivalent manner, better suited for mathematical studies. For this, we observe
first that the second equation in (\ref{APstab}) gives for all $\varepsilon
,\sigma >0$ 
\begin{equation*}
(\xi ^{\eps,\sigma },w)=0,\quad \forall w\in \mathcal{G_L}\,, \quad \textrm{with} \quad \mathcal{G_L}:=\{v\in \mathcal{L}\,|\,\nabla _{\|}v=0\}\,,
\end{equation*}%
which means that $\xi ^{\eps,\sigma }$ belongs to the following space%
\begin{equation}
\mathcal{A}=\{\xi \in \mathcal{L}~/~(\xi ,w)=0\quad \forall w\in \mathcal{G_L}%
\}\,,  \label{Aspace}
\end{equation}%
which consists thus of functions from $\mathcal{L}$ with zero (weighted)
average along the field lines. 
Remark that one has the decomposition $\mathcal{L}=\mathcal{G_L} \oplus ^\perp \mathcal{A}$. Problem (\ref{APstab}) can be
hence rewritten as: Find $(u^{\eps,\sigma },\xi^{\eps,\sigma })\in \mathcal{V}%
\times \mathcal{A}$ such that%
\begin{equation}
(AP_{\mathcal{S}}^{\prime })^{\eps,\sigma }\,\,\,\left\{ 
\begin{array}{ll}
\ds a(u^{\eps,\sigma },v)+(1-\eps)a_{\Vert }(\xi ^{\eps,\sigma },v)=(f,v), & 
\quad \forall v\in \mathcal{V} \\[3mm] 
\ds a_{\Vert }(u^{\eps,\sigma },w)-\eps a_{\Vert }(\xi ^{\eps,\sigma
},w)-\sigma (\xi ^{\eps,\sigma },w)=0, & \quad \forall w\in \mathcal{A}\,.%
\end{array}%
\right.  \label{APstab1}
\end{equation}%
We emphasize that this reformulation is completely equivalent to (\ref{APstab}) for all $\eps>0$ and $\sigma >0$ and is done solely for the purposes of mathematical analysis. The formulation  used for the
numerical discretization will be (\ref{APstab}).

Note that $\mathcal{A}$ becomes a Hilbert space when equipped with the scalar product $a_{||} (\cdot,\cdot)$ and corresponding norm $|\cdot
|_{\Vert }$. Indeed, if $\xi \in \mathcal{A}$ and $|\xi |_{\Vert }=0$ then $%
\xi \in \mathcal{G_L}$ which implies $\xi =0$ since $\mathcal{A}$ is
orthogonal to $\mathcal{G_L}$. For the same reasons, the semi-norm $|\cdot|_{\ast}$ (\ref{S_norm}) is actually a norm when applied to $\mathcal{A}$. We can thus introduce the closure $\tilde{\mathcal{A}}$ of $\mathcal{A}$ with respect to $|\cdot|_*$, needed as usual, for the $\eps \rightarrow 0$ limit model. The $(L_{\mathcal{S}})^{\sigma }$-problem will be shown to be equivalent to: Find $(u^{0,\sigma},\xi^{0,\sigma})\in\mathcal{V}\times(\tilde{\mathcal{A}}\cap L^2(\Omega))$,
solution to 
\begin{equation}
(L'_{\mathcal{S}})^{\sigma}\,\,\,\left\{ \begin{array}{ll}
\ds a(u^{0,\sigma},v)+a_{\Vert}(\xi^{0,\sigma},v)=(f,v), & \quad\forall v\in\mathcal{V}\\[3mm]
\ds a_{\Vert}(u^{0,\sigma},w)-\sigma(\xi^{0,\sigma},w)=0, & \quad\forall w\in\tilde{\mathcal{A}}\cap L^2(\Omega) \,.
\end{array}\right.\label{Lstab1}
\end{equation}

As mentioned earlier, formulations \eqref{APstab1} and \eqref{Lstab1} are better adapted for the mathematical study, then the completely equivalent ones \eqref{APstab} and \eqref{Lstab}. In particular in the limit $\sigma \rightarrow 0$, they permit to get the following problems: Find $(u^{%
\eps},\xi ^{\eps})\in {\mathcal{V}}\times {\mathcal{A}}$ solution of 
\begin{equation}
(AP_{\mathcal{A}})^{\eps}\,\,\,\left\{ 
\begin{array}{ll}
\ds a(u^{\eps},v)+(1-\eps)a_{\Vert }(\xi ^{\eps},v)=(f,v), & \quad \forall
v\in \mathcal{V} \\[3mm] 
\ds a_{\Vert }(u^{\eps},w)-\eps a_{\Vert }(\xi ^{\eps},w)=0, & \quad \forall
w\in \mathcal{A}\,.%
\end{array}%
\right.  \label{APA}
\end{equation}%
which is equivalent to the original problem (\ref{PPP}) and hence also to the inflow AP-reformulation \eqref{Pa}. In the present case, we fix the Lagrangian variable $\xi ^{\eps}$ by requiring zero average along the field lines, {\it i.e.} $\xi ^{\eps}\in {\mathcal{A}}$, in the former case we fixed the corresponding Lagrangian variable $q^\eps$ by setting $q^\eps$ zero on the inflow boundary $\Gamma_{in}$, {\it i.e.} $q ^{\eps}\in {\mathcal{L}}_{in}$. Note that we do not want here to discretize the space $\mathcal{A}$ directly.  This space arises only in the limit $\sigma\to 0$, which is never performed in practice when one implements the scheme of this paper.  On the contrary, the scheme from  \cite{DDLNN} relies on a direct discretization of $\mathcal{A}$ which results in a rather complicated  numerical method. Remark also that we abandoned in (\ref{APA}) the requirement that the $\xi$-variable has to belong to $L^2(\Omega)$, as there is no more need, for $\sigma=0$.\\

Letting now formally $\eps \rightarrow 0$ in  \eqref{APA}, we  obtain the
problem: Find $(u^{0},\xi ^{0})\in {\mathcal{V}}\times \tilde{\mathcal{A}}$ such that
\begin{equation}
(L_{\mathcal{A}})\,\,\,\left\{ 
\begin{array}{ll}
\ds a(u^{0},v)+a_{\Vert }(\xi ^{0},v)=(f,v), & \quad \forall v\in \mathcal{V}
\\[3mm] 
\ds a_{\Vert }(u^{0},w)=0, & \quad \forall w\in \tilde{\mathcal{A}}\,,%
\end{array}%
\right.  \label{LA}
\end{equation}%
which is an equivalent (saddle-point) reformulation of the original limit problem (%
\ref{LL}).\\

For the reader convenience, we draw in Figure \ref{ima1} a scheme, with all the problems we introduced so far, and their relations. In the following Lemmata and Theorems, we shall prove some of these relations and convergences, adapting the results from the previous section \ref{SEC22} to the present case containing two parameters, $\eps$ and $\sigma$. 
\begin{figure}[h] 
\begin{center} 
\input{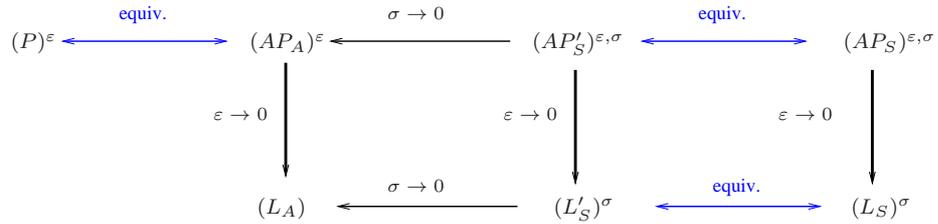}
\end{center} 
\caption{\label{ima1} Stabilized reformulations of the original problem $(P)^\eps$.} 
\end{figure}

\begin{lemma}
\label{infsupsig} \textbf{(Inf-Sup condition)} Let $V$, $L$, $\tilde{L}$, $\hat L$ be Hilbert spaces 
such that $L\subset\tilde{L}$ and $L\subset\hat L$ with continuous inclusions and  $\|\xi\|_{\tilde{L}}\leq \|\xi\|_L$ for all $\xi\in L$.
Let $a(\cdot ,\cdot )$, $c(\cdot ,\cdot )$, $d(\cdot ,\cdot )$ denote the scalar products on respectively $V$, $\tilde{L}$, $\hat L$
and $b(\cdot ,\cdot ):\tilde{L}\times V\rightarrow \mathbb{R}$
be a bilinear form   satisfying 
$\|b(\xi,v)\|\leq \|v\|_{V}\|\xi\|_{\tilde{L}}$
for all $v\in V$, $\xi\in L$ as well as the inf-sup condition
\begin{equation}\label{infsup_b_bis}
\inf_{\xi\in \tilde{L}}\sup_{v\in V}\frac{b(\xi,v)}{\|\xi\|_{\tilde{L}}\|v\|_{V}}%
=\alpha >0.
\end{equation}%
Define furthermore the Hilbert space 
$X_{\eps,\sigma }$ for $\eps\geq 0$, $\sigma \geq 0$ by 
\begin{equation*}
X_{\eps,\sigma }:=\left\{ 
\begin{array}{l}
\ds V\times L\text{, if }\eps>0,\sigma \geq 0 \\[2mm] 
\ds V\times (\tilde{L}\cap \hat{L})\text{, if }\eps=0,\sigma >0 \\[2mm] 
\ds V\times \tilde{L}\text{, if }\eps=0,\sigma =0%
\end{array}%
\right.\,,
\end{equation*}%
and equip it with the norm $\|u,\xi \|_{X_{\eps,\sigma }}:=(\|u\|_{V}^{2}+\|\xi \|_{%
\tilde{L}}^{2}+\eps\|\xi \|_{L}^{2}+\sigma \|\xi \|_{\hat{L}}^{2})^{1/2}$. 

For any $\eps \ge 0$ and $\sigma \geq 0$ let $C_{\eps,\sigma }:X_{\eps,\sigma}\times X_{\eps,\sigma}\rightarrow \mathbb{R}$
be the bilinear form  defined by 
\begin{equation*}
C_{\eps,\sigma}((u,\xi ),(v,w))=a(u,v)+(1-\eps)b(\xi ,v)+b(w,u)-\eps c(\xi ,w)-\sigma
d(\xi ,w)\,.
\end{equation*}%
Then $C_{\eps,\sigma}$ is continuous and satisfies the inf-sup condition 
\begin{equation}
\inf_{(u,\xi )\in {{X}_{\eps{,\sigma }}}}\sup_{(v,w)\in {{X}_{\eps,\sigma }}}%
\frac{C_{\eps,\sigma }((u,\xi ),(v,w))}{\|u,\xi \|_{{{X}_{\eps,\sigma }}}\|v,w\|_{{%
{X}_{\eps,\sigma }}}}\geq \beta \,,  \label{infsup-abs}
\end{equation}%
with a constant $\beta>0 $ that depends only on $\alpha $.
\end{lemma}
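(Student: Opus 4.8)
The plan is to follow the blueprint of the proof of Lemma \ref{infsup}, but now carrying the extra stabilization term $-\sigma d(\xi,w)$ and the extra norm contribution $\sigma\|\xi\|_{\hat L}^2$ through the estimates. Fix an arbitrary $(u,\xi)\in X_{\eps,\sigma}$ and set
$$
Z:=\sup_{(v,w)\in X_{\eps,\sigma}}\frac{C_{\eps,\sigma}((u,\xi),(v,w))}{\|v,w\|_{X_{\eps,\sigma}}}\,,
$$
so that the goal is $Z\geq\beta\|u,\xi\|_{X_{\eps,\sigma}}$. First I would test with $(v,0)$: since $\|v,0\|_{X_{\eps,\sigma}}=\|v\|_V$, the inf-sup hypothesis \eqref{infsup_b_bis} on $b$ together with $C_{\eps,\sigma}((u,\xi),(v,0))=a(u,v)+(1-\eps)b(\xi,v)$ gives
$$
(1-\eps)\alpha\|\xi\|_{\tilde L}\leq Z+\|u\|_V\,,
$$
exactly as in the unstabilized case; the stabilization term does not appear here because $w=0$.

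Next I would test with $(v,w)=(u,-\xi)$. Here the new term contributes $+\sigma d(\xi,\xi)=\sigma\|\xi\|_{\hat L}^2$, so
$$
C_{\eps,\sigma}((u,\xi),(u,-\xi))=a(u,u)-\eps b(\xi,u)+\eps c(\xi,\xi)+\sigma\|\xi\|_{\hat L}^2\geq(1-\tfrac\eps2)\|u\|_V^2+\tfrac\eps2\|\xi\|_L^2+\sigma\|\xi\|_{\hat L}^2\,,
$$
using Young's inequality on $\eps b(\xi,u)$ and the continuity bound $\|b(\xi,u)\|\le\|u\|_V\|\xi\|_{\tilde L}\le\|u\|_V\|\xi\|_L$ as before. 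Combining this with the square of the first inequality (multiplied by a small constant) and bounding $C_{\eps,\sigma}((u,\xi),(u,-\xi))\le Z\|u,-\xi\|_{X_{\eps,\sigma}}$ and then using Young's inequality with a free parameter $\gamma>0$ yields, for a suitable absolute constant,
$$
\tfrac14\|u\|_V^2+\tfrac\eps2\|\xi\|_L^2+\sigma\|\xi\|_{\hat L}^2+\tfrac{\alpha^2(1-\eps)^2}{8}\|\xi\|_{\tilde L}^2\leq\tfrac1{2\gamma}\|u,\xi\|_{X_{\eps,\sigma}}^2+\tfrac{1+\gamma}{2}Z^2\,.
$$

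The only genuinely new point, and the one I would treat carefully, is to show that the left-hand side dominates $c_0\|u,\xi\|_{X_{\eps,\sigma}}^2=c_0(\|u\|_V^2+\|\xi\|_{\tilde L}^2+\eps\|\xi\|_L^2+\sigma\|\xi\|_{\hat L}^2)$ for some $c_0>0$ depending only on $\alpha$. The $\|u\|_V^2$, $\eps\|\xi\|_L^2$ and $\sigma\|\xi\|_{\hat L}^2$ parts are immediate (the coefficient of $\sigma\|\xi\|_{\hat L}^2$ on the left is even $1$). The term $\|\xi\|_{\tilde L}^2$ is controlled exactly as in Lemma \ref{infsup}: split on $\eps\in[0,1/2]$, where $(1-\eps)^2\geq1/4$ makes $\tfrac{\alpha^2(1-\eps)^2}{8}\|\xi\|_{\tilde L}^2$ do the job, versus $\eps\in[1/2,1]$, where $\tfrac\eps2\|\xi\|_L^2\geq\tfrac14\|\xi\|_L^2\geq\tfrac14\|\xi\|_{\tilde L}^2$ using $\|\xi\|_{\tilde L}\le\|\xi\|_L$. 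Crucially, this case analysis is unaffected by $\sigma$, so the argument goes through uniformly in $\sigma\in[0,1]$ (and in fact for all $\sigma\ge0$, the norm being defined accordingly). Taking $\gamma$ large enough then gives $c_0\|u,\xi\|_{X_{\eps,\sigma}}^2\le\tfrac12 c_0\|u,\xi\|_{X_{\eps,\sigma}}^2+CZ^2$, hence $\|u,\xi\|_{X_{\eps,\sigma}}\le(1/\beta)Z$ with $\beta>0$ depending only on $\alpha$. Continuity of $C_{\eps,\sigma}$ is checked directly by Cauchy--Schwarz term by term, the new term giving $\sigma\|\xi\|_{\hat L}\|w\|_{\hat L}$, which is absorbed into $\|u,\xi\|_{X_{\eps,\sigma}}\|v,w\|_{X_{\eps,\sigma}}$. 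The main obstacle, such as it is, is purely bookkeeping: making sure the definition of $X_{\eps,\sigma}$ in the three regimes ($\eps>0$; $\eps=0,\sigma>0$; $\eps=0,\sigma=0$) is exactly the one on which each step above is valid — in particular that $d(\xi,\cdot)$ and hence $\sigma d(\xi,w)$ makes sense (it does, since on $X_{0,\sigma}$ with $\sigma>0$ we are in $V\times(\tilde L\cap\hat L)$), and that dropping the $\sigma$-term when $\sigma=0$ recovers Lemma \ref{infsup} verbatim.
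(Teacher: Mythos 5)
Your proposal is correct and follows essentially the same route as the paper: the paper's own proof of Lemma \ref{infsupsig} is explicitly a rerun of the proof of Lemma \ref{infsup} with the extra term $\sigma d(\xi,\xi)=\sigma\|\xi\|_{\hat L}^2$ carried along on the left-hand side, the same test pairs $(v,0)$ and $(u,-\xi)$, the same split $\eps\in[0,1/2]$ versus $\eps\in[1/2,1]$ to extract $c_0$, and the same final Young-inequality step in $\gamma$. Your additional remarks on the well-definedness of $d(\xi,w)$ in each of the three regimes of $X_{\eps,\sigma}$ are consistent with the paper's setup and do not change the argument.
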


\begin{proof} The proof of this lemma follows the same lines as that of
Lemma \ref{infsup} and we give here only a short version of it.
For any $(u,\xi )\in X{_{\eps,\sigma}}$, denoting 
\[
Z:=\sup_{(v,w)\in{{X}_{\eps,\sigma}}}\frac{C_{\eps,\sigma}((u,\xi ),(v,w))}{\|v,w\|_{{{X}_{\eps,\sigma}}}}\,,
\]
we can prove  that $
(1-\eps)\alpha\|\xi\|_{\tilde{{L}}}\leq Z+\|u\|_{{V}}$.
Now, taking $v=u$, $w=-\xi $ we observe that
\begin{eqnarray*}
C_{\eps,\sigma}((u,\xi ),(u,-\xi )) & = & a(u,u)-\eps b(\xi,u)+\eps c(\xi,\xi)+\sigma d(\xi ,\xi )\\
 & \geq & (1-\frac{\eps}{2})\|u\|_{V}^{2}+\frac{\eps}{2}\|\xi\|_{L}^{2}+\sigma\|\xi\|_{\hat{L}}^{2}\,,
\end{eqnarray*}
implying altogether 
\begin{equation*}
\frac{1}{2}\|u\|_{V}^{2}+\frac{\eps}{2}\|\xi\|_{L}^{2}+\frac{\alpha^{2}(1-\eps)^{2}}{8}\|\xi\|_{\tilde{{L}}}^{2}+\sigma\|\xi\|_{\hat{L}}^{2} 
\le
 Z\|u,-\xi\|_{X{_{\eps,\sigma}}}+\frac{1}{4}Z^{2}+\frac{1}{4}\|u\|_{V}^{2}\,.
\end{equation*}
Following again the inequalities from the proof of Lemma \ref{infsup}, we see that there exists 
a constant $c_{0}\in(0,\frac{1}{4}]$ depending only on $\alpha$ such that for any $\gamma>0$ and
$\eps\in[0,1]$ 
\[
c_{0}\|u,\xi\|_{X{_{\eps,\sigma}}}^{2}\,\leq\frac{1}{2\gamma}\|u,\xi\|_{X{_{\eps,\sigma}}}^{2}+\frac{1+\gamma}{2}Z^{2}\,.
\]
Taking finally a sufficiently big $\gamma$ yields $\|u,\xi\|_{{{X}_{\eps}}}\leq(1/\beta)Z$
with a constant $\beta>0$ depending only on $\alpha$. 
\end{proof}

\begin{lemma} \label{LemContSig} \textbf{(Existence/Uniqueness for $\eps \ge 0$ and $\sigma>0$)} Let hypothesis A be satisfied. The stabilized AP-problem
$(AP_{\mathcal{S}})^{\eps,\sigma}$ (resp. $(L_{\mathcal{S}})^{\sigma}$)
is well-posed for each $\eps\in(0,1]$ and $\sigma>0$ (resp. $\eps=0, \sigma>0$),
i.e. for any $f\in\mathcal{V}^{\prime}$ there exists a unique solution
$(u^{\eps,\sigma},\xi ^{\eps,\sigma})\in\mathcal{V}\times \mathcal{L}$
(resp.
$(u^{0, \sigma}, \xi^{0, \sigma}) \in \mathcal{V} \times \widetilde{\mathcal{L}}^2$), which satisfies
\begin{equation}
  \label{est} \|u^{\eps, \sigma}, \xi^{\eps, \sigma} \|_{\mathcal{X}_{\eps, \sigma}} \le C
  \|f\|_{\mathcal{V}'} \hspace{0.25em},
\end{equation}
with $\|u^{}, \xi^{} \|_{\mathcal{X}_{\eps, \sigma}}$:=$(|u|_{\mathcal V}^2 + |\xi|^2_{\ast}
+  \eps|\xi|_{\| }^2 + \sigma \|\xi\|_{L^2}^2)^{1 / 2}$ and some $C > 0$ independent on $\eps$ and $\sigma$. 
Moreover we have the $\eps$-convergence
$$
||u^{\eps, \sigma}-u^{0, \sigma},\xi^{\eps, \sigma}- \xi^{0, \sigma} ||_{\mathcal{X}_{\eps, \sigma}} \rightarrow 0\quad \textrm{for} \quad \eps \rightarrow 0\,.
$$
\end{lemma}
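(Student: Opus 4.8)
The plan is to obtain this lemma as a corollary of the abstract Lemma \ref{infsupsig} together with the Banach-Ne\v{c}as-Babu\v{s}ka theorem, and then to deduce the $\eps$-convergence exactly as in Theorem \ref{MainThCont}.

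First I would specialise Lemma \ref{infsupsig} to $V:=\mathcal{V}$, $L:=(\mathcal{A},|\cdot|_\|)$, $\tilde{L}:=(\tilde{\mathcal{A}},|\cdot|_*)$, $\hat{L}:=L^2(\Omega)$, with $a(\cdot,\cdot)$ the scalar product of $\mathcal{V}$ and $b:=a_\|$, $c:=a_\|$, $d:=(\cdot,\cdot)$ the $L^2$-product. The inclusions $\mathcal{A}\subset\tilde{\mathcal{A}}$ and $\mathcal{A}\subset L^2(\Omega)$ are continuous and $|\xi|_*\le|\xi|_\|$ for $\xi\in\mathcal{A}$; the continuity $|a_\|(\xi,v)|\le|v|_{\mathcal{V}}|\xi|_*$ and the inf-sup condition \eqref{infsup_b_bis} with $\alpha=1$ both follow directly from the definition \eqref{S_norm} of $|\cdot|_*$, since $\sup_{v\in\mathcal{V}}a_\|(\xi,v)/|v|_{\mathcal{V}}=|\xi|_*$. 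With these choices the space $X_{\eps,\sigma}$ of Lemma \ref{infsupsig} is exactly $\mathcal{V}\times\mathcal{A}$ for $\eps>0$ and $\mathcal{V}\times(\tilde{\mathcal{A}}\cap L^2(\Omega))$ for $\eps=0,\sigma>0$, its norm is $\|\cdot\|_{\mathcal{X}_{\eps,\sigma}}$, and $C_{\eps,\sigma}$ is the bilinear form of $(AP'_{\mathcal{S}})^{\eps,\sigma}$ (resp. of $(L'_{\mathcal{S}})^{\sigma}$). Lemma \ref{infsupsig} then gives continuity and an $\eps$- and $\sigma$-independent inf-sup constant $\beta>0$, so the BNB theorem \cite{ern} yields for each $f\in\mathcal{V}'$ a unique solution with $\|u,\xi\|_{\mathcal{X}_{\eps,\sigma}}\le\beta^{-1}\|f\|_{\mathcal{V}'}$, i.e. \eqref{est}. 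Transferring this back to the genuine formulations uses the orthogonal splitting $\mathcal{L}=\mathcal{G_L}\oplus^\perp\mathcal{A}$: for $\eps>0$ the equivalence $(AP_{\mathcal{S}})^{\eps,\sigma}\leftrightarrow(AP'_{\mathcal{S}})^{\eps,\sigma}$ on $\mathcal{V}\times\mathcal{L}$ was already noted above, while the same splitting applied to the $w$-equation of $(L_{\mathcal{S}})^{\sigma}$ forces $\xi^{0,\sigma}\perp\mathcal{G_L}$, establishing the still-pending equivalence $(L_{\mathcal{S}})^{\sigma}\leftrightarrow(L'_{\mathcal{S}})^{\sigma}$ on $\mathcal{V}\times\tilde{\mathcal{L}}^2$.

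For the $\eps$-convergence I would repeat the argument of Theorem \ref{MainThCont}. Assume first $f\in L^2(\Omega)$. Then, arguing as for $q^0$ in the inflow case (cf. \cite{DLNN}) — the $v$-equation of $(L'_{\mathcal{S}})^{\sigma}$ reads $a_\|(\xi^{0,\sigma},v)=(f,v)-a(u^{0,\sigma},v)$, a parallel-elliptic equation for $\xi^{0,\sigma}$ with data in $H^{-1}(\Omega)$, which is regular enough along the field lines — one obtains $\xi^{0,\sigma}\in\mathcal{A}$. Subtracting $(L'_{\mathcal{S}})^{\sigma}$ from $(AP'_{\mathcal{S}})^{\eps,\sigma}$ and rearranging the $\xi$-terms via $(1-\eps)a_\|(\xi^{\eps,\sigma},\cdot)-a_\|(\xi^{0,\sigma},\cdot)=(1-\eps)a_\|(\xi^{\eps,\sigma}-\xi^{0,\sigma},\cdot)-\eps a_\|(\xi^{0,\sigma},\cdot)$ (and analogously for the $\eps a_\|(\xi^{\eps,\sigma},\cdot)$ term in the second equation), the two equations combine into
$$
C_{\eps,\sigma}\big((u^{\eps,\sigma}-u^{0,\sigma},\xi^{\eps,\sigma}-\xi^{0,\sigma}),(v,w)\big)=\eps\,a_\|(\xi^{0,\sigma},v+w)\,,\qquad\forall\,(v,w)\in\mathcal{V}\times\mathcal{A}\,.
$$
Bounding the right-hand side by $\eps|\xi^{0,\sigma}|_*|v|_{\mathcal{V}}+\eps|\xi^{0,\sigma}|_\||w|_\|\le(\eps|\xi^{0,\sigma}|_*+\sqrt{\eps}\,|\xi^{0,\sigma}|_\|)\|v,w\|_{\mathcal{X}_{\eps,\sigma}}$ (using $\|v,w\|_{\mathcal{X}_{\eps,\sigma}}^2\ge\eps|w|_\|^2$) and invoking the inf-sup property gives $\|u^{\eps,\sigma}-u^{0,\sigma},\xi^{\eps,\sigma}-\xi^{0,\sigma}\|_{\mathcal{X}_{\eps,\sigma}}\le\beta^{-1}(\eps|\xi^{0,\sigma}|_*+\sqrt{\eps}\,|\xi^{0,\sigma}|_\|)=O(\sqrt{\eps})$. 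The general case $f\in\mathcal{V}'$ then follows by the density argument of Theorem \ref{MainThCont}: split $f=f_\delta+R_\delta$ with $f_\delta\in L^2(\Omega)$ and $\|R_\delta\|_{\mathcal{V}'}$ arbitrarily small, and combine the $O(\sqrt{\eps})$ estimate for $f_\delta$ with linearity of the solution maps and the uniform bound \eqref{est} for $R_\delta$.

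The main obstacle is not the algebra but the functional-analytic bookkeeping: checking that the limit space $\tilde{\mathcal{A}}\cap L^2(\Omega)$ (resp. $\tilde{\mathcal{L}}^2$) is precisely the space $\tilde{L}\cap\hat{L}$ produced by Lemma \ref{infsupsig} and that the two equivalences $(AP_{\mathcal{S}})\leftrightarrow(AP'_{\mathcal{S}})$ and $(L_{\mathcal{S}})\leftrightarrow(L'_{\mathcal{S}})$ hold verbatim, and — for the convergence part — securing the regularity $\xi^{0,\sigma}\in\mathcal{A}$ when $f\in L^2(\Omega)$, which is what makes the subtracted error equation (and the $\mathcal{X}_{\eps,\sigma}$-norm of the difference) meaningful.
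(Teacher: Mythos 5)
Your proposal is correct and follows essentially the same route as the paper: the paper's proof likewise obtains existence, uniqueness and the uniform bound by applying Lemma \ref{infsupsig} with exactly the identifications $V=\mathcal{V}$, $L=\mathcal{A}$, $\tilde L=\tilde{\mathcal{A}}$, $\hat L=L^2(\Omega)$, and transfers the result to $(AP_{\mathcal{S}})^{\eps,\sigma}$ and $(L_{\mathcal{S}})^{\sigma}$ via the orthogonal decompositions $\mathcal{L}=\mathcal{G_L}\oplus^\perp\mathcal{A}$ and $\tilde{\mathcal{L}}=\mathcal{G_L}\oplus^\perp\tilde{\mathcal{A}}$. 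Your treatment of the $\eps$-convergence (subtraction, inf-sup, density in $\mathcal{V}'$) is exactly the argument of Theorem \ref{MainThCont} that the paper implicitly defers to, and is in fact spelled out in more detail than in the paper itself.
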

\begin{proof}
The existence and uniqueness of the solution to the reformulated problems
$(AP'_{\mathcal{S}})^{\eps, \sigma}$ and $(L'_{\mathcal{S}})^{\sigma}$ follows
directly from Lemma \ref{infsupsig}  by setting $V = \mathcal{V}$, $L =
\mathcal{A}$, $\tilde{L} = \mathcal{\tilde{A}}$, $\hat{L} = L^2 (\Omega)$.
Now, the equivalence of $(AP_{\mathcal{S}})^{\eps, \sigma}$ and
$(AP'_{\mathcal{S}})^{\eps, \sigma}$ is easily seen from the decomposition
$\mathcal{L} = \mathcal{G_L} \oplus^{\perp} \mathcal{A}$. Similarly, the
equivalence of $(L_{\mathcal{S}})^{\sigma}$ and $(L'_{\mathcal{S}})^{\sigma}$ can be derived from the decomposition $\tilde{\mathcal{L}} = \mathcal{G_L}
\oplus^{\perp} \widetilde{\mathcal{A}}$.
\end{proof}

\begin{theorem}
\label{qepssig} \textbf{(Existence/Uniqueness for $\eps \ge 0$ and $\sigma=0$)} 
Let
hypothesis A be satisfied. The $(AP_{\mathcal{A}})^{\eps}$-problem (\ref{APA}) (resp. $(L_{\mathcal{A}})$-problem (\ref{LA})) is well-posed for each $\eps\in(0,1]$ (resp. $\eps=0$), i.e. for any $f\in\mathcal{V}^{\prime }$ and any $\eps\in[0,1]
$ there exists a unique solution $(u^\eps,\xi^\eps) \in {\mathcal{X}_{\eps,0}}$,
which satisfies 
\begin{equation*} 
\|u^\eps,\xi^\eps\|_{\mathcal{X}_{\eps,0}} \le C \|f\|_{\mathcal{V}%
^{\prime }}\,,
\end{equation*}
with some $C >0$ independent on $\eps$. Furthermore, one has the $\eps$-convergence
\begin{equation*}
\|u^{\eps }-u^{0},\xi^{\eps }-\xi^{0}\|_{{\mathcal{X}_{\eps,0}}}\rightarrow 0\,,
\quad \text{for} \quad \eps \rightarrow 0\,.
\end{equation*}
If we suppose more regular data, as $f \in L^2(\Omega)$, then one has even $\xi^0 \in \mathcal{A}$ and the
estimates 
\begin{equation*}
|u^\eps- u^0|_{\mathcal{V}} \le C \sqrt{\eps}\,, \quad |\xi^\eps- \xi^0|_* \le
C \sqrt{\eps}\,,
\end{equation*}
with $C>0$ some $\eps$-independent constant.
\end{theorem}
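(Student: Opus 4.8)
The proof is entirely parallel to that of Theorem~\ref{MainThCont}, with Lemma~\ref{infsupsig} (specialised to $\sigma=0$) playing the role of Lemma~\ref{infsup}. The plan is first to recast $(AP_{\mathcal{A}})^{\eps}$ and $(L_{\mathcal{A}})$ into the abstract setting of Lemma~\ref{infsupsig} by choosing $V=\mathcal{V}$, $L=\mathcal{A}$, $\tilde L=\tilde{\mathcal{A}}$ (the $|\cdot|_{*}$-closure of $\mathcal{A}$), and $\hat L=L^{2}(\Omega)$ — any admissible choice works since the $\sigma$-term disappears — and by taking the forms $a(\cdot,\cdot)$, $b(\cdot,\cdot)$, $c(\cdot,\cdot)$ of the lemma to be, respectively, $a(\cdot,\cdot)$, $a_{\Vert}(\cdot,\cdot)$ and $a_{\Vert}(\cdot,\cdot)$. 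The continuity $|a_{\Vert}(\xi,v)|\le|\xi|_{*}|v|_{\mathcal{V}}$ and the inequality $|\xi|_{*}\le|\xi|_{\Vert}$ have already been recorded, and the inf-sup hypothesis \eqref{infsup_b_bis} holds with $\alpha=1$ because $\sup_{v\in\mathcal{V}}a_{\Vert}(\xi,v)/|v|_{\mathcal{V}}=|\xi|_{*}$ for every $\xi\in\tilde{\mathcal{A}}$, by the very definition of $|\cdot|_{*}$. With $\sigma=0$ the space $X_{\eps,0}$ of Lemma~\ref{infsupsig} is exactly $\mathcal{V}\times\mathcal{A}$ for $\eps>0$ and $\mathcal{V}\times\tilde{\mathcal{A}}$ for $\eps=0$, equipped with $\|\cdot\|_{\mathcal{X}_{\eps,0}}$, and the form $C_{\eps,0}$ is precisely the left-hand side of $(AP_{\mathcal{A}})^{\eps}$ resp.\ $(L_{\mathcal{A}})$. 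Lemma~\ref{infsupsig} then yields an $\eps$-independent inf-sup constant $\beta>0$, whence the BNB theorem \cite{ern} gives existence, uniqueness and the a priori bound $\|u^{\eps},\xi^{\eps}\|_{\mathcal{X}_{\eps,0}}\le(1/\beta)\|f\|_{\mathcal{V}'}$ for every $\eps\in[0,1]$.

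For the $\eps$-convergence I would first assume $f\in L^{2}(\Omega)$ and check that in this case $\xi^{0}\in\mathcal{A}$. We have proved in \cite{DLNN} that the inflow multiplier satisfies $q^{0}\in\mathcal{L}_{in}\subset\mathcal{L}$; decomposing $q^{0}=g+\xi$ along $\mathcal{L}=\mathcal{G_L}\oplus^{\perp}\mathcal{A}$ and using $\nabla_{\Vert}g=0$, one gets $a_{\Vert}(q^{0},\cdot)=a_{\Vert}(\xi,\cdot)$, so that the first equation of $(L_{in})$ becomes $a(u^{0},v)+a_{\Vert}(\xi,v)=(f,v)$, which together with $a_{\Vert}(u^{0},w)=0$ (valid since $\nabla_{\Vert}u^{0}=0$) shows that $(u^{0},\xi)\in\mathcal{V}\times\tilde{\mathcal{A}}$ solves $(L_{\mathcal{A}})$; by uniqueness $\xi^{0}=\xi\in\mathcal{A}$. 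Subtracting $(L_{\mathcal{A}})$ from $(AP_{\mathcal{A}})^{\eps}$ and restricting the limit equations to test functions in $\mathcal{A}$ gives $C_{\eps,0}((u^{\eps}-u^{0},\xi^{\eps}-\xi^{0}),(v,w))=\eps\,a_{\Vert}(\xi^{0},v+w)$ for all $(v,w)\in\mathcal{V}\times\mathcal{A}$. Bounding the right-hand side exactly as in Theorem~\ref{MainThCont}, namely $\eps\,a_{\Vert}(\xi^{0},v+w)\le\eps|\xi^{0}|_{\Vert}(|v|_{\mathcal{V}}+|w|_{\Vert})\le\sqrt{2\eps}\,|\xi^{0}|_{\Vert}\,\|v,w\|_{\mathcal{X}_{\eps,0}}$ (using $|v|_{\Vert}\le|v|_{\mathcal{V}}$, $\eps\le\sqrt\eps$ and $\sqrt\eps|w|_{\Vert}\le\|v,w\|_{\mathcal{X}_{\eps,0}}$), and invoking the inf-sup condition with $\beta'=\beta/2$, one obtains $\|u^{\eps}-u^{0},\xi^{\eps}-\xi^{0}\|_{\mathcal{X}_{\eps,0}}\le(\sqrt{2\eps}/\beta')\,|\xi^{0}|_{\Vert}$, which gives at once the two $\sqrt\eps$-estimates in the statement.

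Finally I would remove the regularity assumption on $f$ by the density argument of Theorem~\ref{MainThCont}: since $L^{2}(\Omega)$ is dense in $\mathcal{V}'$, one writes $f=f_{\delta}+R_{\delta}$ with $f_{\delta}\in L^{2}(\Omega)$ and $\|R_{\delta}\|_{\mathcal{V}'}$ arbitrarily small, and combines the linearity of $f\mapsto(u^{\eps},\xi^{\eps})$, the uniform a priori bound applied to the remainder, and the $\sqrt\eps$-decay already established for $f_{\delta}$, to conclude $\|u^{\eps}-u^{0},\xi^{\eps}-\xi^{0}\|_{\mathcal{X}_{\eps,0}}\to0$. The whole argument is thus a transcription of Section~\ref{SEC22} with $(\mathcal{L}_{in},|\cdot|_{\Vert})$ replaced by $(\mathcal{A},|\cdot|_{\Vert})$ and $\mathcal{X}_{\eps}$ by $\mathcal{X}_{\eps,0}$; the only genuinely new point — and the one I expect to require the most care — is the identification $\xi^{0}\in\mathcal{A}$ for $f\in L^{2}(\Omega)$, i.e.\ checking that the $\mathcal{A}$-component of the inflow multiplier $q^{0}$ is exactly the average-zero limit multiplier, and, secondarily, verifying that Lemma~\ref{infsupsig} applies verbatim at $\sigma=0$ so that $C_{\eps,0}$ and $\|\cdot\|_{\mathcal{X}_{\eps,0}}$ coincide with the data of $(AP_{\mathcal{A}})^{\eps}$.
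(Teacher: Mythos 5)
Your proposal is correct and follows essentially the same route as the paper, which simply invokes Lemma~\ref{infsup} (equivalent to Lemma~\ref{infsupsig} at $\sigma=0$) with $V=\mathcal{V}$, $L=\mathcal{A}$, $\tilde L=\tilde{\mathcal{A}}$ and then repeats the argument of Theorem~\ref{MainThCont} for the $\eps$-convergence. Your explicit identification of $\xi^{0}$ as the $\mathcal{A}$-component of the inflow multiplier $q^{0}$ (via $\mathcal{L}=\mathcal{G_L}\oplus^{\perp}\mathcal{A}$ and uniqueness for $(L_{\mathcal{A}})$) is a valid way to supply the fact $\xi^{0}\in\mathcal{A}$ for $f\in L^{2}(\Omega)$, a detail the paper leaves implicit.
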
 
\begin{proof}
The existence and uniqueness of a solution $(u^{\eps},\xi ^{\eps%
})\in {\mathcal{V}}\times {\mathcal{A}}$ to $(AP_{\mathcal{A}})^{\eps}$
resp. $(u^{0},\xi ^{0})\in {\mathcal{V}}\times \tilde{\mathcal{A}}$ to $(L_{%
\mathcal{A}})$ is easily established using Lemma \ref{infsup}. The statements about the convergence as $\eps\to 0$ follow in the same way as in the proof of Theorem \ref{MainThCont}.
\end{proof}

\begin{theorem}
\label{convepssig} \textbf{($\sigma$-Convergence)}
Let hypothesis A be satisfied and moreover, $(u^{\eps,\sigma },\xi^{%
\eps,\sigma })\in {\mathcal{V}} \times {\mathcal{A}}$ be solution to $(AP_{%
\mathcal{S}})^{\eps ,\sigma }$ and $(u^{\eps},\xi^{\eps})\in {\mathcal{V}}
\times {\mathcal{A}}$ solution of $(AP_{\mathcal{A}})^{\eps } $, with $\eps>0$. Suppose
that $\xi^{\eps}\in H^{1}(\Omega )$. Then 
\begin{equation}
  \|u^{\eps,\sigma }-u^{\eps },\xi^{\eps,\sigma }-\xi^{\eps }\|_{\mathcal{X}_{%
      \eps,\sigma }}\leq c\sigma |\xi^{\eps }|_{H^{1}}\,,
  \label{eq:sigma_err}
\end{equation}%
with a constant $c>0$ independent of $\sigma $ and $\eps $.
\end{theorem}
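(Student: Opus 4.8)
The plan is to reproduce the error-analysis pattern of Theorem~\ref{MainThCont}: subtract the two variational problems, invoke the $\eps$- and $\sigma$-uniform inf--sup condition of Lemma~\ref{infsupsig}, and then estimate the residual produced by the stabilization term. Since $\eps>0$ and $\sigma>0$, one may work with the equivalent reformulation $(AP'_{\mathcal S})^{\eps,\sigma}$ in~\eqref{APstab1} and with $(AP_{\mathcal A})^{\eps}$ in~\eqref{APA}, both posed on $\mathcal V\times\mathcal A$. Writing $C_{\eps,\sigma}$ (resp.\ $C_{\eps,0}$) for the bilinear form of Lemma~\ref{infsupsig} with the choice $V=\mathcal V$, $L=\mathcal A$, $\tilde L=\tilde{\mathcal A}$, $\hat L=L^{2}(\Omega)$ (resp.\ the same with $\sigma=0$), one has the identity $C_{\eps,0}((u,\xi),(v,w))=C_{\eps,\sigma}((u,\xi),(v,w))+\sigma(\xi,w)$; hence, setting $e_u:=u^{\eps,\sigma}-u^{\eps}$ and $e_\xi:=\xi^{\eps,\sigma}-\xi^{\eps}$, subtraction gives
\begin{equation*}
C_{\eps,\sigma}\big((e_u,e_\xi),(v,w)\big)=\sigma\,(\xi^{\eps},w)\,,\qquad\forall\,(v,w)\in\mathcal V\times\mathcal A\,.
\end{equation*}

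For $\eps>0$ we have $\mathcal X_{\eps,\sigma}=\mathcal V\times\mathcal A$ equipped with the norm of the statement, so the inf--sup inequality \eqref{infsup-abs} of Lemma~\ref{infsupsig} applies. Together with $\|v,w\|_{\mathcal X_{\eps,\sigma}}\ge|w|_{*}$ it yields
\begin{equation*}
\beta\,\|e_u,e_\xi\|_{\mathcal X_{\eps,\sigma}}\ \le\ \sup_{(v,w)\in\mathcal X_{\eps,\sigma}}\frac{\sigma\,(\xi^{\eps},w)}{\|v,w\|_{\mathcal X_{\eps,\sigma}}}\ \le\ \sigma\sup_{w\in\mathcal A}\frac{(\xi^{\eps},w)}{|w|_{*}}\,,
\end{equation*}
so the whole claim~\eqref{eq:sigma_err} is reduced to the bound
\begin{equation}\label{eq:proposedkey}
(\xi^{\eps},w)\ \le\ c\,|\xi^{\eps}|_{H^{1}}\,|w|_{*}\qquad\forall\,w\in\mathcal A\,,
\end{equation}
with $c$ independent of $\eps$ and $\sigma$. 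Note that the naive estimate $(\xi^{\eps},w)\le\|\xi^{\eps}\|_{L^{2}}\|w\|_{L^{2}}\le\sigma^{-1/2}\|\xi^{\eps}\|_{L^{2}}\|v,w\|_{\mathcal X_{\eps,\sigma}}$ would produce only the suboptimal rate $O(\sqrt\sigma)$, so the $H^{1}$-regularity of $\xi^{\eps}$ has to be used precisely in order to control the $L^{2}$-pairing through the \emph{weak} norm $|\cdot|_{*}$.

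To prove \eqref{eq:proposedkey} I would use that $\xi^{\eps}\in\mathcal A$ is $L^{2}$-orthogonal to $\mathcal G_{\mathcal L}$, hence also to $\mathcal G\subset\mathcal G_{\mathcal L}$ (since $\mathcal V\subset\mathcal L$); this is exactly the compatibility condition allowing one to solve the degenerate \emph{parallel} problem $a_{\|}(v,z)=(\xi^{\eps},z)$ for $v\in\mathcal V$, uniquely modulo $\mathcal G$, and to extend the identity to all test functions $w\in\mathcal A$. Granted the parallel-elliptic regularity bound $|v|_{\mathcal V}\le c\,|\xi^{\eps}|_{H^{1}}$, one then gets, for any $w\in\mathcal A$, using $v\in\mathcal V$ and the very definition \eqref{S_norm} of $|\cdot|_{*}$,
\begin{equation*}
(\xi^{\eps},w)=a_{\|}(w,v)\le|w|_{*}\,|v|_{\mathcal V}\le c\,|\xi^{\eps}|_{H^{1}}\,|w|_{*}\,,
\end{equation*}
which is \eqref{eq:proposedkey}, and combined with the previous display proves \eqref{eq:sigma_err} with constant $c/\beta$.

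The hard part is this regularity bound $|v|_{\mathcal V}\le c\,|\xi^{\eps}|_{H^{1}}$. The variational equation alone only controls the parallel semi-norm $|v|_{\|}$ (via $a_{\|}(v,v)=(\xi^{\eps},v)$ together with a Poincar\'e inequality along the field lines), whereas $|v|_{\mathcal V}$ also involves the perpendicular derivatives, for which the operator $-\nabla_{\|}\!\cdot(A_{\|}\nabla_{\|}\cdot)$ supplies no coercivity. Establishing it requires propagating the $H^{1}$-regularity of the data along the (smooth) field lines of $b$, e.g.\ by passing to field-line coordinates as in the periodic-slab computations of Remark~\ref{remal2}, where it reduces to the elementary inequality $\sum_{k,l}(k^{2}+l^{2})l^{-4}|\xi_{kl}|^{2}\le\sum_{k,l}(k^{2}+l^{2})|\xi_{kl}|^{2}$. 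This is exactly the step at which the hypothesis $\xi^{\eps}\in H^{1}(\Omega)$ is genuinely used, and the natural candidate for a separate technical lemma; once it is in hand, the remaining manipulations above are routine.
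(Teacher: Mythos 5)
Your reduction follows the paper's own proof almost step for step: subtract the two formulations posed on $\mathcal{V}\times\mathcal{A}$ to get $C_{\eps,\sigma}((e_u,e_\xi),(v,w))=\sigma(\xi^{\eps},w)$, invoke the uniform inf--sup condition of Lemma \ref{infsupsig}, and observe that the whole theorem hinges on bounding the pairing $(\xi^{\eps},w)$ by $c\,|\xi^{\eps}|_{H^{1}}|w|_{*}$ for $w\in\mathcal{A}$ (the naive $L^{2}$ estimate giving only $O(\sqrt\sigma)$, exactly as the paper notes in the remark following the theorem). The paper obtains this bound by writing $(\xi^{\eps},w)\leq |\xi^{\eps}|_{\mathcal{V}}|w|_{\mathcal{V}'}$ and proving $|w|_{\mathcal{V}'}\leq c|w|_{*}$ on $\mathcal{A}$ (Corollary \ref{H1Lt}); you instead apply the degenerate parallel solve to the datum $\xi^{\eps}$ itself and write $(\xi^{\eps},w)=a_{\|}(w,v)\leq |w|_{*}|v|_{\mathcal{V}}$. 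These two routes are transposes of one another, and both rest entirely on the same regularity statement: the solution $v$ of $a_{\|}(v,\cdot)=(g,\cdot)$ on $\mathcal{A}$ satisfies $\|v\|_{H^{1}}\leq c\|g\|_{H^{1}}$ (the paper's Lemma \ref{lleemm}, applied there with $g=(-\Delta)^{-1}w\in\mathcal{V}$, applied by you with $g=\xi^{\eps}$). So the approach is sound and essentially identical in architecture.

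The gap is that this regularity statement is the actual mathematical content of the theorem, and you do not prove it. Two points make it more than a routine deferral. First, your phrase ``solve the degenerate parallel problem for $v\in\mathcal{V}$, uniquely modulo $\mathcal{G}$'' quietly assumes the conclusion: Lax--Milgram (via $a_{\|}$-coercivity on $\mathcal{A}$ and the field-line Poincar\'e inequality) only delivers $v\in\mathcal{A}$ with $|v|_{\|}$ controlled; the assertion that $v$ has square-integrable \emph{perpendicular} derivatives, i.e.\ that $v$ lies in $H^{1}$ at all, is precisely the hard step, since $a_{\|}$ provides no coercivity in the transversal directions (indeed the degeneration quantified in Remark \ref{remal2} shows one cannot get this from soft functional-analytic arguments). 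Second, your sketch (an aligned Fourier computation) misses the mechanism that makes the general case work: in the paper's proof of Lemma \ref{lleemm} one passes to field-line coordinates, differentiates the equation transversally, and then recovers $\partial v/\partial\xi_{1}$ from the mixed derivative via a Poincar\'e inequality whose zero-mean hypothesis is supplied exactly by the constraint $v\in\mathcal{A}$, i.e.\ $\int_{0}^{1}J\,v\,d\xi_{2}=0$ differentiated in $\xi_{1}$. Without that normalization the solution of the parallel problem would generically fail to be in $H^{1}$, so the $\mathcal{A}$-membership of $v$ is not a technicality but the key structural input; your outline never uses it. Once Lemma \ref{lleemm} (or your transposed variant of it, with data $\xi^{\eps}\in H^{1}$ in place of $u\in\mathcal{V}$, which requires no essential change since the proof only uses $\|g\|_{H^{1}}$) is in hand, the rest of your argument is correct, up to the minor caveats that extending the variational identity from test functions in $\mathcal{V}$ to $w\in\mathcal{A}$ needs a density argument, and that plugging an $H^{1}$ function that does not vanish on $\Gamma_{D}$ into the supremum \eqref{S_norm} needs the same (implicit) justification the paper uses in Corollary \ref{H1Lt}.
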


\begin{proof}
We turn now to the convergence as $\sigma\to 0$. Using the combined bilinear form $C_{\eps,\sigma }$ and recalling the problems $(AP_{\mathcal S}')^{\eps,\sigma}$ resp. $(AP_{\mathcal A})^{\eps}$, we can write 
\begin{equation*}
C_{\eps,\sigma }((u^{\eps,\sigma },\xi ^{\eps,\sigma }),(v,w))=(f,v)\,, \quad \forall\,  (v,w)\in \mathcal{V}\times \mathcal{A}\,,
\end{equation*}%
\begin{equation*}
C_{\eps,0}((u^{\eps},\xi ^{\eps}),(v,w))=(f,v)\,, \quad \forall\, (v,w)\in \mathcal{V}\times \mathcal{A}\,,
\end{equation*}%
where%
\begin{equation*}
C_{\eps,\sigma }((u,\xi ),(v,w)):=a(u,v)+(1-\eps)a_{\Vert }(\xi ,v)+a_{\Vert
}(u,w)-\eps a_{\Vert }(\xi ,w)-\sigma (\xi ,w)\,.
\end{equation*}%
Note that $C_{\eps,0}$ coincides with $C_{\eps}$ as defined by (\ref{Ceps}).
Taking the difference gives%
\begin{equation*}
C_{\eps,\sigma }((u^{\eps,\sigma }-u^{\eps},\xi ^{\eps,\sigma }-\xi ^{\eps%
}),(v,w))=\sigma (\xi ^{\eps},w)\leq \sigma |\xi ^{\eps}|_{{\mathcal{%
V}}}|w|_{{\mathcal{V}}^{\prime }}\leq c\sigma |\xi ^{\eps}|_{{\mathcal{V}}%
}|w|_{*}\,.
\end{equation*}%
We have used here the bound $|w|_{{\mathcal{V}}^{\prime }}\leq c|w|_{*}$
valid for $w\in \mathcal{A}$ as proved below (Corollary \ref{H1Lt}).

Now, remind that the form $C_{\eps ,\sigma }$ enjoys the inf-sup property%
\begin{equation*}
\inf_{(u,\xi)\in \mathcal{X}{_{\eps,\sigma }}}\sup_{(v,w)\in \mathcal{X}{_{%
\eps,\sigma }}}\frac{C_{\eps,\sigma }((u,\xi),(v,w))}{\|u,\xi\|_{\mathcal{X}{_{%
\eps,\sigma }}}\|v,w\|_{\mathcal{X}{_{\eps,\sigma }}}}\geq \beta \,,
\end{equation*}%
where $\|u,\xi\|_{\mathcal{X}{_{\eps,\sigma }}}=(|u|_{\mathcal{V}%
}^{2}+|\xi|_{*}^{2}+\eps |\xi|_{\|}^{2}+\sigma \|\xi\|_{L^{2}}^{2})^{1/2}$, so
that $|w|_{*}\leq \|v,w\|_{\mathcal{X}{_{\eps,\sigma }}}$. We can thus
conclude that there exists $(v,w)\in \mathcal{X}_{\eps,\sigma }$ such that $%
\|v,w\|_{\mathcal{X}{_{\eps,\sigma }}}=1$ and%
\begin{equation*}
\beta' \|u^{\eps ,\sigma }-u^{\eps },\xi^{\eps ,\sigma }-\xi^{\eps }\|_{%
\mathcal{X}{_{\eps,\sigma }}}\leq C_{\eps ,\sigma }((u^{\eps ,\sigma }-u^{%
\eps },\xi^{\eps ,\sigma }-\xi^{\eps }),(v,w))\leq c\, \sigma |\xi^{\eps }|_{%
\mathcal{V}}\|v,w\|_{\mathcal{X}{_{\eps,\sigma }}}=c\, \sigma |\xi^{\eps %
}|_{H^{1}}\,,
\end{equation*}
with some $0<\beta'<\beta$, for example $\beta'=\beta/2$. This concludes the proof.
\end{proof}

\begin{remark}
Without the additional hypothesis $\xi ^{\eps}\in H^{1}(\Omega )$, we can
easily prove a sub-optimal estimate%
\begin{equation*}
\|u^{\eps,\sigma }-u^{\eps},\xi ^{\eps,\sigma }-\xi ^{\eps}\|_{\mathcal{X}_{%
\eps,\sigma }}\leq c\sqrt{\sigma }\Vert \xi ^{\eps}\Vert _{L^{2}}\,.
\end{equation*}%
Indeed,%
\begin{equation*}
C_{\eps,\sigma }((u^{\eps,\sigma }-u^{\eps},\xi ^{\eps,\sigma }-\xi ^{\eps%
}),(v,w))=\sigma (\xi ^{\eps},w)\leq \sigma \|\xi ^{\eps%
}\|_{L^{2}(\Omega )}\|w\|_{L^{2}(\Omega )}\leq c\sqrt{\sigma }\|\xi ^{\eps%
}\|_{L^{2}(\Omega )}\|v,w\|_{\mathcal{X}{_{\eps,\sigma }}}\,.
\end{equation*}
\end{remark}

\begin{remark} The conclusions of Theorem \ref{qepssig} remain true (after an obvious rephrasing) in the limit case $\eps=0$ 
since the proof relies on the estimates in the norm of $\mathcal{X}_{\eps,\sigma}$ which remains a valid norm in the limit $\eps\to 0$.
\end{remark}

It remains to prove the bound $|w|_{{\mathcal V}^{\prime }}\leq c\,|w|_{*}$ valid for $%
w\in \mathcal{A}$. This will be done using the following result:

\begin{lemma} \label{lleemm}
Let $u\in \mathcal{V}$ and consider $v\in \mathcal{A}$ being the unique
solution to%
\begin{equation}
a_{\|}(v,w)=(u,w),\quad \forall w\in \mathcal{A.}  \label{aparv}
\end{equation}%
Then $v\in H^{1}(\Omega )$ and there exists a constant $c>0$ such that $%
\|v\|_{H^{1}}\leq c\, |u|_{\mathcal{V}}.$
\end{lemma}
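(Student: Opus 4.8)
The plan is to split $\|v\|_{H^{1}(\Omega)}$ into its parallel and perpendicular parts, dispose of the parallel part by testing the equation with $v$ itself, and reserve the real work for the perpendicular gradient, which is where the transverse $H^{1}$-regularity of $u$ must enter. First I would test \eqref{aparv} with $w=v\in\mathcal A$, obtaining $|v|_{\|}^{2}=(u,v)\le\|u\|_{L^{2}}\|v\|_{L^{2}}$. Since every element of $\mathcal A$ has zero (weighted) mean along each field line, a Poincar\'e--Wirtinger inequality along the field lines gives $\|v\|_{L^{2}}\le c\,|v|_{\|}$, so that $\|v\|_{L^{2}}+|v|_{\|}\le c\,\|u\|_{L^{2}}\le c\,|u|_{\mathcal V}$, the last bound by the Poincar\'e inequality on $\mathcal V$, which is available because $\overset{\circ}{\Gamma_{D}}\neq\varnothing$. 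As $\nabla_{\|}v=(b\cdot\nabla v)\,b$ and $|b|=1$, this already controls $\|v\|_{L^{2}}$ and $\|\nabla_{\|}v\|_{L^{2}}$; it remains to show $\|\nabla_{\perp}v\|_{L^{2}(\Omega)}\le c\,|u|_{\mathcal V}$.

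For this part I would straighten the flow of $b$: cover $\overline\Omega$ by finitely many flow-box charts in each of which a $C^{\infty}$ diffeomorphism maps the field lines onto straight segments $\{y=\mathrm{const}\}$, with $y\in\RR^{d-1}$ labelling the lines and $s$ an arc-length coordinate along them, and fix a smooth partition of unity subordinate to this cover. Transporting \eqref{aparv} through the chart, $v$ is characterised, for a.e.\ transverse slice $y$, as the solution of the one-dimensional Sturm--Liouville problem $-\partial_{s}\bigl(\tilde A\,\partial_{s}\tilde v\bigr)=\tilde u\,\tilde J$ with the natural no-flux condition $\tilde A\,\partial_{s}\tilde v=0$ at the endpoints lying on $\Gamma_{N}$ (or with periodicity, for closed field lines), together with the normalisation that fixes the weighted mean of $\tilde v$ on the slice to zero. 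The weight $\tilde A$ and the Jacobian $\tilde J$ are smooth and two-sided bounded by Hypothesis~A, the compatibility condition $\int\tilde u\,\tilde J\,ds=0$ holds because $u\in\mathcal A$, and the solution is given explicitly by two successive integrations in $s$, the flux constant being fixed by the boundary (or periodicity) condition and the additive constant $C(y)$ by the zero-mean normalisation.

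The final step is to differentiate this explicit representation in the transverse variable $y$. One obtains $\partial_{y}\tilde v$ as a finite sum of terms, each a composition of one-dimensional ``integration along the line'' operators applied to $\tilde u$ or to $\partial_{y}\tilde u$, multiplied by the smooth bounded coefficients $1/\tilde A$, $\tilde J$ and their $y$-derivatives, plus boundary terms coming from the smooth dependence $y\mapsto$ endpoint and $y\mapsto C(y)$. Provided the field-line segments have uniformly bounded length, each such integration operator is bounded on $L^{2}$ of the line with a uniform constant, so that taking $L^{2}$ norms in $(s,y)$ and using Fubini yields $\|\partial_{y}\tilde v\|_{L^{2}}\le c\bigl(\|\tilde u\|_{L^{2}}+\|\partial_{y}\tilde u\|_{L^{2}}\bigr)\le c\,\|u\|_{H^{1}(\Omega)}\le c\,|u|_{\mathcal V}$. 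Transporting back through the charts, whose Jacobians are two-sided bounded, and summing over the partition of unity gives $\|\nabla_{\perp}v\|_{L^{2}(\Omega)}\le c\,|u|_{\mathcal V}$; combined with the first paragraph this is the claim.

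The step I expect to be the main obstacle is precisely this last one: one has to verify that differentiating the one-dimensional solution transversally costs at most \emph{one} transverse derivative of $u$, so that the argument stays within the regularity $u\in H^{1}$ that is actually available, and that all the resulting line-integral operators are bounded on $L^{2}$ uniformly across slices. Two secondary technical points are the treatment of closed field lines (periodic one-dimensional problem, solvable thanks to the zero-mean property built into $\mathcal A$) and the matching of flow-box charts near $\Gamma_{N}$ and $\Gamma_{D}$; in the model aligned geometry all of this reduces to the elementary explicit computation of Appendix~A.
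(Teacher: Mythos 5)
Your route is genuinely different from the paper's. The paper also begins by testing with $v$ to get $|v|_{\|}\le c\|u\|_{L^2}$ and $\|v\|_{L^2}\le C\|u\|_{L^2}$, and also straightens the field so that $\nabla_{\|}$ becomes $\alpha\,\partial_{\xi_2}$ on the unit square; but from there it never writes the strong form. Instead it stays in the weak formulation: it substitutes $w=\partial\omega/\partial\xi_1$, integrates by parts in $\xi_1$, extends by density to $\omega=\partial v/\partial\xi_1$, and thereby derives an a priori bound on the mixed derivative $\partial^2 v/\partial\xi_1\partial\xi_2$ costing exactly one transverse derivative of $u$; the transverse gradient $\partial v/\partial\xi_1$ is then recovered from the mixed derivative via a one-dimensional Poincar\'e inequality on each slice, whose mean-value term is controlled by differentiating the constraint $\int_0^1 J\,v\,d\xi_2=0$ in $\xi_1$. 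This energy/duality argument buys you exactly what you identify as your main obstacle — that only one transverse derivative of $u$ is ever spent — without any explicit solution formula, Lagrange multiplier bookkeeping, or boundary terms from $y$-dependent endpoints. Your explicit Sturm--Liouville representation is viable and arguably more transparent in the aligned model case, but it carries more delicate bookkeeping, and you have left precisely the decisive step (the uniform $L^2$ bounds on all the transversally differentiated line-integral operators) at the level of a plan.

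One concrete error: you justify the compatibility condition $\int\tilde u\,\tilde J\,ds=0$ on each field line ``because $u\in\mathcal A$'', but the hypothesis of the lemma is $u\in\mathcal V$, not $u\in\mathcal A$, so this integral does not vanish in general and your one-dimensional Neumann (or periodic) problem, as written, is not solvable. The correct strong form of \eqref{aparv} — obtained because the test space is $\mathcal A$ rather than all of $\mathcal L$ — is $-\partial_s(\tilde A\,\partial_s\tilde v)=\tilde J\,(\tilde u-\lambda(y))$ with the fibrewise Lagrange multiplier $\lambda(y)=\int\tilde J\tilde u\,ds\big/\!\int\tilde J\,ds$, which restores compatibility. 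This is fixable ($\lambda$ and $\partial_y\lambda$ are controlled by $\|u\|_{H^1}$), but your representation formula, and hence everything differentiated from it, must be corrected accordingly before the final estimate can be claimed.
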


\begin{proof}
To simplify the notations, let us restrict ourselves to the 2D case in this
proof (the extension to $d > 2$ is rather straightforward). There is an evident bound
$|| \nabla_{\|} v ||_{L^2} \leq c  || u ||_{L^2}$ which implies
$|| v ||_{L^2} \leq C ||u||_{L^2}$ by a Poincar{\'e} type inequality
{\cite{DDLNN}}. To continue, let us change the coordinates on $\Omega$ and suppose that there exist new
coordinates $(\xi_1, \xi_2)$ so that $\Omega$ becomes the unit square
$\Omega_{\xi} = (0, 1)^2$ and $\nabla_{||}$ becomes $\alpha (\xi_1, \xi_2)
\frac{\partial}{\partial \xi_2}$ with some positive function $\alpha$.
Problem (\ref{aparv}) is written in these new coordinates as
\begin{equation*}
\int_{\Omega _{\xi }}N\, \frac{\partial v}{\partial \xi _{2}}\frac{\partial w}{%
\partial \xi _{2}}\, d\xi_1d\xi_2=\int_{\Omega _{\xi }}J\, uw\, d\xi_1d\xi_2\,,
\end{equation*}%
where $J=J(\xi _{1},\xi _{2})$ is the Jacobian and $N=N(\xi _{1},\xi
_{2})=A_{||}\, J\, \alpha ^{2}$, which are positive functions given by the geometry. 

Let us now replace here $w$ by $\frac{\partial \omega}{\partial \xi_1}$
with arbitrary and sufficiently smooth function $\omega$ such that
$\omega  = 0$ at $\xi_1 = 0$ and at $\xi_1 =
1$. Integration by parts with respect to $\xi_1$ yields then
\[ \int_{\Omega_{\xi}} \frac{\partial N}{\partial \xi_1}  \frac{\partial
   v}{\partial \xi_2}  \frac{\partial \omega }{\partial \xi_2} \, d\xi_1d\xi_2+
   \int_{\Omega_{\xi}} N \frac{\partial^2 v}{\partial \xi_1 \partial \xi_2} 
   \frac{\partial \omega }{\partial \xi_2} \, d\xi_1d\xi_2= \int_{\Omega_{\xi}}
   \frac{\partial (Ju)}{\partial \xi_1} \,\omega \, d\xi_1d\xi_2. \]
   
Noting that $\omega $ is not differentiated in the last formula wrt $\xi_1$ we can
use density arguments and extend this relation to a broader class of test
functions $\omega $, not necessarily vanishing at $\xi_1 = 0, 1$. In particular, we
can now set $\omega  = \frac{\partial v}{\partial \xi_1}$ and get
\begin{equation*}
\int_{\Omega _{\xi }}N\, \left( \frac{\partial ^{2}v}{\partial \xi _{1}\partial
\xi _{2}}\right) ^{2}\, d\xi_1d\xi_2=\int_{\Omega _{\xi }}\frac{\partial (Ju)}{\partial \xi
_{1}}\frac{\partial v}{\partial \xi _{1}}\, d\xi_1d\xi_2-\int_{\Omega _{\xi }}\frac{%
\partial N}{\partial \xi _{1}}\frac{\partial v}{\partial \xi _{2}}\frac{%
\partial ^{2}v}{\partial \xi _{1}\partial \xi _{2}}\, d\xi_1d\xi_2\,.
\end{equation*}
This, reminding $\left\| \frac{\partial v}{\partial \xi_2} \right\|_{L^2} \leq
c || \nabla_{\|} v ||_{L^2} \leq c  || u ||_{L^2}$, entails by
Young inequality
\begin{equation}\label{vmixed}
  \left\| \frac{\partial^2 v}{\partial \xi_1 \partial \xi_2} \right\|^2_{L^2}
   \leq c \gamma  || u ||^2_{H^1} + \frac{c}{\gamma}
     \left\| \frac{\partial v}{\partial \xi_1} \right\|^2_{L^2}
   + c || u ||^2_{L^2}  \,,
\end{equation}
with a fixed constant $c > 0$ and arbitrary $\gamma> 0$.

Applying a Poincar{\'e} type inequality to $J \frac{\partial v}{\partial
\xi_1}$, we can write $\forall \xi_1 \in (0, 1)$
\begin{equation}\label{poinx2}
  \int_0^1 \left( \frac{\partial v}{\partial \xi_1} \right)^2 d \xi_2 \leq C
  \int_0^1 \left( \frac{\partial^2 v}{\partial \xi_1 \partial \xi_2} \right)^2
  d \xi_2 + C \left( \int_0^1 J\, \frac{\partial v}{\partial \xi_1}\, d \xi_2
  \right)^2\,.
\end{equation}
Remind that $v \in \mathcal{A}$, which means
\[ \int_0^1 J (\xi_1, \xi_2) \,v (\xi_1, \xi_2)\, d \xi_2 = 0 \hspace{1em} \forall
   \xi_1 \in (0, 1)\,, \]
or, after differentiation wrt $\xi_1$,
\[ \int_0^1 J \,\frac{\partial v}{\partial \xi_1} \,d \xi_2 + \int_0^1
   \frac{\partial J}{\partial \xi_1}\, v\, d \xi_2 = 0 \hspace{1em} \forall \xi_1
   \in (0, 1)\,. \]
Relation (\ref{poinx2}) can be now rewritten as
\[ \int_0^1 \left( \frac{\partial v}{\partial \xi_1} \right)^2 d \xi_2 \leq C
   \int_0^1 \left( \frac{\partial^2 v}{\partial \xi_1 \partial \xi_2}
   \right)^2 d \xi_2 + C \left( \int_0^1 \frac{\partial J}{\partial \xi_1} \,v\,d
   \xi_2 \right)^2 \,\]
which, after integrating over $\xi_1 \in (0, 1)$, with the aid of (\ref{vmixed}) and 
the the bound $|| v ||_{L^2} \leq C ||u||_{L^2}$, gives
\[ \left\| \frac{\partial v}{\partial \xi_1} \right\|^2_{L^2} \leq C \left\|
   \frac{\partial^2 v}{\partial \xi_1 \partial \xi_2} \right\|^2_{L^2} + C
   \|v\|^2_{L^2} \leq Cc \gamma  || u ||^2_{H^1} +
   \frac{Cc}{\gamma}   \left\| \frac{\partial v}{\partial
   \xi_1} \right\|^2_{L^2} + \tilde{C}  || u ||^2_{H^1} . \]
This implies, taking $\gamma$ sufficiently big. \[ \left\| \frac{\partial
   v}{\partial \xi_1} \right\|^{}_{L^2} \leq C  || u ||_{H^1}\,,
\] which gives the desired result since, as already noted, $\left\| \frac{\partial v}{\partial \xi_2} \right\|_{L^2} 
 \leq c  || u ||_{L^2}$.
\end{proof}

\begin{corollary}
\label{H1Lt} Let $\xi\in \mathcal{A}$. Then one has $|\xi|_{{\mathcal{V}}%
^{\prime }}\leq c\, |\xi|_{*}$ with some constant $c>0$.
\end{corollary}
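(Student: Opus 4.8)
The plan is to exploit the auxiliary problem of Lemma \ref{lleemm} to turn the $L^{2}$-pairing that defines $|\cdot|_{\mathcal V'}$ into an $a_{\|}$-pairing, which is precisely the quantity controlled by $|\cdot|_{*}$. Fix $\xi\in\mathcal A$. Since $|\xi|_{\mathcal V'}=\sup_{v\in\mathcal V}(\xi,v)/|v|_{\mathcal V}$, it suffices to prove $(\xi,v)\le c\,|\xi|_{*}\,|v|_{\mathcal V}$ for every $v\in\mathcal V$. Given such a $v$, I would apply Lemma \ref{lleemm} with $u=v$ to produce $\phi\in\mathcal A\cap H^{1}(\Omega)$ satisfying $a_{\|}(\phi,w)=(v,w)$ for all $w\in\mathcal A$ and $\|\phi\|_{H^{1}}\le c\,|v|_{\mathcal V}$. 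Choosing the admissible test function $w=\xi\in\mathcal A$ then gives
\[
(\xi,v)=(v,\xi)=a_{\|}(\phi,\xi)=a_{\|}(\xi,\phi)\,.
\]

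The remaining step is to bound $a_{\|}(\xi,\phi)$ through the definition (\ref{S_norm}) of $|\xi|_{*}$, i.e.\ by $|\xi|_{*}\,|\phi|_{\mathcal V}$. For this one first checks that $\phi$ is an admissible competitor in the supremum defining $|\xi|_{*}$, namely that $\phi\in\mathcal V$, and then uses $|\phi|_{\mathcal V}^{2}=a(\phi,\phi)\le A_{1}\|\nabla\phi\|_{L^{2}}^{2}\le A_{1}\|\phi\|_{H^{1}}^{2}\le c\,|v|_{\mathcal V}^{2}$. Combining, $(\xi,v)\le|\xi|_{*}\,|\phi|_{\mathcal V}\le c\,|\xi|_{*}\,|v|_{\mathcal V}$, and taking the supremum over $v\in\mathcal V$ yields the claim.

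The one point that needs genuine care — and the main obstacle — is the verification that $\phi\in\mathcal V$, i.e.\ $\phi|_{\Gamma_{D}}=0$: in general $\mathcal A\cap H^{1}(\Omega)\not\subset\mathcal V$, so this must use that the data $v$ of the auxiliary problem vanishes on $\Gamma_{D}$. The reason is that $\Gamma_{D}$ consists of characteristic curves (field lines) of the degenerate operator $\nabla_{\|}\cdot(A_{\|}\nabla_{\|}\cdot)$: along such a boundary field line the auxiliary problem reduces, in the coordinates $(\xi_{1},\xi_{2})$ with $\Omega\cong(0,1)^{2}$ used in the proof of Lemma \ref{lleemm}, to the one-dimensional Neumann problem $-\partial_{\xi_{2}}(N\partial_{\xi_{2}}\phi)=Jv=0$ supplemented with the zero-mean normalization coming from $\phi\in\mathcal A$, which forces $\nabla_{\|}\phi=0$ and then $\phi\equiv0$ there. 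Hence $\phi$ inherits the homogeneous boundary condition on $\Gamma_{D}$ from $v$. I would state this observation explicitly (it is in fact already contained in the construction carried out in the proof of Lemma \ref{lleemm}); once it is in hand, the estimate above closes the argument.
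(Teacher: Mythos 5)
Your argument is essentially the paper's own proof: both use the auxiliary problem of Lemma~\ref{lleemm} to rewrite the $L^2$-pairing $(\xi,v)$ as $a_{\|}(\xi,\phi)$ and then close with the bound $\|\phi\|_{H^1}\le c\,|v|_{\mathcal V}$, the only cosmetic difference being that the paper specializes $v$ to the Riesz representative of $\xi$ in $\mathcal V$ rather than taking a supremum over all $v$ at the end. Your explicit verification that $\phi\in\mathcal V$ (so that $\phi$ is an admissible competitor in the supremum defining $|\xi|_*$) addresses a point the paper's proof passes over in silence, and your fiber-wise argument on the boundary field lines of $\Gamma_D$ is the right way to justify it.
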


\begin{proof}
One can immediately see that $|\xi|_{{\mathcal{V}}^{\prime }}=|u|_{{\mathcal{%
V}}}$ where $u\in \mathcal{V}$ solves 
\begin{equation} \label{iiiii}
(\nabla u,\nabla w)=(\xi,w),\quad \forall w\in \mathcal{V.}
\end{equation}%
This means in particular that $\xi=-\Delta u$. Let now $v\in \mathcal{A}$ be
the solution to (\ref{aparv}), corresponding to $u$ solution to \eqref{iiiii}. Lemma \ref{lleemm} implies thus that $v \in H^1(\Omega)$ and one has
\begin{equation*}
|\xi|_{{\mathcal{V}}^{\prime }}=|u|_{{\mathcal{V}}}=\frac{(-\Delta u,u)}{%
|u|_{{\mathcal{V}}}}=\frac{(\xi,u)}{|u|_{H^{1}}}=\frac{a_{\|}(v,\xi)}{%
|u|_{H^{1}}}\leq c\, \frac{a_{\|}(\xi,v)}{\|v\|_{H^{1}}}\leq c\, |\xi|_{*}%
\text{.}
\end{equation*}
\end{proof}

\subsection{Numerical analysis for the stabilized AP-scheme}

Let us introduce a mesh ${\mathcal{T}}_{h}$ on $\Omega $ consisting of
triangles (resp. rectangles) of maximal size $h$ and let ${V}_{h}\subset 
\mathcal{V}$ be the space of $\mathbb{P}_{k}$ (resp. $\mathbb{Q}_{k}$)
finite elements on ${\mathcal{T}}_{h}$. We want now to discretize the
stabilized problem (\ref{APstab}) and remark that we can use $V_{h}$ for
both variables $u$ and $\xi$. We are thus looking for a discrete solution $%
(u_{h}^{\eps ,\sigma },\xi_{h}^{\eps
,\sigma })\in {V}_{h}\times {V}_{h}$ of 
\begin{equation}
(AP_{\mathcal{S}})_{h}^{\eps ,\sigma }\,\,\,\left\{ 
\begin{array}{l}
a(u_{h}^{\eps ,\sigma },v_{h})+(1-\eps )a_{\|}(\xi_{h}^{\eps ,\sigma
},v_{h})=(f,v_{h})\,,\quad \forall v_{h}\in {V}_{h} \\[3mm] 
a_{\|}(u_{h}^{\eps ,\sigma },w_{h})-\eps a_{\|}(\xi_{h}^{\eps ,\sigma
},w_{h})-\sigma (\xi_{h}^{\eps ,\sigma },w_{h})=0,\quad \forall w_{h}\in {V}%
_{h}\,.%
\end{array}%
\right.  \label{Phsig0}
\end{equation}

Let us decompose now ${V}_{h}=G_{h}\oplus A_{h}$ with $G_{h}=V_{h}\cap 
\mathcal{G}=V_{h}\cap 
\mathcal{G}_{\mathcal L}$ and $A_{h}$ being the $L^{2}-$orthogonal complement of $G_{h}$.
Taking test functions from $G_{h}$ in the second equation of (\ref{Phsig0}),
we see that $\xi_{h}^{\eps ,\sigma }\in A_{h}$ so that this problem can be
in fact equivalently rewritten as: Find $(u_{h}^{\eps ,\sigma },\xi_{h}^{%
\eps ,\sigma })\in {V}_{h}\times {A}_{h}$ such that%
\begin{equation}
(AP_{\mathcal{S}}')_{h}^{\eps ,\sigma }\,\,\,\left\{ 
\begin{array}{l}
a(u_{h}^{\eps ,\sigma },v_{h})+(1-\eps )a_{\|}(\xi_{h}^{\eps ,\sigma
},v_{h})=(f,v_{h})\,,\quad \forall v_{h}\in {V}_{h} \\[3mm] 
a_{\|}(u_{h}^{\eps ,\sigma },w_{h})-\eps a_{\|}(\xi_{h}^{\eps ,\sigma
},w_{h})-\sigma (\xi_{h}^{\eps ,\sigma },w_{h})=0,\quad \forall w_{h}\in {A}%
_{h}\,.%
\end{array}%
\right.  \label{Phsig}
\end{equation}%
The advantage of the last reformulation is purely analytical, as we can now
reintroduce the mesh dependent norm on $A_{h}$%
\begin{equation*}
|\xi_{h}|_{*h}=\sup_{v_{h}\in {V}_{h}}\frac{a_{\|}(\xi_{h},v_{h})}{%
|v_{h}|_{\mathcal{V}}}\,.
\end{equation*}%
We now equip the space $X_{h}:={V}_{h}\times {A}%
_{h}$ with the norm $\|u_{h},\xi_{h}\|_{X_{\eps ,\sigma
,h}}:=(|u_{h}|_{V}^{2}+|\xi_{h}|_{*h}^{2}+\eps
|\xi_{h}|_{\|}^{2}+\sigma |\xi_{h}|_{L^{2}}^{2})^{1/2}$. By Lemma \ref%
{infsupsig}, the bilinear form $C_{\eps ,\sigma }$ is continuous on $(X_{h}, ||\cdot,\cdot||_{X_{\eps ,\sigma
,h}})$ and enjoys the inf-sup property%
\begin{equation}
\inf_{(u_{h},\xi_{h})\in X_{h}}\sup_{(v_{h},w_{h})\in X_{h}}\frac{C_{\eps %
,\sigma }((u_{h},\xi_{h}),(v_{h},w_{h}))}{|u_{h},\xi_{h}|_{X_{\eps ,\sigma
,h}}|v_{h},w_{h}|_{X_{\eps ,\sigma ,h}}}\geq \beta \,,  \label{infsuphsig}
\end{equation}%
with a constant $\beta >0$ that does not depend neither on the mesh nor on $%
\eps $ and $\sigma$. This implies

\begin{theorem}
\textbf{(Discrete Existence/Uniqueness/$\sigma,\eps$-Convergences)} The
discrete AP-problem (\ref{Phsig0}) admits a unique solution $(u_{h}^{\eps%
,\sigma },\xi_{h}^{\eps,\sigma })\in {V}_{h}\times {V}_{h}$, satisfying 
\begin{equation*}
\|u_{h}^{\eps,\sigma },\xi_{h}^{\eps,\sigma }\|_{X_{\eps,\sigma,h}}\leq \frac{1%
}{\beta} \|f\|_{\mathcal{V}^{\prime }}\,.
\end{equation*}%
Moreover, for any $\varepsilon \geq 0$ fixed, one has the convergences 
\begin{equation*}
u_{h}^{\eps,\sigma }\rightarrow _{\sigma \rightarrow 0}u_{h}^{\eps%
,0}\,;\quad \quad \xi_{h}^{\eps,\sigma }\rightarrow _{\sigma \rightarrow
0}\xi_{h}^{\eps,0} \quad \,,
\end{equation*}%
where $(u_{h}^{\eps,0},\xi_{h}^{\eps,0})\in V_{h}\times A_{h}$ is the unique
solution to (\ref{Phsig}) with $\sigma =0.$ We also have 
\begin{equation*}
u_{h}^{\eps,0}\rightarrow _{\eps\rightarrow 0}u_{h}^{0,0}\,;\quad \quad
\xi_{h}^{\eps,0}\rightarrow _{\eps\rightarrow 0}\xi_{h}^{0,0}\quad \,,
\end{equation*}%
where $(u_{h}^{0,0},\xi_{h}^{0,0})\in V_{h}\times A_{h}$ is the unique
solution to (\ref{Phsig}) with $\varepsilon =\sigma =0.$

The condition number of the matrix corresponding to problem (\ref{Phsig0}) is bounded
by a constant that depends on $\sigma$ but not on $\eps$ (assuming that the same bases
of $V_{h}$ and $L_h$ are chosen for all values of $\eps,\sigma$).
\end{theorem}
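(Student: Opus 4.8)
The plan is to reproduce the three-part argument used for Theorems~\ref{APproph} and~\ref{MainThCont}, now carrying the extra parameter $\sigma$ along.

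\emph{Well-posedness and stability.} First I would work with the equivalent reformulation $(AP_{\mathcal{S}}')_h^{\eps,\sigma}$ of (\ref{Phsig}), posed on $X_h=V_h\times A_h$. Applying Lemma~\ref{infsupsig} with $V=V_h$, $L=A_h$ equipped with $|\cdot|_{\|}$, $\tilde L=A_h$ equipped with $|\cdot|_{*h}$, $\hat L=A_h$ equipped with $\|\cdot\|_{L^2}$ and $b=a_{\|}$ --- whose inf-sup constant in the $|\cdot|_{*h}$-norm equals $1$ by the very definition of that norm --- yields the continuity of $C_{\eps,\sigma}$ on $(X_h,\|\cdot,\cdot\|_{X_{\eps,\sigma,h}})$ together with the inf-sup bound (\ref{infsuphsig}), with $\beta$ independent of $h$, $\eps$ and $\sigma$. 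The BNB theorem \cite{ern} then provides a unique $(u_h^{\eps,\sigma},\xi_h^{\eps,\sigma})\in V_h\times A_h$ satisfying $\|u_h^{\eps,\sigma},\xi_h^{\eps,\sigma}\|_{X_{\eps,\sigma,h}}\le\beta^{-1}\|f\|_{\mathcal{V}'}$. Testing the second equation of (\ref{Phsig0}) with $w_h\in G_h$ and using $\nabla_{\|}w_h=0$ forces $\sigma(\xi_h^{\eps,\sigma},w_h)=0$, i.e.\ $\xi_h^{\eps,\sigma}\in A_h$; this is exactly the equivalence between (\ref{Phsig0}) and (\ref{Phsig}) provided by the splitting $V_h=G_h\oplus A_h$, and it transfers both the well-posedness and the a priori bound to the implemented problem (\ref{Phsig0}).

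\emph{The $\sigma\to0$ and $\eps\to0$ convergences.} Here $h$ is fixed, so all spaces are finite-dimensional. Subtracting (\ref{Phsig}) at $\sigma=0$ from (\ref{Phsig}) at $\sigma$ gives, for every $(v_h,w_h)\in X_h$,
\[
C_{\eps,0}\big((u_h^{\eps,\sigma}-u_h^{\eps,0},\xi_h^{\eps,\sigma}-\xi_h^{\eps,0}),(v_h,w_h)\big)=\sigma\,(\xi_h^{\eps,\sigma},w_h)\,.
\]
Using the inf-sup property of $C_{\eps,0}$ on $X_h$ in the $\|\cdot,\cdot\|_{X_{\eps,0,h}}$-norm, the finite-dimensional norm equivalences $\|w_h\|_{L^2}\le c_h\|v_h,w_h\|_{X_{\eps,0,h}}$ and $\|\xi_h\|_{L^2}\le c_h|\xi_h|_{*h}$ on $A_h$ (with $c_h$ independent of $\eps$ and $\sigma$), and the uniform bound $|\xi_h^{\eps,\sigma}|_{*h}\le\beta^{-1}\|f\|_{\mathcal{V}'}$ from the previous step, one obtains $\|u_h^{\eps,\sigma}-u_h^{\eps,0},\xi_h^{\eps,\sigma}-\xi_h^{\eps,0}\|_{X_{\eps,0,h}}=O(\sigma)\to 0$. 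The convergence as $\eps\to0$ of $(u_h^{\eps,0},\xi_h^{\eps,0})$ towards $(u_h^{0,0},\xi_h^{0,0})$ is established identically, the defect now being $\eps\,a_{\|}(\xi_h^{\eps,0},v_h+w_h)$, controlled by $\eps\,|\xi_h^{\eps,0}|_{\|}\,|v_h+w_h|_{\|}$ and then again by the finite-dimensional norm equivalences and the uniform bound on $|\xi_h^{\eps,0}|_{*h}$. (Equivalently, one may simply note that the matrix of (\ref{Phsig0}) depends affinely on $(\eps,\sigma)$ and remains invertible on the whole closed parameter range by the first step, hence its solution is a continuous function of $(\eps,\sigma)$.)

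\emph{Condition number.} I would follow the condition-number part of the proof of Theorem~\ref{APproph}. Writing the multiplier in a basis of $V_h$ adapted to the $L^2$-orthogonal splitting $V_h=G_h\oplus A_h$, the structure found above shows that the matrix of (\ref{Phsig0}) becomes block diagonal, $\mathrm{diag}\big(\mathbb{A}_{\mathrm{red}}^{\eps,\sigma},-\sigma M_{G_h}\big)$, where $M_{G_h}$ is the fixed SPD mass matrix of $G_h$ and $\mathbb{A}_{\mathrm{red}}^{\eps,\sigma}$ is the matrix of the reduced problem (\ref{Phsig}). For $\mathbb{A}_{\mathrm{red}}^{\eps,\sigma}$ the estimate (\ref{ESTii}) carries over: the continuity constant $M$ and the inf-sup constant $\beta$ are $\eps$- and $\sigma$-independent, and the extra term $\sigma\|\xi_h\|_{L^2}^2$ in $\|\cdot,\cdot\|_{X_{\eps,\sigma,h}}$ only improves the lower norm equivalence, so that $cond_2(\mathbb{A}_{\mathrm{red}}^{\eps,\sigma})$ stays bounded independently of both $\eps$ and $\sigma$. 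The only genuine $\sigma$-dependence comes through $\|(\sigma M_{G_h})^{-1}\|_2=\sigma^{-1}\|M_{G_h}^{-1}\|_2$, which blows up as $\sigma\to0$, as it must since (\ref{Phsig0}) is singular at $\sigma=0$. Since the $2$-condition number of a block-diagonal matrix equals the maximum of the blockwise ones, and since passing back to the standard finite element basis only multiplies the estimate by the $\eps,\sigma$-independent condition number of the (block-identity times) change-of-basis matrix, we conclude that $cond_2$ of the matrix of (\ref{Phsig0}) is bounded by a constant depending on $h$ and $\sigma$ but not on $\eps$.

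The main obstacle is concentrated in the last step: one has to verify that peeling off the $G_h$-block is legitimate --- that no hidden $\eps$- or $\sigma$-blow-up enters through the change of basis --- and that, when tracking the $\sigma$-dependence of the norm-equivalence constants that define $\|\cdot,\cdot\|_{X_{\eps,\sigma,h}}$, the resulting bound is genuinely $\eps$-uniform, the whole $\sigma$-degeneracy being located, harmlessly for the statement, in the $-\sigma M_{G_h}$ block. The first two steps are routine transcriptions of the inflow-case arguments.
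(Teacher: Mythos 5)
Your first two steps are correct and essentially reproduce the paper's argument: well-posedness via Lemma~\ref{infsupsig} applied on $X_h=V_h\times A_h$ with the $h$-dependent norm $|\cdot|_{*h}$ (for which the discrete inf-sup constant is indeed $1$ by construction) plus BNB, and the equivalence of (\ref{Phsig0}) and (\ref{Phsig}) through $V_h=G_h\oplus A_h$. For the $\sigma\to0$ convergence you subtract the two problems and test with $C_{\eps,0}$, controlling the defect $\sigma(\xi_h^{\eps,\sigma},w_h)$ by finite-dimensional ($h$-dependent) norm equivalences; the paper instead keeps $C_{\eps,\sigma}$, gets the defect $\sigma(\xi_h^{\eps,0},w_h)$, and absorbs $\|w_h\|_{L^2}$ into the $\sigma\|\cdot\|_{L^2}^2$ component of $\|\cdot\|_{X_{\eps,\sigma,h}}$, obtaining an $O(\sqrt\sigma)$ rate with mesh-independent constants. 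Both yield the stated fixed-$h$ convergence; the paper's variant is the one that survives uniformly in $h$.

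On the condition number your route genuinely differs: you block-diagonalize $\mathbb{A}^{\eps,\sigma}$ into $\mathrm{diag}\bigl(\mathbb{A}_{\mathrm{red}}^{\eps,\sigma},-\sigma M_{G_h}\bigr)$ via the $L^2$-orthogonal splitting, whereas the paper works directly in the standard basis, extends the norm $\|\cdot\|_{X_{\eps,\sigma,h}}$ to all of $V_h\times V_h$ (where on $G_h$ only the $\sigma\|\cdot\|_{L^2}^2$ term is nonzero, whence the factor $\min(\mu_u^2,\sigma\hat\mu^2)$ in the analogue of (\ref{ESTii})). Your decomposition is legitimate (since $a_{\|}(\cdot,w)$ and $a_{\|}(\xi,\cdot)$ vanish on $G_h$ and $(G_h,A_h)$ are $L^2$-orthogonal, the form decouples exactly) and arguably cleaner, because it makes explicit where the inf-sup of Lemma~\ref{infsupsig} is actually available. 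However, one intermediate claim is false as stated: the $2$-condition number of a block-diagonal matrix is \emph{not} the maximum of the blockwise condition numbers; it equals $\max_i\|A_i\|_2\cdot\max_i\|A_i^{-1}\|_2$, which can exceed every $\mathrm{cond}_2(A_i)$. Taken literally, your claim would give $\max\bigl(\mathrm{cond}_2(\mathbb{A}_{\mathrm{red}}^{\eps,\sigma}),\mathrm{cond}_2(M_{G_h})\bigr)$, i.e.\ a bound \emph{independent} of $\sigma$ — contradicting your own (correct) observation that $\|(\sigma M_{G_h})^{-1}\|_2=\sigma^{-1}\|M_{G_h}^{-1}\|_2$ blows up. With the correct formula, $\|\mathbb{A}^{\eps,\sigma}\|_2\le\max\bigl(\|\mathbb{A}_{\mathrm{red}}^{\eps,\sigma}\|_2,\sigma\|M_{G_h}\|_2\bigr)$ stays bounded and $\|(\mathbb{A}^{\eps,\sigma})^{-1}\|_2\le\max\bigl(\|(\mathbb{A}_{\mathrm{red}}^{\eps,\sigma})^{-1}\|_2,\sigma^{-1}\|M_{G_h}^{-1}\|_2\bigr)\lesssim\sigma^{-1}$, so the product is $C(h)/\sigma$, $\eps$-independent — exactly the paper's conclusion. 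This is a fixable slip, not a fatal gap, but it should be corrected before the block-diagonal argument is sound.
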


\begin{proof}
The existence and uniqueness of a solution $(u_{h}^{\eps,\sigma },\xi_{h}^{%
\eps,\sigma })\in V_{h}\times V_{h}$ for each $\eps\geq 0$, $\sigma >0$ is a
simple consequence of the BNB theorem \cite{ern}. As mentioned already this
solution lies in fact in $V_{h}\times A_{h}$ and it is thus also the
solution to (\ref{Phsig}). By the same arguments, the latter problem admits
a unique solution $(u_{h}^{\eps,\sigma },\xi_{h}^{\eps,\sigma })\in
V_{h}\times A_{h}$ also in the case $\sigma =0$. To prove the convergence $%
(u_{h}^{\eps,\sigma },\xi_{h}^{\eps,\sigma })\rightarrow (u_{h}^{\eps%
,0},\xi_{h}^{\eps,0})$ for $\eps\geq 0$ fixed, we observe%
\begin{align*}
&C_{\eps,\sigma }((u_{h}^{\eps,\sigma }-u_{h}^{\eps,0},\xi_{h}^{\eps,\sigma
}-\xi_{h}^{\eps,0}),(v_{h},w_{h})) =\sigma (\xi_{h}^{\eps,0},w_{h}) \leq
\sigma \|\xi_{h}^{\eps,0}\|_{L^{2}}\|w_{h}\|_{L^{2}} \\
&\qquad \leq \sqrt{\sigma }\,\|\xi_{h}^{\eps,0}\|_{L^{2}}\|v_{h},w_{h}\|_{X_{\eps%
,\sigma ,h}}, 
 \hspace{4cm} \forall (v_{h},w_{h})\in {V}_{h}\times {A}_{h}\,,
\end{align*}%
and we conclude using the discrete inf-sup property for $C_{\eps,\sigma }$.
The proof of the other convergence $\varepsilon \rightarrow 0$ while $\sigma
=0$ is done exactly in the same way as in Theorem \ref{APproph}.

We turn now to the study of the condition number. We recall the notations from the proof of Theorem \ref{APproph} with the only change that there is no longer the space $L_h$, which has been replaced by $V_h$. 
In particular, the constants $\mu_q$, $\nu_q$, $\mu_*$, $\nu_*$ are now evaluated on $V_h$ instead of $L_h$ and one can have $\mu_q=\mu_*=0$.
Denoting by $\hat{\mu}$ and $\hat{\nu}$ the minimal and maximal
eigenvalues of the mass matrix $(\phi^u_i, \phi^u_j)_{L^2 (\Omega)}$ we
conclude for any $\eps, \sigma \ge 0$ and any $\Phi_h=(u_h,\xi_h) \in V_h \times V_h$
\[ \min (\mu_u^2, \sigma \hat{\mu}^2) ||  \vec{\Phi} ||^2_2 \leq
   || \Phi_h ||^2_{X_{\eps, \sigma, h}} \leq \max \{\nu_u^2, \nu_{\ast}^2 +
   \eps \hspace{0.17em} \nu_q^2 + \sigma \hat{\nu}^2 \} ||  \vec{\Phi} ||_2
   \,.
\]
Introducing the matrix of problem (\ref{Phsig0}), denoted by $\mathbb{A}^{\eps, \sigma}$, and repeating the calculations of Theorem \ref{APproph} we obtain
\[ || \mathbb{A}^{\eps, \sigma} ||_2 \leq M \max \{\nu_u^2, \nu_{\ast}^2 + \eps
   \nu_q^2 + \sigma \hat{\nu}^2 \}\,, \qquad  \beta' \min
   (\mu_u^2, \sigma \hat{\mu}^2) ||  \vec{\Phi} ||_2 \leq ||
   \mathbb{A}^{\eps, \sigma}  \vec{\Phi} ||_2\,, \]
for any $\vec{\Phi} \in \mathbb{R}^N$, so that one has finally
\[ cond_2 (\mathbb{A}^{\eps, \sigma}) = || \mathbb{A}^{\eps, \sigma} ||_2 || (\mathbb{A}^{\eps,
   \sigma})^{- 1} ||_2 \leq \frac{M \max \{\nu_u^2, \nu_{\ast}^2 + \eps
   \nu_q^2 + \sigma \hat{\nu}^2 \}}{\beta \min (\mu_u^2,
   \sigma \hat{\mu}^2)}\,, \]
which is an $\eps$-independent bound.
\end{proof}

\begin{remark}\label{rmk:CondEstSigma} 
As already observed in Remark \ref{CondEst} we have 
  \begin{equation*}
    \hat\mu\sim \hat\nu\sim h,\quad
    \mu _{u}\sim h\text{ and }\nu _{u}\sim \nu _{q} \sim \nu _{*}\sim 1\,.
  \end{equation*}%
  Hence, assuming $\eps,\sigma\in [0,1]$, one obtains
  \begin{equation*}
    cond_{2}(\mathbb{A}^{\varepsilon,\sigma})\lesssim \frac{1}{\sigma h^{2}}\,.
  \end{equation*}
\end{remark}

\begin{theorem} \textbf{($h$-Convergence)}
\label{thm:erestsig}
Let $k\geq 1$ and $V_{h}$ be the $P_{k}$ or $Q_{k}$ finite element space on
a regular mesh ${\mathcal{T}}_{h}$. Suppose moreover that problem (\ref%
{APstab}) has the solution $u^{\eps ,\sigma }\in H^{k+1}(\Omega )$, $\xi^{%
\eps ,\sigma }\in H^{k+1}(\Omega )$ and problem (\ref{APA}) has a solution $%
u^{\eps }\in H^{1}(\Omega )$, $\xi^{\eps }\in H^{1}(\Omega )$. Then 
\begin{equation}
|u^{\eps}-u_{h}^{\eps,\sigma }|_{H^{1}}\leq Ch^{k}(|u^{\eps ,\sigma
}|_{H^{k+1}}+|\xi^{\eps ,\sigma }|_{H^{k+1}})+C\sigma |\xi^{\eps }|_{H^{1}}\,,
\label{erestsig}
\end{equation}%
with a constant $C>0$ that depends neither on the mesh, nor on $\eps$ or $%
\sigma $.
\end{theorem}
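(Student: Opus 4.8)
The plan is to split the error by the triangle inequality, $|u^{\eps}-u_h^{\eps,\sigma}|_{H^1}\le |u^{\eps}-u^{\eps,\sigma}|_{H^1}+|u^{\eps,\sigma}-u_h^{\eps,\sigma}|_{H^1}$, and to bound the two pieces separately. The first term is the modeling error between the stabilized continuous problem $(AP_{\mathcal S})^{\eps,\sigma}$ and the unstabilized one $(AP_{\mathcal A})^{\eps}$; since $\xi^{\eps}\in H^1(\Omega)$ by hypothesis, Theorem \ref{convepssig} applies and gives $|u^{\eps}-u^{\eps,\sigma}|_{H^1}\le c\,\sigma\,|\xi^{\eps}|_{H^1}$. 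The second term is the genuine finite element error for the single problem $(AP_{\mathcal S})^{\eps,\sigma}$, which I would estimate by a C\'ea-type argument patterned on the proof of Theorem \ref{thm:Err_est}, now based on the stabilized form $C_{\eps,\sigma}$, the stabilized discrete inf-sup condition \eqref{infsuphsig}, and the mesh-dependent norm $\|\cdot\|_{X_{\eps,\sigma,h}}$.

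For this discretization part I would first record the Galerkin orthogonality $C_{\eps,\sigma}((u^{\eps,\sigma}-u_h^{\eps,\sigma},\xi^{\eps,\sigma}-\xi_h^{\eps,\sigma}),(v_h,w_h))=0$ for all $(v_h,w_h)\in V_h\times A_h$; this is legitimate because the second equation of \eqref{APstab} holds for \emph{every} $w\in\mathcal L$, and $A_h\subset V_h\subset\mathcal V\subset\mathcal L$, while the discrete problem \eqref{Phsig0} tests with all $w_h\in V_h\supset A_h$ and its solution satisfies $\xi_h^{\eps,\sigma}\in A_h$. Next I would pick the interpolants $\hat{u}_h\in V_h$, the nodal interpolant of $u^{\eps,\sigma}$, and $\hat{\xi}_h:=P_{A_h}(I_h\xi^{\eps,\sigma})\in A_h$, that is, the $L^2(\Omega)$-orthogonal projection onto $A_h$ of the nodal interpolant $I_h\xi^{\eps,\sigma}$. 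The merit of this choice is that, since $\xi^{\eps,\sigma}\in\mathcal A$ is $L^2$-orthogonal to $\mathcal{G_L}\supset G_h$, one has $\xi^{\eps,\sigma}-\hat{\xi}_h=(\mathrm{Id}-P_{G_h})(\xi^{\eps,\sigma}-I_h\xi^{\eps,\sigma})$; as $P_{G_h}$ maps into $G_h\subset\mathcal{G_L}$, where $\nabla_{\|}$ vanishes, this yields at once $\|\xi^{\eps,\sigma}-\hat{\xi}_h\|_{L^2}\le\|\xi^{\eps,\sigma}-I_h\xi^{\eps,\sigma}\|_{L^2}$ and $|\xi^{\eps,\sigma}-\hat{\xi}_h|_{\|}=|\xi^{\eps,\sigma}-I_h\xi^{\eps,\sigma}|_{\|}$. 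Combined with $|\cdot|_{*h}\le|\cdot|_{\|}$ and $\eps,\sigma\le 1$, every $\xi$-contribution to $\|u^{\eps,\sigma}-\hat{u}_h,\xi^{\eps,\sigma}-\hat{\xi}_h\|_{X_{\eps,\sigma,h}}$ is $O(h^{k}|\xi^{\eps,\sigma}|_{H^{k+1}})$, and the $u$-contribution is $O(h^{k}|u^{\eps,\sigma}|_{H^{k+1}})$ by the standard interpolation estimate.

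Then I would run the C\'ea argument: by \eqref{infsuphsig} there is $(v_h,w_h)\in V_h\times A_h$ with $\|v_h,w_h\|_{X_{\eps,\sigma,h}}=1$ and $\beta'\|u_h^{\eps,\sigma}-\hat{u}_h,\xi_h^{\eps,\sigma}-\hat{\xi}_h\|_{X_{\eps,\sigma,h}}\le C_{\eps,\sigma}((u_h^{\eps,\sigma}-\hat{u}_h,\xi_h^{\eps,\sigma}-\hat{\xi}_h),(v_h,w_h))$; by Galerkin orthogonality the right-hand side equals $C_{\eps,\sigma}((u^{\eps,\sigma}-\hat{u}_h,\xi^{\eps,\sigma}-\hat{\xi}_h),(v_h,w_h))$, which by the continuity of $C_{\eps,\sigma}$ against the mesh-dependent norm — exactly as in Theorem \ref{thm:Err_est}, the cross term $a_{\|}(u^{\eps,\sigma}-\hat{u}_h,w_h)$ being absorbed into $|w_h|_{*h}$ — is bounded by $c(|u^{\eps,\sigma}-\hat{u}_h|_{\mathcal V}^{2}+|\xi^{\eps,\sigma}-\hat{\xi}_h|_{*h}^{2}+\eps|\xi^{\eps,\sigma}-\hat{\xi}_h|_{\|}^{2}+\sigma\|\xi^{\eps,\sigma}-\hat{\xi}_h\|_{L^2}^{2})^{1/2}\le c\,h^{k}(|u^{\eps,\sigma}|_{H^{k+1}}+|\xi^{\eps,\sigma}|_{H^{k+1}})$. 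Since $|u^{\eps,\sigma}-u_h^{\eps,\sigma}|_{H^1}\le|u^{\eps,\sigma}-\hat{u}_h|_{H^1}+\|u_h^{\eps,\sigma}-\hat{u}_h,\xi_h^{\eps,\sigma}-\hat{\xi}_h\|_{X_{\eps,\sigma,h}}$, adding the interpolation bound for $u^{\eps,\sigma}$ and then the $c\sigma|\xi^{\eps}|_{H^1}$ bound from Theorem \ref{convepssig} yields \eqref{erestsig} with all constants independent of $h$, $\eps$ and $\sigma$.

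The step I expect to be the main obstacle is the treatment of the Lagrange multiplier: $\xi^{\eps,\sigma}$ lives in the constrained space $\mathcal A$ but must be approximated inside the discrete constrained space $A_h$, and the approximant has to be $O(h^{k})$-accurate simultaneously in three heterogeneous norms (the $h$-dependent $|\cdot|_{*h}$ and the $\eps$- and $\sigma$-weighted norms) that assemble $\|\cdot\|_{X_{\eps,\sigma,h}}$; using $\hat{\xi}_h=P_{A_h}(I_h\xi^{\eps,\sigma})$ together with the facts that $\mathrm{Id}-P_{G_h}$ leaves $\nabla_{\|}$ untouched and does not increase the $L^2$-norm is what makes this work with no loss of a power of $h$. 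A secondary, purely technical point, inherited from Theorem \ref{thm:Err_est}, is that the continuity of $C_{\eps,\sigma}$ invoked above holds precisely because of the mesh-dependent norm $|\cdot|_{*h}$, which absorbs the cross term $a_{\|}(u^{\eps,\sigma}-\hat{u}_h,w_h)$ and lets the $h$-, $\eps$- and $\sigma$-uniform inf-sup constant of \eqref{infsuphsig} do the rest.
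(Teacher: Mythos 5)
Your proof follows the same route as the paper's own (two-line) proof: the triangle inequality combined with the $\sigma$-convergence bound \eqref{eq:sigma_err} of Theorem \ref{convepssig} for the modeling error, plus a C\'ea-type argument based on the discrete inf-sup condition \eqref{infsuphsig} and the mesh-dependent norm, carried out exactly as in Theorem \ref{thm:Err_est}, for the discretization error. The one substantive detail you add --- building the approximant of $\xi^{\eps,\sigma}$ inside $A_h$ by $L^2$-projecting the nodal interpolant and checking that this costs nothing in the $L^2$, $|\cdot|_{\|}$ and $|\cdot|_{*h}$ components --- is exactly what the paper leaves implicit behind ``in the same way as in the inflow case'', and is a correct way to fill that gap.
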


\begin{proof}
In the same way as in the inflow case we prove that 
\begin{equation*}
|u^{\eps,\sigma }-u_{h}^{\eps,\sigma }|_{H^{1}}\leq Ch^{k}(|u^{\eps %
}|_{H^{k+1}}+|\xi^{\eps }|_{H^{k+1}}).
\end{equation*}%
It remains to invoke Lemma \ref{qepssig} and the triangle inequality to
conclude.
\end{proof}

\begin{remark} \label{Rem22}
The error estimate (\ref{erestsig}) would be of course useless if the norms $%
|u^{\eps ,\sigma }|_{H^{k+1}}$, $|\xi^{\eps ,\sigma }|_{H^{k+1}}$ were
dependent on $\eps $ and $\sigma $. Fortunately, it is not the case. We
expect indeed that $|u^{\eps ,\sigma }|_{H^{k+1}}$ is bounded uniformly in $%
\eps $ by the norm of $f$ in $H^{k-1}(\Omega )$ and $|\xi^{\eps ,\sigma
}|_{H^{k+1}}$ is bounded uniformly in $\eps $ by the norm of $f$ in $%
H^{k+1}(\Omega )$. This can be easily proved in the case of simple aligned
geometry, see Appendix A. 
\end{remark}
\begin{remark}
One can also easily obtain
\begin{equation*}
|\xi^{\eps,\sigma }-\xi_{h}^{\eps,\sigma }|_{H^{1}}\leq \frac{C}{\sqrt\eps}\left[
(|u^{\eps ,\sigma}|_{H^{k+1}}+|\xi^{\eps ,\sigma }|_{H^{k+1}})+\sigma |\xi^{\eps }|_{H^{1}}
\right]
\end{equation*}
which degenerates as in the inflow case, as $\eps$ goes to 0. Again, we are not sure, if this estimate is sharp, but we recall that $\xi^{\eps,\sigma }$ is an auxiliary variable, without any intrinsic meaning.
\end{remark}

\section{Numerical tests}\label{sec:num_tests}

Let us now study numerically both AP-reformulations, the inflow as well as the stabilized one. We consider in the following a
square computational domain $\Omega =[0,1]\times [0,1]$ and the non-uniform and not coordinate-aligned $b$ field:
\begin{gather}
  b = \frac{B}{|B|}\, , \quad
  B =
  \left(
    \begin{array}{c}
      \alpha  (2y-1) \cos (\pi x) + \pi \\
      \pi \alpha  (y^2-y) \sin (\pi x)
    \end{array}
  \right)\,,
  \label{eq:J99a} 
\end{gather}
as well as a sample function $u^0$ which is constant in the direction of the $b$-field:
\begin{gather}
  u^{0} = \sin \left(\pi y +  \alpha (y^2-y)\cos (\pi x) \right)
  \label{eq:J79a}.
\end{gather}
Here $\alpha \ge 0$ is a parameter to be fixed in the following different test cases and describes the variations of $b$. We choose $u^0$ to be the $\eps \rightarrow 0$ limit solution of the anisotropic problem $(P)^\eps$, hence solution of \eqref{LL},  and construct an exact solution of \eqref{PPP} by adding a perturbation proportional to $\varepsilon $, {\it i.e.}
\begin{gather}
  u^{\varepsilon } = \sin \left(\pi y +\alpha (y^2-y)\cos (\pi
  x) \right) + \varepsilon \cos \left( 2\pi x\right) \sin \left(
  \pi y+  \alpha (y^2-y)\cos (\pi x) 
  \right)\,.
  \label{eq:Jc0a}
\end{gather}
Note that the
auxiliary variable $q^\varepsilon $, solution of \eqref{Pa}, is in this case equal to
\begin{gather}
  q^\varepsilon = \cos \left( 2\pi x\right) \sin \left(
  \pi y+  \alpha (y^2-y)\cos (\pi x)
  \right)
  -
  \sin \left(
  \pi y+  \alpha (y^2-y)\cos (\pi x)
  \right)
  \label{eq:Jffb}.
\end{gather}

Finally, we compute the right hand side accordingly and have thus constructed an exact solution of problem \eqref{PPP}. All simulations
(unless stated otherwise) were performed using a $\mathbb Q _2$ finite
element method.\\

Aim  of this section is to study and validate from a numerical point of view the error estimates established in the last two sections. In particular, we investigate firstly the error introduced by the stabilization procedure
in the $(AP_{\mathcal S})^{\varepsilon ,\sigma }$ formulation, meaning the $\sigma$-convergence estimate of (\ref{eq:sigma_err}) in Theorem
\ref{qepssig} is verified numerically. Then the $h$-convergence of
both methods is studied and the estimates (\ref{Err_est}) and
(\ref{erestsig}) are confirmed in both anisotropic ($\eps \ll 1$) and isotropic $(\eps\sim 1)$ regimes. Next, we show that both methods are Asymptotic-Preserving in the parameter $\eps$. 
The conditioning of the corresponding linear systems appear effectively to scale
in agreement with Remarks \ref{CondEst} and \ref{rmk:CondEstSigma}.
Finally, the case of a less regular force term $f$, belonging merely to $L^2(\Omega )$ (and not
to $H^1(\Omega )$) is studied --- the convergence of the schemes is
tested beyond the validity of Theorems \ref{thm:Err_est} and
\ref{thm:erestsig}.

\subsection{Stabilization error ($\eps \ge 0$, $h>0$ fixed, $\sigma \rightarrow 0$)}

Let us start by studying the error introduced by a stabilization term proportional to $\sigma $ in the $(AP_S)^{\varepsilon ,\sigma }$ reformulation, in particular we shall estimate numerically for fixed $\eps \ge 0$ and $h>0$ the $L^2$- resp. $H^1$-errors between the exact solution $u^\eps$ constructed in \eqref{eq:Jc0a} and the numerical stabilized solution $u^{\eps,\sigma}_{h}$, solution of \eqref{APstab} or \eqref{Phsig0}, {\it i.e.} $||u^\eps-u_{h}^{\eps,\sigma}||$. The
mesh size $h$ is fixed to $0.01$.  Numerical simulations are performed
for the stabilization constant $\sigma $ varying from $1$ to $10^{-15}$, considering three different regimes : no anisotropy ($\varepsilon =1$), strong
anisotropy with direction aligned with the coordinate system
($\varepsilon = 10^{-10}$, $\alpha =0$) and strong anisotropy with
variable direction ($\varepsilon =10^{-10}$, $\alpha =2$). The $L^2$- and $H^1$-errors are presented as a function of $\sigma $ in Figure \ref{fig:sigma_error}.\\
In the first regime, with no anisotropy
present in the system, the stabilization constant does not influence
the precision at all. Indeed, it is exactly what is expected as the terms
involving $\xi^{\varepsilon ,\sigma }$ do not appear for $\eps =1$ in the first
equation of $(AP_{\mathcal S})^{\varepsilon ,\sigma }$. In the second
regime, with strong and aligned anisotropy, the precision of the
scheme is influenced by the stabilization procedure only for $\sigma$-values greater than $10^{-5}$ in the $L^2$-norm and greater than
$10^{-3}$ in the $H^1$-norm. 
The error dependence in $\sigma$ is here linear, according to the Theorem \ref{qepssig}, and is explained simply by the fact that the stabilization influences the results for large $\sigma >0$.

For $\sigma $-values  smaller than
these critical values the accuracy of the scheme in both
norms remains unchanged and is given only by the mesh size. This
holds true even if the value of the stabilization constant is close
to the machine precision ($10^{-15}$) and can be explained by the fact that we are in an aligned test-case. Indeed, normally for $\sigma \rightarrow 0$ the error should increase, due to the non-uniqueness of the $\xi$-solution. Here we are however plotting the error corresponding to the $u^\eps$-function, which is uniquely determined. The non-uniqueness of $\xi^{\eps,\sigma}$ steps in only in the not-aligned case, which is our third regime of strong anisotropy with variable
direction. In this case, the curves show an expected $\sigma$-behaviour, the optimal value of $\sigma $ being between $10^{-8}$ and
$10^{-5}$ for the $L^2$-error and between $10^{-10}$ and $10^{-3}$ for
the $H^1$-norm. To explain this, observe that in the limit $\sigma \rightarrow 0$ the auxiliary function $\xi^{\eps,\sigma}$ is uniquely determined up to a constant on the field lines. This constant will normally not interfere in the computation of $u^{\eps,\sigma}$, as only the parallel derivatives of $\xi^{\eps,\sigma}$ are present in the $u^{\eps,\sigma}$-equation. However, if the mesh is not aligned with the field lines, this parallel derivative mixes the directions, introducing errors which lead to the observed behaviour of the error as $\sigma \rightarrow 0$.\\

\def\xxxa{0.49\textwidth}
\begin{figure}
  \centering
  \subfigure[$L^2$ error]
  {\includegraphics[angle=-90,width=\xxxa]{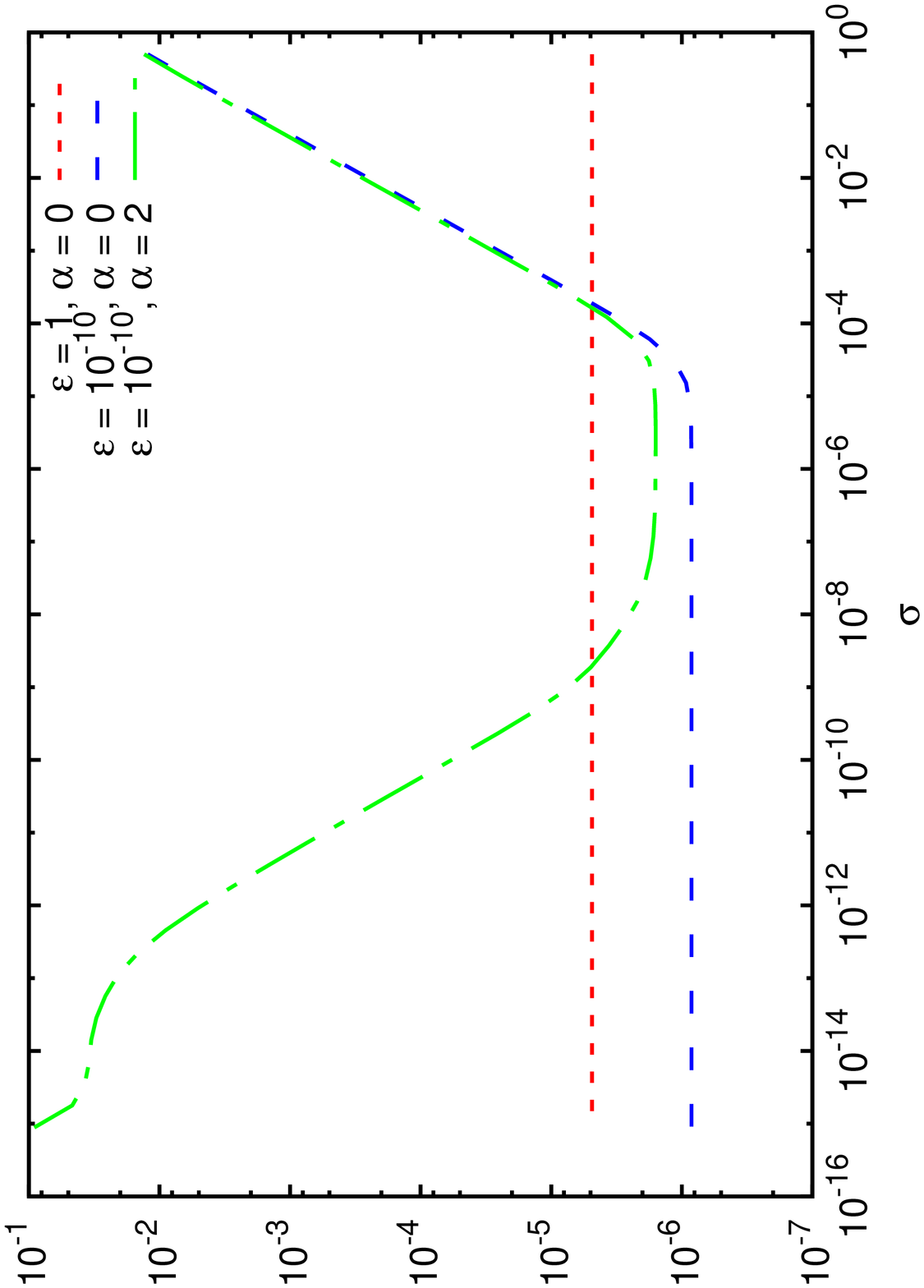}}
  \subfigure[$H^1$ error]
  {\includegraphics[angle=-90,width=\xxxa]{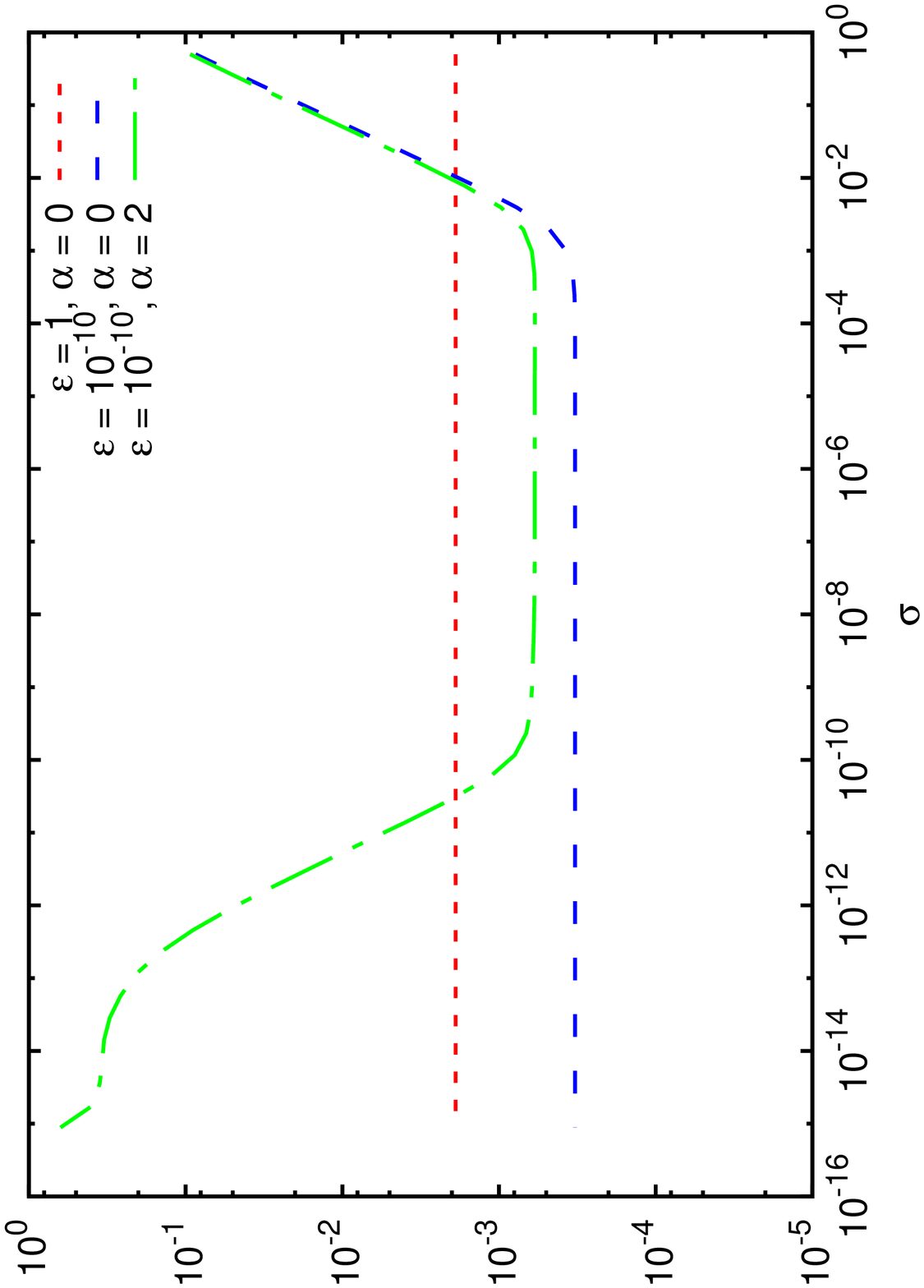}}
  
  \caption{ Absolute error $||u^\eps-u_{h}^{\eps,\sigma}||_{L^2}$ (on the left) and $||u^\eps-u_{h}^{\eps,\sigma}||_{H^1}$ (on the right) with respect to the exact solution $u^\eps$, as a function of $\sigma $, for
    $h=0.01$ and three regimes : no anisotropy, strong and aligned
    anisotropy as well as strong anisotropy with variable direction.}
  \label{fig:sigma_error}
\end{figure}

Having tested the $\sigma$-dependence of the error $||u^\eps-u_{h}^{\eps,\sigma}||$ for fixed $h>0$, we are now interested in how these curves remodel for different $h$-meshes. The $\sigma$-convergence is hence compared for different mesh sizes in
the most difficult setting, that is to say when a strong anisotropy
with variable direction is present in the system ($\varepsilon =10^{-10}$,
$\alpha =2$). Numerical simulations were performed for
the mesh size ranging from $0.1$ to $0.003125$. Cumulative results are
presented on the Figure \ref{fig:sigma_error2}. The plateau for which
the accuracy of the scheme does not depend on the stabilization
parameter is clearly dependent on the mesh size. As a consequence, the value of $\sigma $ should be clearly made mesh
dependent. We observe that in the case of $\mathbb Q_2$ finite
elements the upper and lower bounds for the optimal value scale like
$h^3$ and $h^4$ for the $L^2$-error, while for the $H^1$-error the
respective scaling is approximately $h^2$ and $h^6$. It is therefore
reasonable to put $\sigma = h^3$ (or $\sigma =h^2$ if one is
interested in the $H^1$-precision only). Note that this scaling
depends on the finite element method used. In general, if a
$\mathbb P_k$ (or $\mathbb Q_k$) method is used, the optimal choice of
$\sigma$ is $h^{k+1}$, which ensures optimal $h$-convergence of the
method in the $L^2$-norm.

\def\xxxa{0.49\textwidth}
\begin{figure}
  \centering
  \subfigure[$L^2$ error]
  {\includegraphics[angle=-90,width=\xxxa]{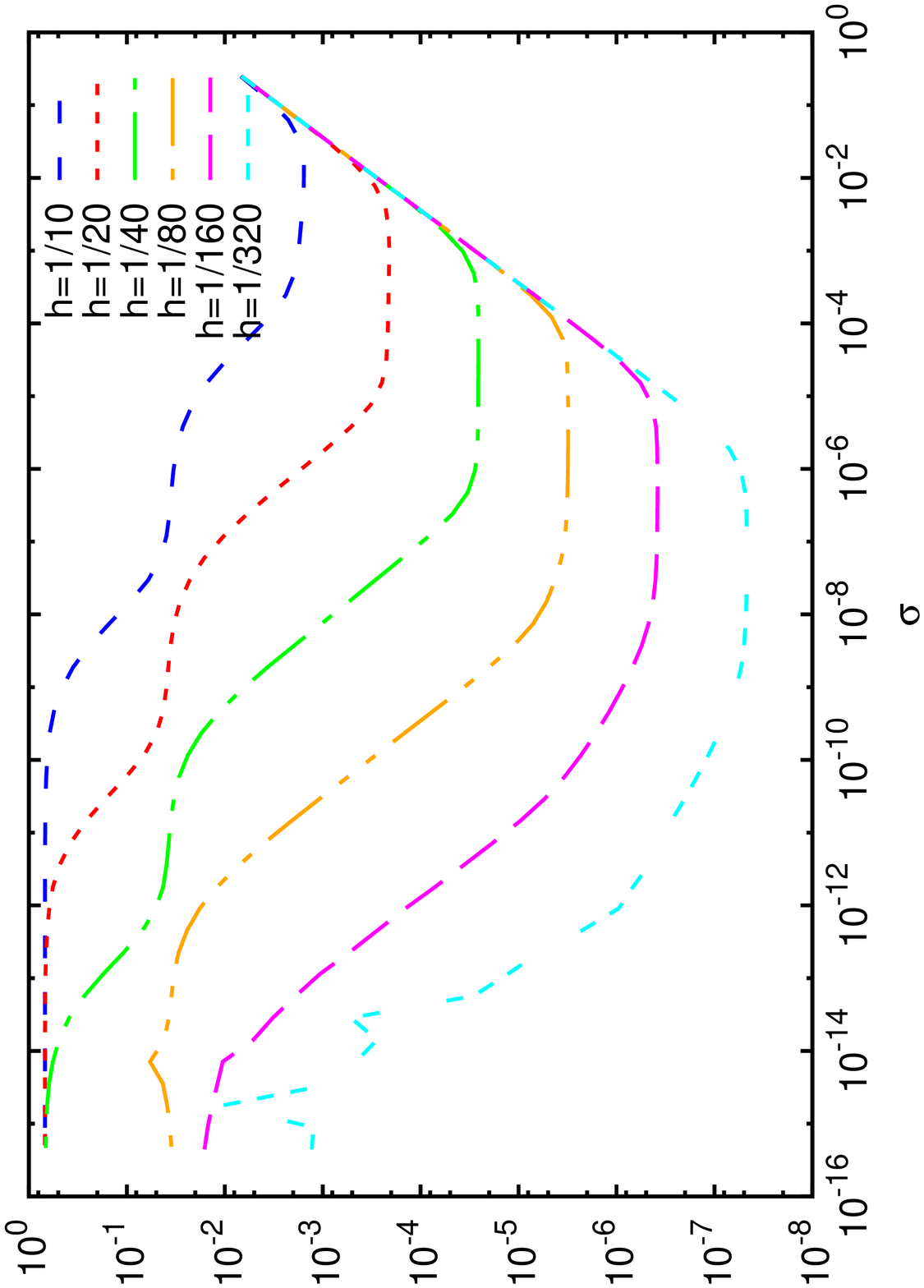}}
  \subfigure[$H^1$ error]
  {\includegraphics[angle=-90,width=\xxxa]{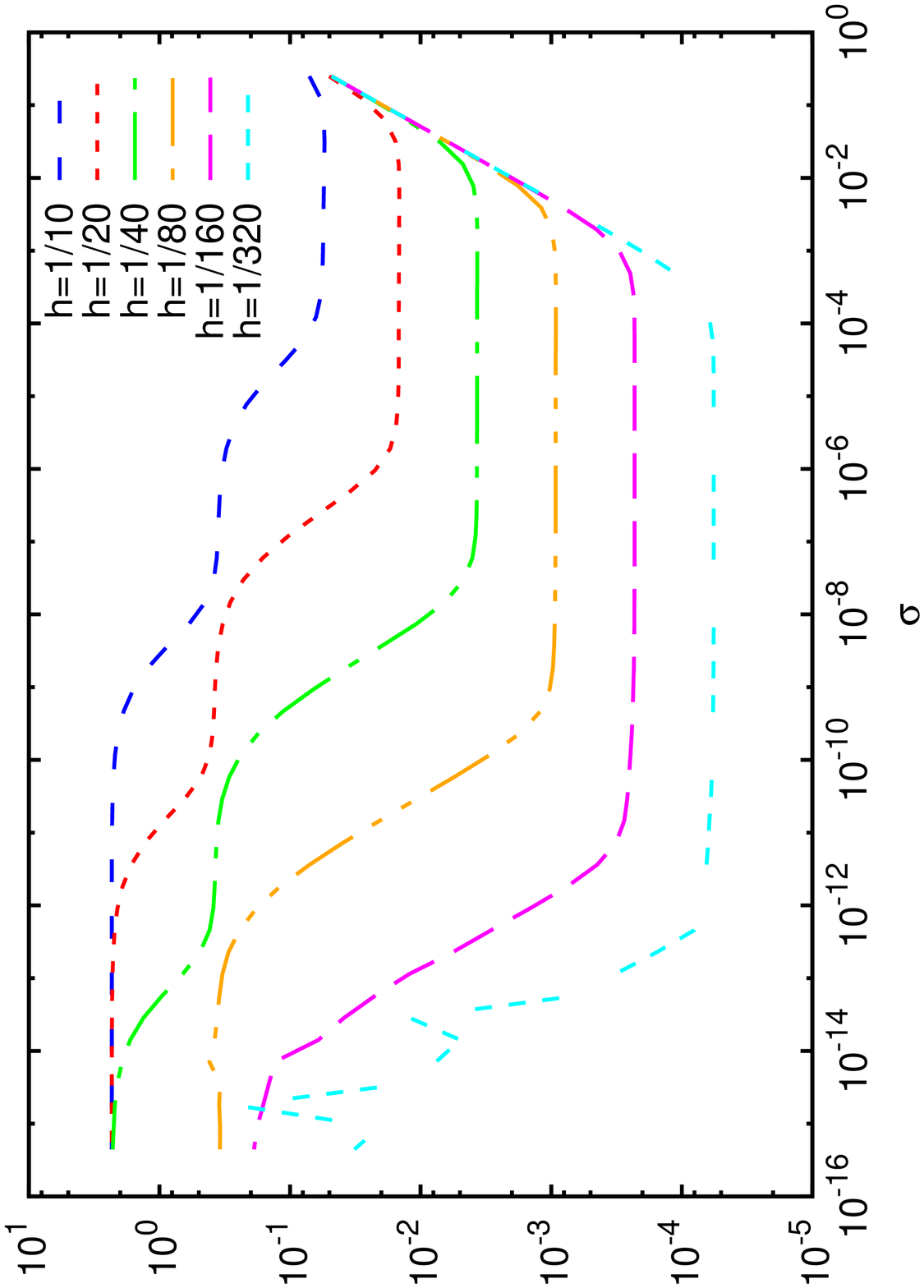}}
  
  \caption{Absolute error $||u^\eps-u_{h}^{\eps,\sigma}||_{L^2}$ (on the left) and $||u^\eps-u_{h}^{\eps,\sigma}||_{H^1}$  (on the right) as a function of $\sigma $, for
    different values of $h$ and for $\varepsilon =10^{-10}$ and
    $\alpha =2$.}
  \label{fig:sigma_error2}
\end{figure}

\subsection{$h$-convergence ($\eps \ge 0$ fixed, $\sigma=h^3$, $h \rightarrow 0$)}

Let us now turn our attention to the $h$-convergence of both
Asymptotic Preserving reformulations \eqref{Pa_bis} and \eqref{APstab}. Since the AP-scheme with inflow
boundary conditions was studied in a previous work \cite{DLNN} we are
mainly interested in the behaviour of the scheme with
stabilization. As in the previous subsection, numerical tests are
performed in three regimes : an isotropic one ($\varepsilon =1$ and
$\alpha =0$) and two anisotropic regimes ($\varepsilon =10^{-10}$ and
$\alpha =0$ or $\alpha =2$ ). The stabilization coefficient is set to
$\sigma = h^3$ as a consequence of the last subsection. The convergence rate in the $L^2$- and $H^1$-norms is
presented on Figure \ref{fig:h_conv_stab}. As expected the optimal
convergence rate (of a ${\mathbb Q}_2$-FEM) is found in both norms. Next we compare the results
with the convergence of the AP-scheme with inflow boundary
conditions in Tables \ref{tab:h_conv_L2} and
\ref{tab:h_conv_H1}. Note that in the case of no anisotropy or
anisotropy aligned with the coordinate system ($\alpha =0$), both
schemes give quasi exactly the same precision for both $L^2$ and $H^1$-norms.  In the last
regime the stabilized scheme is slightly less accurate compared to the
$(AP_{in})^{\eps }$ scheme. A small loss of the
convergence rate of the stabilized scheme is observed for the smallest
mesh size in both norms.
\begin{table}
  \centering
  \begin{tabular}{|c||c|c||c|c||c|c|}
    \hline
    \multirow{2}{*}{$h$} &
    \multicolumn{2}{|c||}{\rule{0pt}{2.5ex}$\varepsilon=1$, $\alpha=0$} &
    \multicolumn{2}{|c||}{$\varepsilon=10^{-10}$, $\alpha=0$} &
    \multicolumn{2}{|c|}{$\varepsilon=10^{-10}$, $\alpha=2$} \\
    \cline{2-7} 
    \rule{0pt}{2.5ex}
    &$(AP_{in})^{\eps }$&$(AP_{\mathcal{S}})^{\eps,\sigma }$
    &$(AP_{in})^{\eps }$&$(AP_{\mathcal{S}})^{\eps,\sigma }$
    &$(AP_{in})^{\eps }$&$(AP_{\mathcal{S}})^{\eps,\sigma }$\\
    \hline
    \rule{0pt}{2.5ex}$0.1$ &       $5.39\times 10^{-3}$ & $5.39\times 10^{-3}$ & $1.19\times 10^{-3}$ & $1.19\times 10^{-3}$ & $2.81\times 10^{-3}$ & $2.18\times 10^{-3}$ \\\hline
    \rule{0pt}{2.5ex}$0.05$ &      $6.97\times 10^{-4}$ & $6.97\times 10^{-4}$ & $1.49\times 10^{-4}$ & $1.49\times 10^{-4}$ & $3.16\times 10^{-4}$ & $2.87\times 10^{-4}$ \\\hline
    \rule{0pt}{2.5ex}$0.025$ &     $8.79\times 10^{-5}$ & $8.79\times 10^{-5}$ & $1.86\times 10^{-5}$ & $1.86\times 10^{-5}$ & $3.77\times 10^{-5}$ & $3.53\times 10^{-5}$ \\\hline
    \rule{0pt}{2.5ex}$0.0125$ &    $1.10\times 10^{-5}$ & $1.10\times 10^{-5}$ & $2.33\times 10^{-6}$ & $2.33\times 10^{-6}$ & $4.57\times 10^{-6}$ & $4.31\times 10^{-6}$ \\\hline
    \rule{0pt}{2.5ex}$0.00625$ &   $1.38\times 10^{-6}$ & $1.38\times 10^{-6}$ & $2.91\times 10^{-7}$ & $2.91\times 10^{-7}$ & $5.60\times 10^{-7}$ & $5.29\times 10^{-7}$ \\\hline
    \rule{0pt}{2.5ex}$0.003125$ &  $1.72\times 10^{-7}$ & $1.72\times 10^{-7}$ & $3.64\times 10^{-8}$ & $3.64\times 10^{-8}$ & $6.89\times 10^{-8}$ & $6.52\times 10^{-8}$ \\\hline
    \rule{0pt}{2.5ex}$0.0015625$ & $2.15\times 10^{-8}$ & $2.15\times 10^{-8}$ & $5.51\times 10^{-9}$ & $4.78\times 10^{-9}$ & $1.07\times 10^{-9}$ & $8.05\times 10^{-9}$ \\\hline
           
  \end{tabular}
  \caption{Comparison of the $L^2$ relative precision $||u^{\eps}-u^{\eps,\sigma}_h||_{L^2}/||u^{\eps,\sigma}_h||_{L^2}$ of both
    reformulations in both isotropic and anisotropic regimes for
    different mesh sizes and stabilization constant set to $\sigma=h^3$.}
  \label{tab:h_conv_L2}
\end{table}

\begin{table}
  \centering
  \begin{tabular}{|c||c|c||c|c||c|c|}
    \hline
    \multirow{2}{*}{$h$} &
    \multicolumn{2}{|c||}{\rule{0pt}{2.5ex}$\varepsilon=1$, $\alpha=0$} &
    \multicolumn{2}{|c||}{$\varepsilon=10^{-10}$, $\alpha=0$} &
    \multicolumn{2}{|c|}{$\varepsilon=10^{-10}$, $\alpha=2$} \\
    \cline{2-7} 
    \rule{0pt}{2.5ex}
    &$(AP_{in})^{\eps }$&$(AP_{\mathcal{S}})^{\eps,\sigma }$
    &$(AP_{in})^{\eps }$&$(AP_{\mathcal{S}})^{\eps,\sigma }$
    &$(AP_{in})^{\eps }$&$(AP_{\mathcal{S}})^{\eps,\sigma }$\\
    \hline
    \rule{0pt}{2.5ex}$0.1$ &       $4.48\times 10^{-2}$ & $4.48\times 10^{-2}$ & $1.46\times 10^{-2}$ & $1.46\times 10^{-2}$ & $2.44\times 10^{-2}$ & $2.33\times 10^{-2}$ \\\hline
    \rule{0pt}{2.5ex}$0.05$ &      $1.13\times 10^{-2}$ & $1.13\times 10^{-2}$ & $3.67\times 10^{-3}$ & $3.67\times 10^{-3}$ & $6.34\times 10^{-3}$ & $6.12\times 10^{-3}$ \\\hline
    \rule{0pt}{2.5ex}$0.025$ &     $2.84\times 10^{-3}$ & $2.84\times 10^{-3}$ & $9.19\times 10^{-4}$ & $9.19\times 10^{-4}$ & $1.60\times 10^{-3}$ & $1.54\times 10^{-3}$ \\\hline
    \rule{0pt}{2.5ex}$0.0125$ &    $7.11\times 10^{-4}$ & $7.11\times 10^{-4}$ & $2.30\times 10^{-4}$ & $2.30\times 10^{-4}$ & $3.99\times 10^{-4}$ & $3.83\times 10^{-4}$ \\\hline
    \rule{0pt}{2.5ex}$0.00625$ &   $1.78\times 10^{-4}$ & $1.78\times 10^{-4}$ & $5.75\times 10^{-5}$ & $5.75\times 10^{-5}$ & $9.93\times 10^{-5}$ & $9.53\times 10^{-5}$ \\\hline
    \rule{0pt}{2.5ex}$0.003125$ &  $4.45\times 10^{-5}$ & $4.45\times 10^{-5}$ & $1.44\times 10^{-5}$ & $1.44\times 10^{-5}$ & $2.46\times 10^{-5}$ & $2.37\times 10^{-5}$ \\\hline
    \rule{0pt}{2.5ex}$0.0015625$ & $1.11\times 10^{-5}$ & $1.11\times 10^{-5}$ & $3.76\times 10^{-6}$ & $3.76\times 10^{-6}$ & $6.08\times 10^{-6}$ & $5.87\times 10^{-6}$ \\\hline
           
  \end{tabular}
  \caption{Comparison of the $H^1$ relative precision $||u^{\eps}-u^{\eps,\sigma}_h||_{H^1}/||u^{\eps,\sigma}_h||_{H^1}$ of both
    reformulations in both isotropic and anisotropic regimes for
    different mesh sizes and stabilization constant set to $\sigma=h^3$.} 
  \label{tab:h_conv_H1}
\end{table}

\def\xxxa{0.49\textwidth}
\begin{figure}
  \centering
  \subfigure[$L^2$ error]
  {\includegraphics[angle=-90,width=\xxxa]{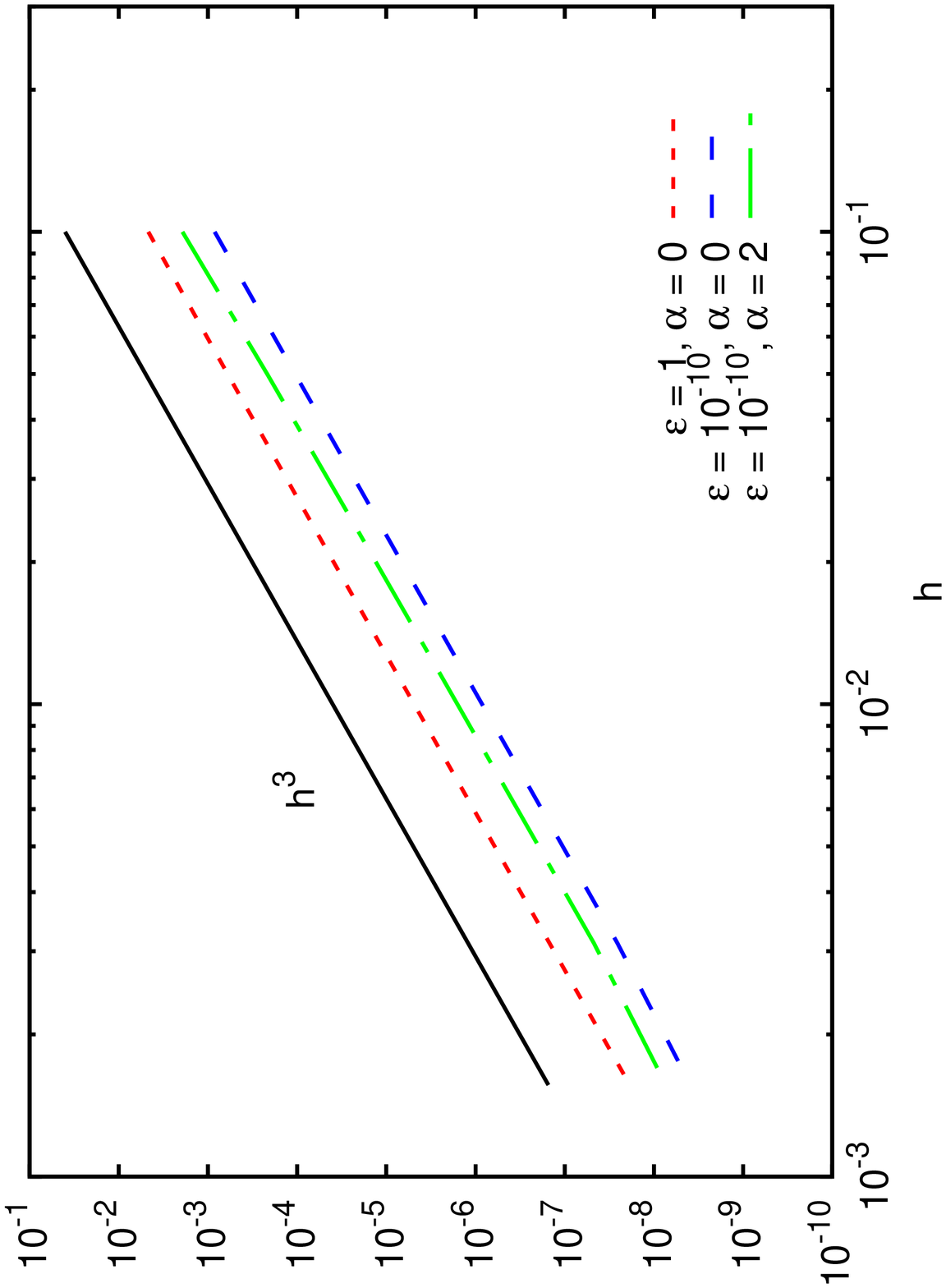}}
  \subfigure[$H^1$ error]
  {\includegraphics[angle=-90,width=\xxxa]{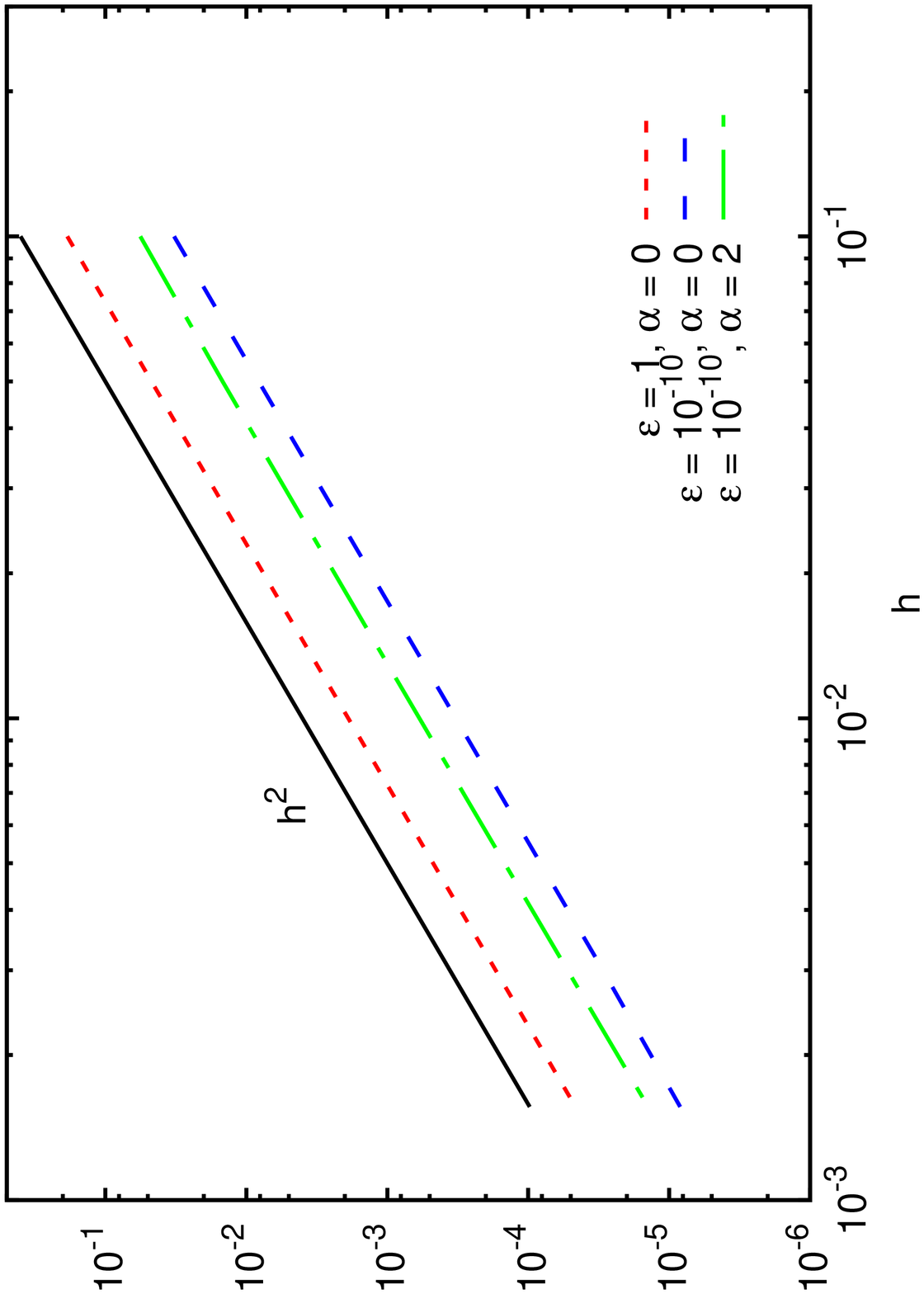}}
  
  \caption{Absolute error $||u^\eps-u_{h}^{\eps,\sigma}||_{L^2}$  (on the left) and $||u^\eps-u_{h}^{\eps,\sigma}||_{H^1}$  (on the right) as a function of $h$ and fixed $\sigma=h^3$. One
    isotropic regime ($\varepsilon =1$, $\alpha =0$) and two
    anisotropic ones: $\varepsilon =10^{-10}$ and $\alpha =0$ or
    $\alpha =2$ are investigated. The optimal convergence rate is found.}
  \label{fig:h_conv_stab}
\end{figure}

\subsection{AP-property ($h>0$ fixed, $\sigma=h^3$, $\eps \rightarrow 0$)}

Next, we test if both schemes are indeed Asymptotic Preserving as $\eps \rightarrow 0$. The
mesh size is fixed to $h=0.01$, $\sigma$ is set to $\sigma=h^3$ and numerical simulations are performed
for a variable anisotropy direction ($\alpha =2$) with an  anisotropy
strength $\eps$ varying from $10^{-20}$ to $10$. Both schemes exhibit the
desired property, as shown in Figure
\ref{fig:eps_conv}. In particular, the absolute error for both reformulations
and in both norms is independent of $\varepsilon $ (for
$\varepsilon < 0.1$). The error curves are practically indistinguishable. For large $\eps$-values, the errors are increasing due to the fact that the here presented schemes are designed to cope with $\eps \ll 1$ singularities.

\def\xxxa{0.49\textwidth}
\begin{figure}
  \centering
  \subfigure[$L^2$ error]
  {\includegraphics[angle=-90,width=\xxxa]{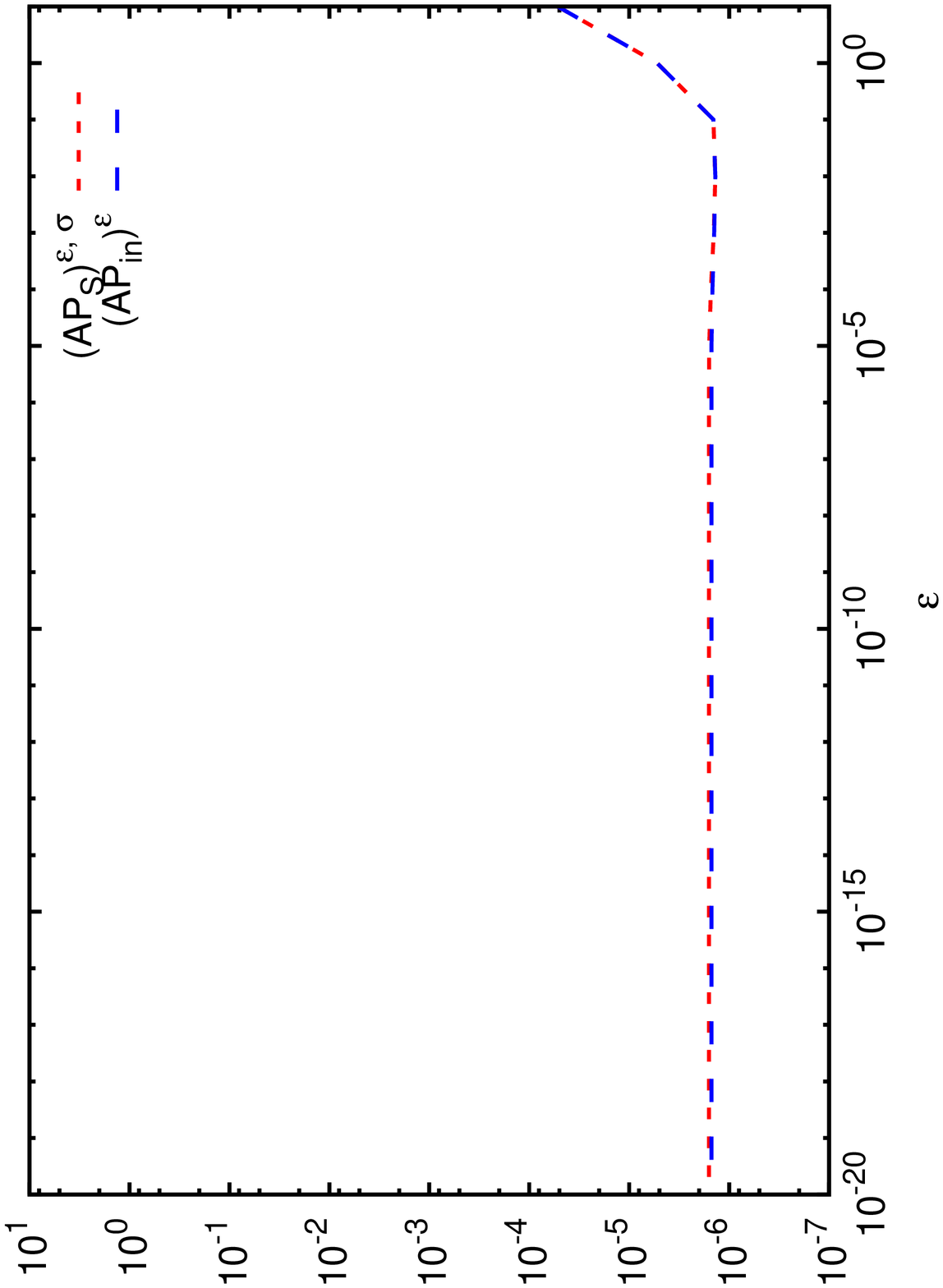}}
  \subfigure[$H^1$ error]
  {\includegraphics[angle=-90,width=\xxxa]{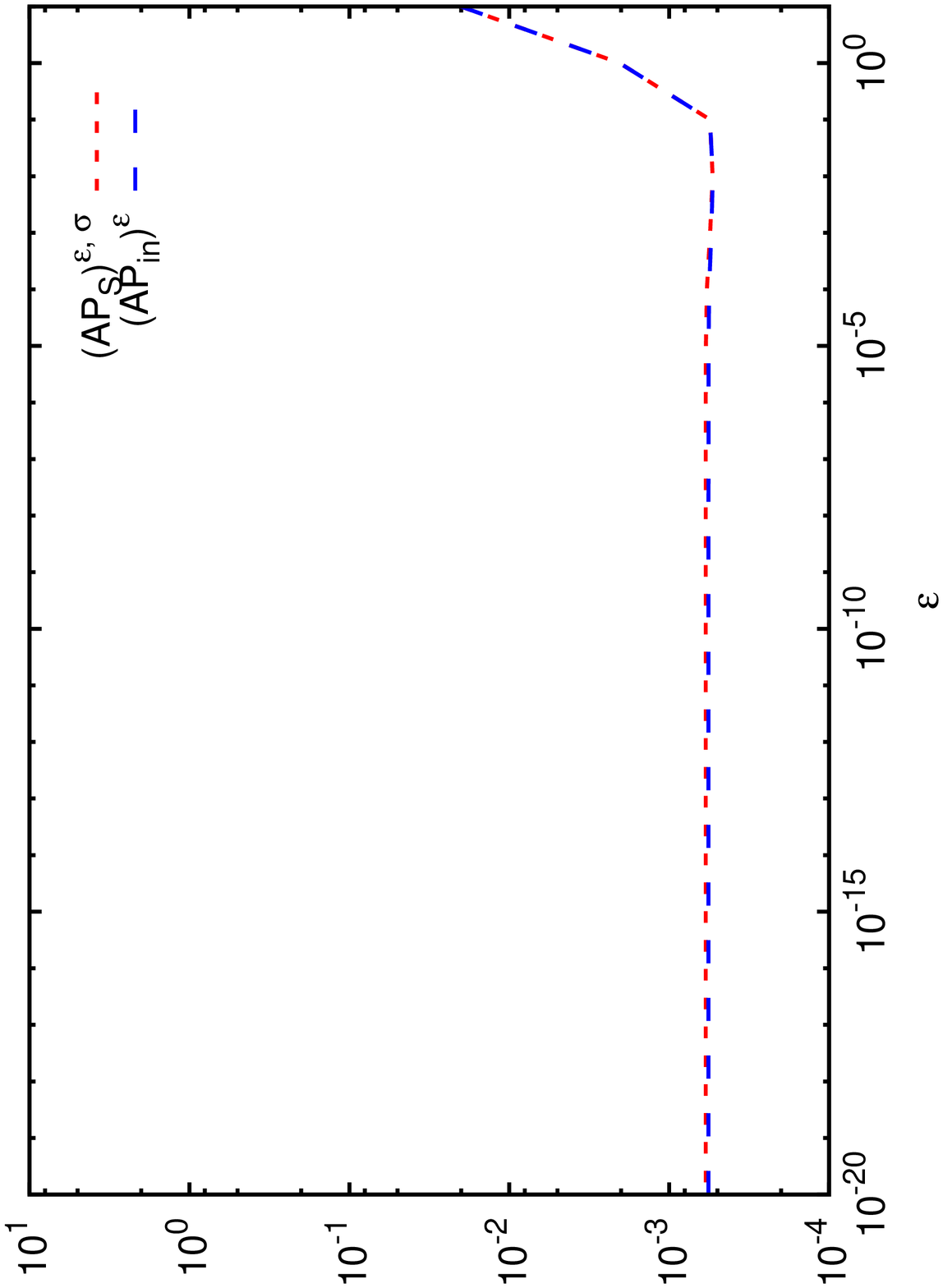}}
  
  \caption{Absolute error $||u_{ex}^\eps-u_{num}^{\eps,\sigma}||_{L^2}$ (on the left) and $||u_{ex}^\eps-u_{num}^{\eps,\sigma}||_{H^1}$  (on the right)  as a function
    of $\varepsilon $ for an anisotropy not aligned with the coordinate
    system ($\alpha =0$) and the mesh size $h=0.01$, $\sigma=h^3$. The error curves
    are superposed, both schemes show similar accuracy independently
    of $\varepsilon $.}
  \label{fig:eps_conv}
\end{figure}

\subsection{Matrix conditioning}

Finally, let us now turn our attention to the conditioning of the matrices
associated with the numerical resolution of both schemes $(AP_{in})^{\eps}$ resp. $(AP_{\mathcal{S}})^{\eps,\sigma }$. The strong anisotropy case with variable direction ($\alpha=2$) is considered for different mesh sizes $h>0$. The stabilization constant $\sigma $
is set to $h^3$ in the $(AP_S)^{\varepsilon ,\sigma }$ reformulation and the anisotropy strength $\varepsilon $ is set to $10^{-10}$. Sparse matrices were assembled in every
case and the condition number was estimated using the matlab function
\verb|condest()| returning the estimate of $cond_1$. The results are
displayed on Figure \ref{fig:cond_h}. As expected, the conditioning
scales as $1/h^4$ for the inflow reformulation and as
$1/\sigma h^2 = 1/h^5$ for the stabilized method. The first method
results in better conditioned matrices in this setting. However, if
one is interested mainly in the $H^1$ precision the stabilization
constant $\sigma $ could be set to $h^2$ resulting in a conditioning
proportional to $1/h^4$ for the stabilized method, discretized with the
$\mathbb Q_2$ finite elements.

\def\xxxa{0.49\textwidth}
\begin{figure}
  \centering
  {\includegraphics[angle=-90,width=\xxxa]{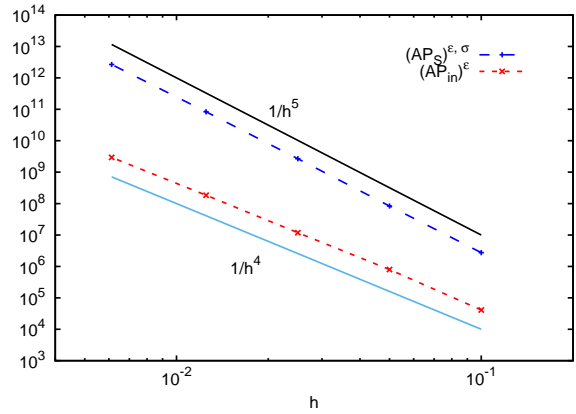}}
  
  \caption{Conditioning ($cond_1$) of the matrices associated with both AP
    schemes as a function of the mesh size for strong and nonaligned
    anisotropy ($\varepsilon =10^{-10}$, $\alpha =2$). The predicted
    scaling is found.}
  \label{fig:cond_h}
\end{figure}

\subsection{The case of $f\not\in H^1(\Omega )$}\label{sec:test_f_notH1}

Aim of this subsection is to investigate the error estimates in a case where the right hand side $f$ is less regular than supposed in the theoretical part of the last two sections.
All simulations in this section are preformed with a $\mathbb Q _1$
finite element method and the stabilization parameter in the
$(AP_{\mathcal S})^{\varepsilon ,\sigma }$-formulation is set to
$h^2$.  
In this case we have the $h$-estimates \eqref{Err_est} resp. \eqref{erestsig} with $k=1$ and we recall Remark \ref{Rem9} resp. \ref{Rem22}.
Let us now choose $u^0$ to be defined by
\begin{align}
  \begin{split}\label{eq:Jgfb}
    u^0 =&
    \left(
      \left(  y+  \alpha (y^2-y)\cos (\pi x)/\pi \right) ^2
      \ln \left(  y+  \alpha (y^2-y)\cos (\pi x)/\pi  \right) - 1.5
    \right)
    \\
    &+7.5
    \left( y+  \alpha (y^2-y)\cos (\pi x)/\pi \right)\,,
  \end{split}
\end{align}
so that the right hand side for the limit problem is a function that
belongs to $L^2$ and not to $H^1$. If $\alpha =0$ (the field is
aligned), then the right hand side of the limit problem equals to
$\ln y$.

We remind that in view of our
theoretical result, the $H^1$-norms of $q^\varepsilon $ and
$\xi^{\varepsilon, \sigma} $ are not guaranteed to be bounded if the
force term is not $H^1(\Omega )$. Nothing can be said on the convergence of the
numerical methods in this test case since the right hand side
(\ref{eq:Jgfb}) is not in $H^2(\Omega )$. We consider two anisotropic regimes
($\varepsilon =10^{-10}$): with anisotropy direction aligned with the
coordinate system ($\alpha =0$) resp. with variable direction ($\alpha =2$).

Numerical simulations show that the $H^1$-norms of $q_h^\varepsilon $ and
$\xi_h^{\varepsilon, \sigma} $ grow as $h$ approaches $0$ for the variable
anisotropy direction.  This seems to confirm our expectations.  On the
other hand, the $H^1$-norm remains constant when the anisotropy is
aligned with the coordinate system. The $L^2$-norm seems to be bounded
regardless of the method for both studied regimes. The results are
displayed on Figure \ref{fig:notL2_qnorm}. To our surprise, optimal
convergence rate of $u_h^\varepsilon $ and
$u_h^{\sigma ,\varepsilon} $ is conserved in the tested $h$ range ---
see Figure \ref{fig:notL2_conv}.

\def\xxxa{0.49\textwidth}
\begin{figure}
  \centering
  \subfigure[$L^2$ norm]
  {\includegraphics[angle=-90,width=\xxxa]{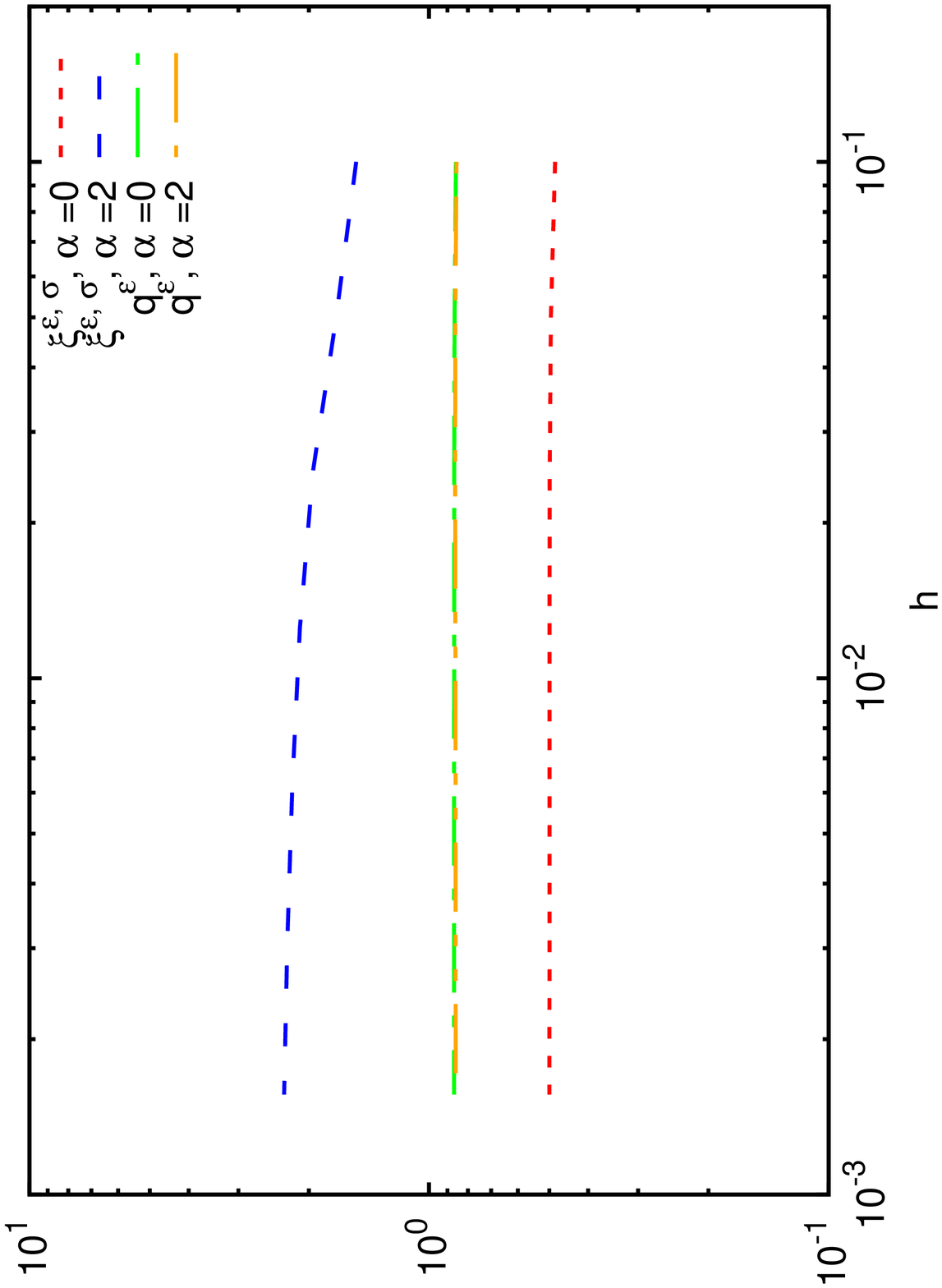}}
  \subfigure[$H^1$ norm]
  {\includegraphics[angle=-90,width=\xxxa]{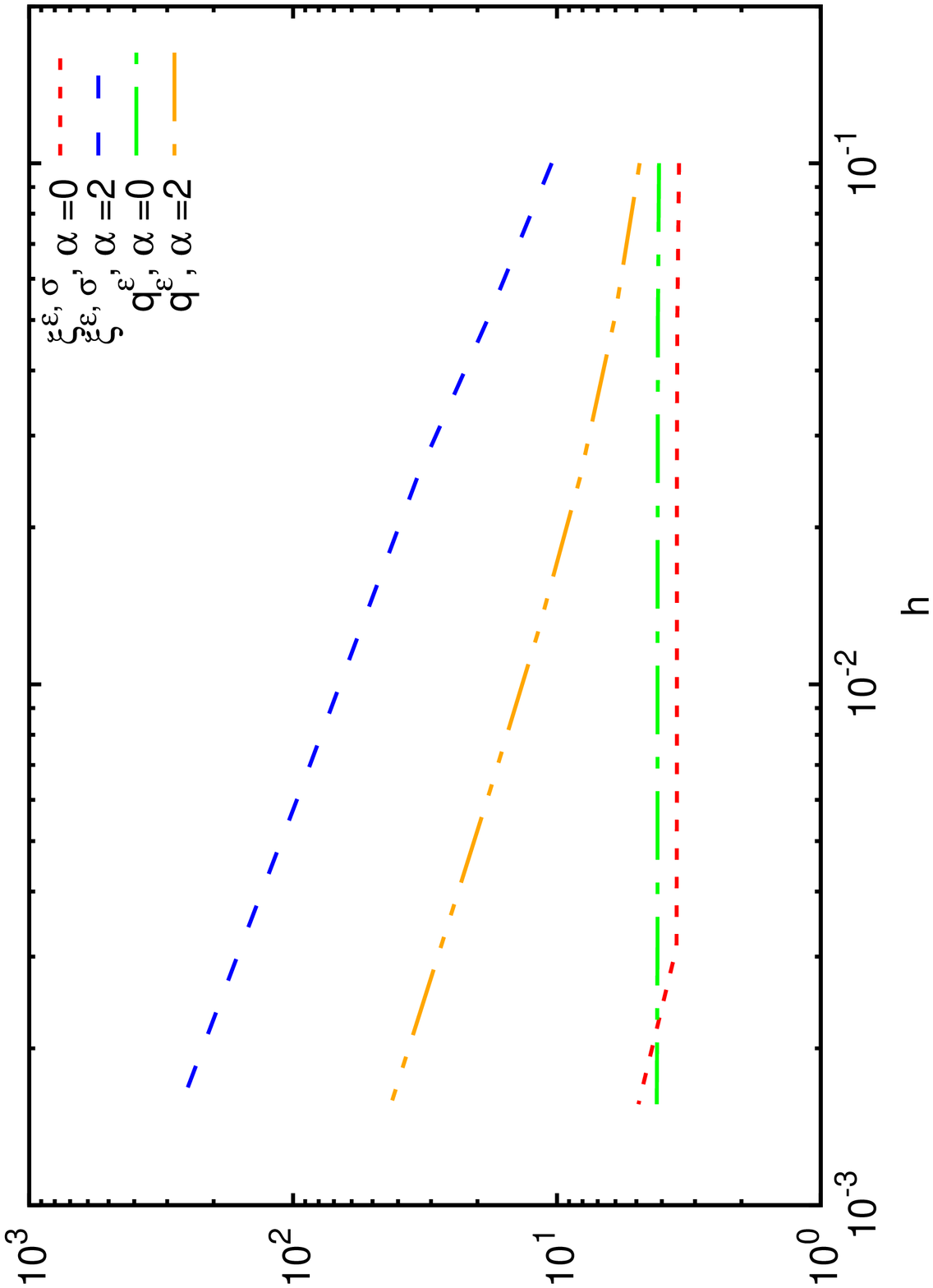}}
  
  \caption{$L^2$ (on the left) and $H^1$ (on the right) norms of
    $\xi^{\varepsilon, \sigma }$ and $q^{\varepsilon}$ as a function
    of the mesh size $h>0$, for $\varepsilon =10^{-10}$, $\sigma=h^2$  and $\alpha =0$ or
    $\alpha =2$. The $H^1$-norms of both auxiliary variables increase
    with decreasing mesh size for variable direction of
    anisotropy. The $L^2$-norms seem to be bounded. }
  \label{fig:notL2_qnorm}
\end{figure}
\def\xxxa{0.49\textwidth}
\begin{figure}
  \centering
  \subfigure[$L^2$ norm]
  {\includegraphics[angle=-90,width=\xxxa]{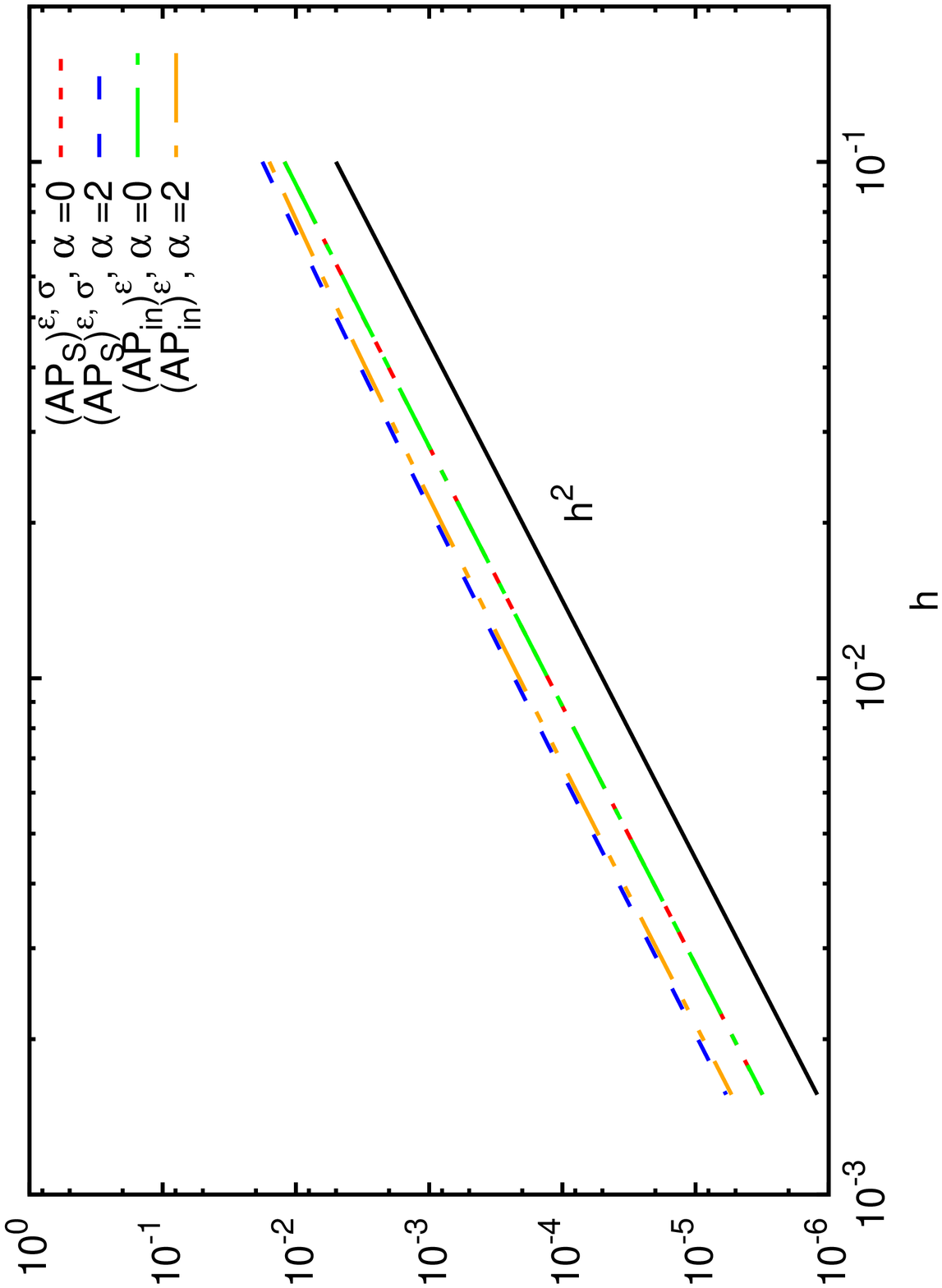}}
  \subfigure[$H^1$ norm]
  {\includegraphics[angle=-90,width=\xxxa]{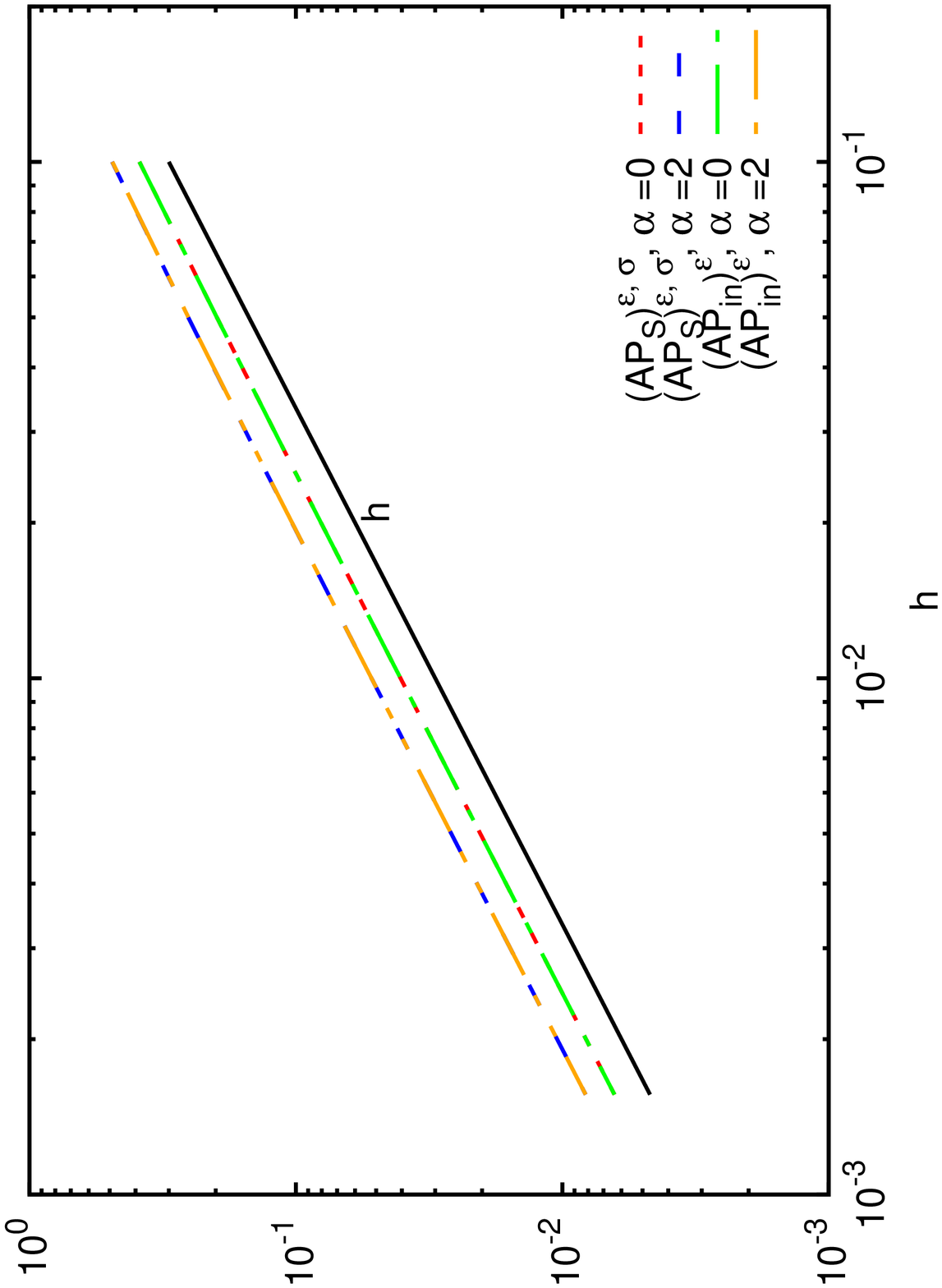}}
  
  \caption{Absolute error $||u^\eps-u_{h}^{\eps,\sigma}||_{L^2}$ (on the left) and $||u^\eps-u_{h}^{\eps,\sigma}||_{H^1}$  (on the right)  as a function
    of the mesh size $h>0$, for $\varepsilon =10^{-10}$, $\sigma=h^2$ and $\alpha =0$ or
    $\alpha =2$. Optimal convergence rate is observed for both
    methods and both anisotropy configurations.}
  \label{fig:notL2_conv}
\end{figure}

\section{Conclusions}

\label{SEC} 
A detailed numerical analysis of some asymptotic-preserving numerical
schemes, designed to cope with highly anisotropic elliptic problems, was
carried out in the present work. In particular, we have shown rigorously that in
the limit regimes where traditional schemes become inadequate, AP-schemes
are perfectly able to capture the macroscopic behavior of the solution.
Convergence results for the schemes were proven, with an accuracy and
stability which are shown to be $\eps$-independent, $\eps$ being the
perturbation parameter responsible for the stiffness of the problem. The
development of AP-schemes is based on asymptotic arguments and permit hence
to create a link between the various scales in the considered problem, while
the numerical parameters remain independent on the stiffness parameter. 

\FloatBarrier
\appendix

\section{The regularity of the solution in the case of a simple geometry}

Consider the case of $\Omega =(0,\pi )^{2}$ and the field $b$ looking
upwards, {\it i.e.} $b=e_y$. Moreover let $A_{||}=1$ and $A_\perp = Id$. We want to explore in this Appendix the regularity of the solution $(u^{\eps
,\sigma },\xi^{\eps ,\sigma })$ to (\ref{APstab}) when $f$ belongs to $H^s(\Omega)$ and considering an aligned geometry case.\\
To start, let us first remark that the functions $\{ \sqrt{2 / \pi}\, \sin kx \}_{k \ge 1}$ as well as $\{  \sqrt{2 / \pi}\,\cos lx \}_{l \ge 0}$ form an orthogonal basis in $L^2(0,\pi)$ \cite{BRE}, such that each $f \in L^2(\Omega)$ can now be written under the form
\begin{equation*}
f(x,y)=\sum_{k=1}^{\infty }\sum_{l=0}^{\infty }f_{kl}\sin kx\cos ly\,, \quad \{f_{kl} \}_{k,l \in \NN} \subset l^2\,,
\end{equation*}%
which implies immediately that
\begin{equation*}
u^{\eps ,\sigma }(x,y)=\sum_{k=1}^{\infty }\sum_{l=0}^{\infty }\frac{1}{%
k^{2}+l^{2}+\frac{(1-\eps )l^{4}}{\eps l^{2}+\sigma }}f_{kl}\sin kx\cos ly\,,
\end{equation*}%
\begin{equation*}
\xi^{\eps ,\sigma }(x,y)=\sum_{k=1}^{\infty }\sum_{l=1}^{\infty }\frac{l^{2}}{(\eps %
l^{2}+\sigma )(k^{2}+l^{2})+(1-\eps )l^{4}}f_{kl}\sin kx\cos ly.
\end{equation*}%
Now, if $f\in H^{s}(\Omega)$, Parseval's equality permits to show that
\begin{equation*}
|f|_{H^{s}}^{2}\sim \sum_{k=1}^{\infty }\sum_{l=0}^{\infty
}(k^{2}+l^{2})^{s}f_{kl}^{2}\,,
\end{equation*}%
so that%
\begin{equation*}
|u^{\eps ,\sigma }|_{H^{s+2}}^{2}\sim \sum_{k=1}^{\infty }\sum_{l=0}^{\infty
}\frac{(k^{2}+l^{2})^{s+2}}{\left( k^{2}+l^{2}+\frac{(1-\eps )l^{4}}{\eps %
l^{2}+\sigma }\right) ^{2}}f_{kl}^{2}\leq \sum_{k=1}^{\infty
}\sum_{l=0}^{\infty }(k^{2}+l^{2})^{s}f_{kl}^{2}\sim |f|_{H^{s}}^{2}\,,
\end{equation*}%
\begin{equation*}
|\xi^{\eps ,\sigma }|_{H^{s}}^{2}\sim \sum_{k=1}^{\infty }\sum_{l=1}^{\infty }%
\frac{(k^{2}+l^{2})^{s}l^{4}}{\left( (\eps l^{2}+\sigma )(k^{2}+l^{2})+(1-%
\eps )l^{4}\right) ^{2}}f_{kl}^{2}\leq \sum_{k=1}^{\infty
}\sum_{l=1}^{\infty }(k^{2}+l^{2})^{s}f_{kl}^{2}\sim |f|_{H^{s}}^{2}\,.
\end{equation*}%
Moreover, in the case $\sigma =0$ one has
\begin{equation*}
|\partial _{yy}u^{\eps }|_{H^{s}}^{2}\sim \sum_{k=1}^{\infty
}\sum_{l=1}^{\infty }\frac{(k^{2}+l^{2})^{s}l^{4}}{\left( k^{2}+\frac{l^{2}}{%
\eps }\right) ^{2}}f_{kl}^{2}\leq \eps ^{2}\sum_{k=1}^{\infty
}\sum_{l=0}^{\infty }(k^{2}+l^{2})^{s}f_{kl}^{2}\sim \eps ^{2}|f|_{H^{s}}^{2}\,.
\end{equation*}
In conclusion, if $f\in H^{s}(\Omega)$ then $u^{\eps ,\sigma }\in H^{s+2}(\Omega)$, $\xi^{\eps %
,\sigma }\in H^{s}(\Omega)$ and $\partial _{yy}u^{\eps }\in H^{s}(\Omega)$ and there is a
constant $C>0$ independent of $\eps $ and $\sigma $ such that%
\begin{equation*}
|u^{\eps ,\sigma }|_{H^{s+2}}\leq C|f|_{H^{s}},\quad |\xi^{\eps ,\sigma
}|_{H^{s}}\leq C|f|_{H^{s}}\quad \text{and\quad }|\partial
_{yy}u^{\eps }|_{H^{s}}\leq C\, \eps\,|f|_{H^{s}}\,.
\end{equation*}%
The same estimates hold true in the inflow case, problem (\ref{Pa}), {\it i.e.} when $q^{\eps }$ is
associated with zero boundary conditions on the inflow part.
Indeed, the link between $q^{\eps }$ and $\xi^{\eps,0}$  can be explicited as%
\begin{equation*}
q^{\eps }(x,y)=\xi^{\eps ,0}(x,y)-\xi^{\eps ,0}(x,0).
\end{equation*}%
Hence, it suffices to study the regularity of the trace function $\chi ^{\eps
}(x):=\xi^{\eps ,0}(x,0)$. We have%
\begin{equation*}
\chi ^{\eps }(x)=\sum_{k=1}^{\infty }\sum_{l=1}^{\infty }\frac{l^{2}}{\eps %
l^{2}(k^{2}+l^{2})+(1-\eps )l^{4}}f_{kl}\sin kx\,,
\end{equation*}%
implying
\begin{eqnarray*}
|\chi ^{\eps }|_{H^{s}(\Gamma_{in})}^{2} &= &\sum_{k=1}^{\infty }k^{2s}\left(
\sum_{l=1}^{\infty }\frac{l^{2}}{\eps l^{2}(k^{2}+l^{2})+(1-\eps )l^{4}}%
f_{kl}\right) ^{2}\leq \sum_{k=1}^{\infty }k^{2s}\left( \sum_{l=1}^{\infty }%
\frac{f_{kl}}{l^{2}}\right) ^{2} \\
&\leq &\sum_{k=1}^{\infty }k^{2s}\left( \sum_{l=1}^{\infty }\frac{1}{l^{4}}%
\right) \left( \sum_{l=1}^{\infty }f_{kl}^{2}\right) \leq
C\sum_{k=1}^{\infty }\sum_{l=1}^{\infty }(k^{2}+l^{2})^{s}f_{kl}^{2}\sim
|f|_{H^{s}}^{2}\,.
\end{eqnarray*}%
We conclude thus $\chi ^{\eps }\in H^{s}(\Gamma_{in})$ so that $q^{\eps }\in H^{s}(\Omega) $ with
the $\eps -$independent estimate $|q^{\eps
}|_{H^{s}}\leq C|f|_{H^{s}}$.

\section{On the discrete inf-sup condition}

As mentioned earlier, the numerical analysis in this paper would be more
convenient, if the discrete inf-sup condition (\ref{infsupfake}) were true, i.e. 
\begin{equation}
\inf_{q_{h}\in {L}_{h}}\sup_{v_{h}\in {V}_{h}}\frac{a_{\paral}(q_{h},v_{h})}{%
|q_{h}|_*|v_{h}|_{\mathcal{V}}}\geq \alpha\,,  \label{estalp1}
\end{equation}%
with a mesh independent $\alpha >0$. In more explicit form, this means%
\begin{equation}
\forall q_{h}\in {L}_{h}:\sup_{v_{h}\in {V}_{h}}\frac{a_{\paral}(q_{h},v_{h})}{%
|v_{h}|_{\mathcal{V}}}\geq \alpha \, \sup_{v\in\mathcal{V}}\frac{a_{\paral}(q_{h},v)}{|v|_{\mathcal{V}}}\,.
\label{defuh0}
\end{equation}

We show first that (\ref{estalp1}) holds true in a simple aligned geometry. Secondly, we provide a numerical study of a non-aligned case where (\ref{estalp1}) turns out to be false.

\subsection{The case of the aligned geometry}

Assume $\Omega = (0, L_x) \times (0, L_y)$ and $b = e_2$. Choose 
$V_h$ as the finite element space $Q_k$ ($k \geq 1$) on
a rectangular grid $\mathcal{T}_h$ aligned with the coordinate axes. More
precisely, we choose some node points $0 = x_0 < x_1 < \cdots < x_{N_x} =
L_x$ on $[0, L_x]$ and $0 = y_0 < y_1 < \cdots < y_{N_y} = L_y$ on $[0,
L_y]$ with all the steps of order $h$ and introduce $\mathcal{T}_h$ as the collection of rectangles $[ x_i,
x_{i + 1}] \times [ y_j, y_{j + 1}]$ that constitutes a partition of
$\Omega$. Moreover, $\mathcal{E}_h$ shall denote the set of all the edges of
the mesh $\mathcal{T}_h$ with the exception of those lying on $\Gamma_D$.

Our strategy to prove (\ref{estalp1}) is to use Verf{\"u}rth's trick  \cite{verfurth84} by first
establishing the inf-sup conditions with respect to an auxiliary mesh
dependent norm and then going to the original norm with the aid of Cl{\'e}ment
interpolation. Thus, we want to prove first that for all $q_h \in L_h$ there
exists $v_h \in V_h$ such that
\begin{equation}
  N_h ( q_h) : = \left( \frac{1}{h}  \sum_{E \in \mathcal{E}_h} \int_E |
  [\partial_y q_h] |^2 ds + \sum_{K \in \mathcal{T}_h} \int_K | \partial_{yy}
  q_h |^2 dx \right)^{\frac{1}{2}} \leq C \frac{a_{||} (q_h, v_h)}{|| v_h
  ||_{L^2}} \label{qhh1D}
\end{equation}
where $[\partial_y q_h]$ denotes the jump of $\partial_y q_h$ across an edge
$E \in \mathcal{E}_h$ if the edge is internal, and $[\partial_y
q_h]$=$\partial_y q_h$ on an edge $E$ lying on the boundary $\Gamma$.

A convenient reformulation of this is: For all $q_h \in L_h$ there exist $v_h
\in V_h$ such that
\begin{equation}
  || v_h ||_{L^2}^2  \leq  C_1 N_h^2 ( q_h)
  \text{ and }
  a_{||} (q_h, v_h)  \geq  C_2 N_h^2 ( q_h). 
\label{neq1}
\end{equation}
The quantities above can be written in a more explicit manner as
\begin{eqnarray*}
  &  & \\
  N_h ^2 ( q_h) & = & \frac{1}{h}  \sum_{i = 0}^{N_y} \int_0^{L_x} |
  [\partial_y q_h] |^2 (y_i) dx + \sum_{i = 0}^{N_y} \int_0^{L_x} \int_{y_{i -
  1}}^{y_i} | \partial_{yy} q_h |^2 dydx \hspace{0.25em},\\
  || v_h ||_{L^2}^2 & = & \int_0^{L_x} \int_0^{L_y} v_h^2 dydx
  \hspace{0.25em},\\
  a_{||} (q_h, v_h) & = & \sum_{i = 0}^{N_y} \int_0^{L_x} [\partial_y q_h]
  (x, y_i) v_h (x, y_i) dx - \sum_{i = 1}^{N_y} \int_0^{L_x} \int_{y_{i -
  1}}^{y_i} \partial_{yy} q_h v_h dydx.
\end{eqnarray*}

The construction of $v_h$ is particularly easy in the case of bilinear finite
elements ($k = 1$): we can take $v_h \in V_h$ such that for all $i = 0,
\ldots, N_y$
\begin{equation}
  v_h (x, y_i) = \frac{1}{h}  [\partial_y q_h] (x, y_i) \hspace{0.25em} .
  \label{vhconst}
\end{equation}
Then the second inequality in (\ref{neq1}) becomes equality with $C_2 = 1$.
Moreover, one easily gets by a scaling argument
\begin{equation}
  \int_0^{L_y} v_h^2 (x, y) dy \leq C_1 h \sum_{i = 0}^{N_y} v_h^2 (x, y_i)
  \label{neqsca}
\end{equation}
for all $x \in  [0, L_x]$ which gives the first inequality in (\ref{neq1}).

We describe now a more complicated construction in the case $k \geq 2$. For a
given $q_h \in L_h$ we construct $v_h \in V_h$ as $v_h^{} = v_h^{(1)} +
v_h^{(2)}$ where $v_h^{(1)} \in V_h $ is defined for any $x \in [ 0, L_x]$ and
any $y \in$[$y_{i - 1}, y_i]$, $i = 1, \ldots, N_y$ by
\begin{eqnarray*}
  &  & v_{h}^{(1)} (x, y) = - \partial_{yy} q_h (x, y) \frac{(y -
  y_{i - 1})  (y_i - y)}{h^2}  \text{}\\
  &  & \hspace{0.25em} .
\end{eqnarray*}
and $v_h^{(2)} \in V_h$ is such that
\[ v_h^{(2)} (x, y_i) = \frac{1}{h}  [\partial_y q_h] (x, y_i) ,
   \quad i = 0, \ldots, N_x, \quad x \in [ 0, L_x] \]
and $v_h^{(2)} |_{y \in [ y_{i - 1}, y_i]} $ for any $x$ fixed is a
polynomial of degree $k$ that is orthogonal in $L^2 ( y_{i - 1}, y_i)$ to all
the polynomials of degree $\leqslant k - 2$. We get for this $v_h$
\[ \begin{array}{ll}
     a_{||} (q_h, v_h) & =
   \end{array} \frac{1}{h}  \sum_{i = 0}^{N_y} \int_0^{L_x} | [\partial_y q_h]
   |^2 (y_i) dx + \sum_{i = 0}^{N_y} \int_0^{L_x} \int_{y_{i - 1}}^{y_i} |
   \partial_{yy} q_h |^2  \frac{(y - y_{i - 1})  (y_i - y)}{h^2} dydx
   \hspace{0.25em} \]
This yields immediately the second inequality in (\ref{neq1}) \ with some $C_2
> 0$. In order to prove the first inequality in (\ref{neq1}) we employ again a
scaling inequality of type (\ref{neqsca}): for any $x \in [ 0, L_x]$
\[ \int_0^{L_y} v_h^2 dy \leq 2 \int_0^{L_y} \hspace{-0.25em} \hspace{-0.25em}
   |v_h^{(1)} |^2 dy + 2 \int_0^{L_y} \hspace{-0.25em} \hspace{-0.25em}
   |v_h^{(2)} |^2 dy \leq C_1  \left( \sum_{i = 1}^{N_y} \int_{y_{i -
   1}}^{y_i} \hspace{-0.25em} \hspace{-0.25em} | \partial_{yy} q_h |^2 dy +
   \frac{1}{h}  \sum_{i = 0}^{N_y} [\partial_y q_h]^2 (y_i) \right)
   \hspace{0.25em} . \]

Now, (\ref{qhh1D}) being established, \ take any $q_h \in L_h$. By the
definition of the norm $|\cdot|_*$, there exists $v \in V$ such
that $|v|_{\mathcal{V}} = |q_h |_{\ast}$ and $a_{||} (q_h, v) = |q_h
|_{\ast}^2$. Let $\tilde{v}_h \in V_h$ be the Cl{\'e}ment interpolant of $v$
such that \cite{ern}
\[ \|v - \tilde{v}_h \|_{L^2 (\Omega)} \leq Ch |v|_{\mathcal{V}},
   \quad | \tilde{v}_h |_{\mathcal{V}} \leq C |v|_{\mathcal{V}} ,
   \quad \left( \sum_{E \in
   \mathcal{E}_h} \|v - \tilde{v}_h \|^2_{L^2 (E)} \right)^{\frac{1}{2}} \leq
   C \sqrt{h} |v|_{\mathcal{V}} . \]
Observe that
\begin{eqnarray*}
  |q_h |_{\ast}^2 & = & a_{||} (q_h, v) = a_{||}  (q_h, v - \tilde{v}_h) +
  a_{||} (q_h, \tilde{v}_h)\\
  & \leq & a_{||}  (q_h, v - \tilde{v}_h) + | \tilde{v}_h |_{\mathcal{V}}
  \sup_{v_h \in V_h}  \frac{a_{||} (q_h, v_h)}{|v_h |_{\mathcal{V}}} \leq
  a_{||}  (q_h, v - \tilde{v}_h) + C |q_h |_{\ast} \sup_{v_h \in V_h} 
  \frac{a_{||} (q_h, v_h)}{|v_h |_{\mathcal{V}}} \hspace{0.25em} .
\end{eqnarray*}
Integrating by parts element by element in the first term of the last line
yields
\begin{eqnarray*}
  a_{||}  (q_h, v - \tilde{v}_h) & = & \sum_{K \in \mathcal{T}_h} \int_K
  \nabla_{||} q_h \cdot \nabla_{||}  (v - \tilde{v}_h) dx\\
  & = & \sum_{E \in \mathcal{E}_h} \int_E [n_{||} \cdot \nabla_{||} q_h]  (v
  - \tilde{v}_h) ds - \sum_{K \in \mathcal{T}_h} \int_K (\nabla_{||} \cdot
  \nabla_{||} q_h)  (v - \tilde{v}_h) dx\\
  & \leq & N_h ( q_h) \leq  Ch |v|_V \sup_{v_h \in V_h}  \frac{a_{||} (q_h, v_h)}{|| v_h
  ||_{L^2}} \leq C |v|_{\mathcal{V}} \sup_{v_h \in V_h}  \frac{a_{||} (q_h,
  v_h)}{|v_h |_{\mathcal{V}}} .
\end{eqnarray*}
We have used here (\ref{qhh1D}) and the standard inverse inequality. This
enables us to conclude
\[ |q_h |_{\ast}^2 \leq C |v|_{\mathcal{V}} \sup_{v_h \in V_h}  \frac{a_{||}
   (q_h, v_h)}{|v_h |_{\mathcal{V}}} + C |q_h |_{\ast} \sup_{v_h \in V_h} 
   \frac{a_{||} (q_h, v_h)}{|v_h |_{\mathcal{V}}} \leq C |q_h |_{\ast}
   \sup_{v_h \in V_h}  \frac{a_{||} (q_h, v_h)}{|v_h |_{\mathcal{V}}}
   \hspace{0.25em}, \]
since $q_h \in V_h$ and $|v|_{\mathcal{V}} = |q_h |_{\ast}$. The last
inequality gives the desired result (\ref{estalp1}).



\subsection{A numerical study in the case of a general geometry}

The aim of this section is to investigate the validity of \eqref{estalp1} or equivalently \eqref{defuh0} in a more general context by a series of numerical experiments. As in Appendix B.1, we shall assume that $\Omega =(0,L_{x})\times (0,L_{y})$, $%
b=e_{2}$, however this time the grid is no more aligned with the field lines of $b$. Indeed, we are using here a regular  grid made of triangles such that their hypotenuses are no longer aligned with $b$. Numerical simulations are performed with FreeFem++ \cite{Hecht}.

Let $V_h$ be the ${\mathcal P}_1$ finite element space on a mesh described above of size $h>0$.
Observe that the first supremum in \eqref{defuh0} is attained on $v_{h}^{\ast }\in V_{h}$ that
satisfies%
\begin{equation}
a(v_{h}^{\ast },w_{h})=a_{\paral}(q_{h},w_{h})\,,\quad \forall w_{h}\in V_{h}\,.
\label{estalp2}
\end{equation}%
In order to explore the second supremum in \eqref{defuh0}, we use the finer finite
element space $V_{h/2}^{f}$, constructed via ${\mathcal P}_2$ finite elements on mesh of size $h/2$, i.e. a two-times refinement of the mesh above. The goal in introducing this finer space 
$V_{h/2}^{f}$ is to approximate the infinite-dimensional space in \eqref{defuh0}.

Consider $v_{h/2}^{\ast f}\in V_{h/2}^{f}$ that
satisfies%
\begin{equation}
a(v_{h/2}^{\ast f},w_{h/2}^{f})=a_{\paral}(q_{h/2},w_{h/2}^{f})\,,\quad \forall
w_{h/2}^{f}\in V_{h/2}^{f}\,.
\end{equation}%
If (\ref{defuh0}) holds true, than we have 
\begin{equation*}
\forall q_{h}\in {L}_{h}:\frac{\left|v_{h}^{\ast }\right|_{\mathcal{V}}}{\left|v_{h/2}^{\ast f}\right|_{\mathcal{V}}}%
\geq \alpha\,.
\end{equation*}
Unfortunately, this is false as shown in the following numerical experiment. Let $L_x=L_y=1$ and let us choose on each
mesh of size $h=1/n$ the function $q_{h}\in V_h$ defined by its values at the mess nodes as
\begin{equation}\label{qexample}
q_h(x_i,y_j)=x_i\sin (\pi n\, y_j/2)\,,
\end{equation}%
where $x_i=ih$, $y_j=jh$, $i,j=0,\ldots,n$. 
Note that this function satisfies all the boundary conditions provided $n$
is even. In Fig. \ref{fig:noinfsup} we plot the quantity $\frac{|v_{h}^{\ast }|_{\mathcal{V}}%
}{|v_{h/2}^{\ast f}|_{\mathcal{V}}}$ computed for such a $q_{h}$ on a series of meshes
versus $h=\frac{1}{n}$.
\begin{figure}
\centerline{
\includegraphics[scale=0.3]{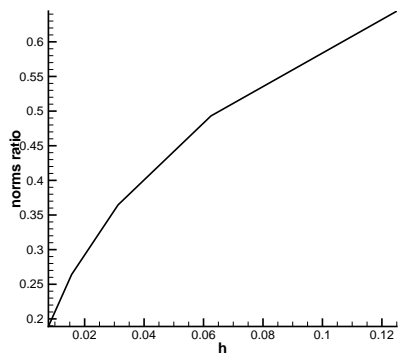} 
}
  \caption{The ratio $\frac{|v_{h}^{\ast }|_{\mathcal{V}}}{|v_{h}^{\ast f}|_{\mathcal{V}}}$ computed for $q_h$ given by (\ref{qexample}).}
  \label{fig:noinfsup}
\end{figure}
It shows clearly that the constant $\alpha $ in (\ref{estalp1}) is mesh
dependent, i.e. it tends to 0 (in general) when the mesh size tends to 0.

\section*{Acknowledgments}

This work has been supported by the ANR project MOONRISE (MOdels, Oscillations and NumeRIcal SchEmes, 2015-2019). This work has been carried out within the framework of the EUROfusion Consortium and has received funding from the Euratom research and training programme 2014-2018 under grant agreement No 633053. The views and opinions expressed herein do not necessarily reflect those of the European Commission.

\bibliographystyle{abbrv}
\bibliography{bib_aniso}

\end{document}